\def\Cal{\mathcal}
\def\B{{\Cal B}}
\def\M{{\Cal M}}
\def\D{{\Cal D}}
\def\S{{\Cal S}}
\def\K{{\Cal K}}
\def\I{{\Cal I}}
\def\Z{\mathcal{Z}}
\def\bbr{{\Bbb R}}
\def\bbn{{\Bbb N}}
\def\bbc{{\Bbb C}}
\def\supp{{\hbox{\rm supp}}}
\def\cl{{\hbox{\rm cl}}}
\def\cos{{\hbox{\rm cos}}}
\def\vol{{\hbox{\rm vol}}}
\def\min{{\hbox{\rm min}}}
\def\rl{\bbr^\ell}
\def\rn{\bbr^n}
\def\rnl{\bbr^{n-\ell}}
\def\part{\partial}
\def\intl{\int\limits}
\def\b{\beta}
\def\lng{\langle}
\def\rng{\rangle}
\def\Gam{\Gamma}
\def\Om{\Omega}
\def\a{\alpha}
\def\om{\omega}
\def\Del{\Delta}
\def\del{\delta}
\def\vp{\varphi}
\def\gam{\gamma}
\def\Lam{\Lambda}
\def\sig{\sigma}
\def\lam{\lambda}
\def\e{\varepsilon}
\newtheorem{theorem}{Theorem}[section]
\newtheorem{lemma}[theorem]{Lemma}
\newtheorem{definition}[theorem]{Definition}
\newtheorem{corollary}[theorem]{Corollary}
\newtheorem{proposition}[theorem]{Proposition}
\theoremstyle{remark}
\newtheorem{remark}[theorem]{Remark}
\newtheorem{example}[theorem]{Example}
\numberwithin{equation}{section}
\newcommand{\be}{\begin{equation}}
\newcommand{\ee}{\end{equation}}
\newcommand{\bea}{\begin{eqnarray}}
\newcommand{\eea}{\end{eqnarray}}
\newcommand{\Bea}{\begin{eqnarray*}}
\newcommand{\Eea}{\end{eqnarray*}}
\begin{document}

\title[Intersection Bodies ]
{Intersection Bodies and Generalized Cosine Transforms}

\author{Boris Rubin}
\address{
Department of Mathematics, Louisiana State University, Baton Rouge,
LA, 70803 USA}

\email{borisr@math.lsu.edu}

\thanks{The  research was supported in part by the  NSF grant DMS-0556157
and the Louisiana EPSCoR program, sponsored  by NSF and the Board of
Regents Support Fund.}

\subjclass[2000]{Primary 44A12; Secondary 52A38}



\keywords{ Spherical Radon transforms, cosine transforms,
intersection bodies}

\begin{abstract}

Intersection bodies represent a remarkable class of geometric
objects associated with sections of star bodies and invoking
 Radon transforms,  generalized cosine
 transforms, and the relevant Fourier analysis.  The main focus of this article
 is interrelation between
    generalized cosine transforms of different kinds
 in the context of their application to investigation of a certain family of
 intersection bodies, which we call $\lam$-intersection bodies. The
 latter include  $k$-intersection bodies (in the sense of A. Koldobsky)
  and unit balls of finite-dimensional subspaces of
 $L_p$-spaces. In particular, we show that restrictions onto lower
 dimensional subspaces of the spherical Radon
 transforms and the generalized cosine transforms  preserve their integral-geometric structure. We apply this result to the
 study of sections of $\lam$-intersection bodies. New characterizations of
 this class of bodies are obtained and examples  are given.
 We also  review some known facts and give them new proofs.
\end{abstract}

\maketitle

\centerline{Contents} \centerline{} 1. Introduction.

2. Preliminaries.

3. Analytic families of the generalized cosine transforms.

4. Positive definite homogeneous distributions.

5. $\lam$-intersection bodies.

6. Examples of $\lam$-intersection bodies.

7. $(q,\ell)$-balls.

8. The generalized cosine transforms and comparison of volumes.

9. Appendix.

\section{Introduction}

\setcounter{equation}{0}

This is an updated and extended version of our previous preprint
\cite {R5}.

Intersection bodies interact with Radon transforms and encompass
diverse classes of geometric objects associated to sections of star
bodies. The concept of intersection body was introduced in the
remarkable paper by Lutwak \cite{Lu} and led to a breakthrough in
the solution of the long-standing Busemann-Petty problem;  see
\cite{G},  \cite{K3}, \cite{Lu}, \cite {Z2} for references and
historical notes.

We remind some known facts that will be needed in the following. An
origin-symmetric (o.s.) star body in $\bbr^n$, $ n \ge 2$, is  a
compact set $K$ with non-empty interior such that $tK \subset K \;
\forall t\in [0,1]$, $K=-K$, and the {\it radial function} $ \rho_K
(\theta) = \sup \{ \lambda \ge 0: \, \lambda \theta \in K \}$ is
continuous on the unit sphere $S^{n-1}$.
 In the following, $\K^n$ denotes
 the set of all o.s. star  bodies  in $\bbr^n$, $G_{n,i}$
 is the Grassmann manifold of $i$-dimensional linear
subspaces  of $\bbr^n$, and  $\vol_i ( \cdot)$ denotes the
$i$-dimensional volume function. The {\it Minkowski functional}  of
a body $K \in \K^n$  is defined by $ ||x||_K =\min \{a \ge 0 \, : \,
x \in aK\}$, so that $||\theta||_K=\rho_K^{-1}(\theta)$, $\theta \in
S^{n-1}$.
\begin{definition}\label{luu1} \cite{Lu} A body $K\in \K^n $ is an intersection
body of a body $L\in \K^n $ if $ \rho_K (\theta) =\vol_{n-1}(L\cap
\theta^\perp)$ for every $\theta \in S^{n-1}$, where $\theta^\perp$
is the central hyperplane orthogonal to $\theta$. \end{definition}

By taking into account that $\vol_{n-1}(L\cap \theta^\perp)$ in
Definition \ref{luu1} is a constant multiple of the Minkowski-Funk
transform $$(M f)(\theta)=\int_{S^{n-1}\cap \theta^\perp}
f(u)\,d_\theta u,\qquad f(u)= \rho_L^{n-1} (u),$$ Goodey, Lutwak and
Weil \cite {GLW} generalized Definition \ref{luu1}  as follows.
\begin{definition}\label{luu2}  A body $K\in \K^n $ is an
intersection body  if $ \rho_K  =M \mu$ for some
 even non-negative finite Borel measure $\mu$ on $S^{n-1}$.
\end{definition}

A sequence of bodies $K_j\in \K^n$ is said to be convergent to $K
\in \K^n$ in the radial metric if $\lim\limits_{j \to \infty}||
\rho_{K_j} - \rho_K||_{C(S^{n-1})}=0$.
\begin{proposition}\label{luu3}  The class of intersection bodies is the closure of the class of intersection bodies of
star bodies in the radial metric.
\end{proposition}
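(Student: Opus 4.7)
The plan is to verify three claims which together give the equality. First, every intersection body of a star body lies in the (a priori larger) class of Definition \ref{luu2}: polar integration on $\theta^\perp$ yields
\[
\vol_{n-1}(L\cap \theta^\perp)=\frac{1}{n-1}(M\rho_L^{n-1})(\theta),
\]
so taking $d\mu=(n-1)^{-1}\rho_L^{n-1}\,du$ exhibits the intersection body of $L$ as $M\mu$ for an even non-negative continuous density. The remaining work is to show that the class $\I$ of intersection bodies (Definition \ref{luu2}) is closed in the radial metric, and that every $K\in\I$ is a radial-metric limit of intersection bodies of star bodies.

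For closedness, suppose $\rho_{K_j}=M\mu_j\to\rho_K$ uniformly. I would first bound the total masses using the self-adjointness of $M$ and $M\mathbf{1}=|S^{n-2}|\mathbf{1}$:
\[
|S^{n-2}|\,\|\mu_j\|=\int_{S^{n-1}} M\mu_j=\int_{S^{n-1}}\rho_{K_j},
\]
which is uniformly bounded. By Banach--Alaoglu pass to a weak-$\ast$ limit $\mu_{j_k}\to\mu$; $\mu$ inherits evenness and non-negativity. For $\phi\in C(S^{n-1})$ the duality $\langle M\nu,\phi\rangle=\langle\nu,M\phi\rangle$ gives $\langle M\mu_{j_k},\phi\rangle\to\langle \mu,M\phi\rangle=\langle M\mu,\phi\rangle$, and comparison with the uniform limit forces $\rho_K=M\mu$, so $K\in\I$.

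For the approximation step, let $K\in\I$ with $\rho_K=M\mu$. Choose an approximate identity consisting of even zonal functions $\chi_j\in C(S^{n-1})$, $\chi_j\ge0$, $\int\chi_j=1$, concentrating at the north-south pair. Set
\[
f_j:=\mu*\chi_j+\e_j,\qquad \e_j\downarrow 0,
\]
the spherical convolution being defined by $(\mu*\chi_j)(u)=\int\chi_j(u\cdot v)\,d\mu(v)$. Then $f_j$ is continuous, even and strictly positive, so $f_j=(n-1)\rho_{L_j}^{n-1}$ for a unique $L_j\in\K^n$, whose intersection body $K_j$ satisfies $\rho_{K_j}=Mf_j$. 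Since both $M$ and spherical convolution commute with the action of $SO(n)$, they commute with each other, yielding $M(\mu*\chi_j)=(M\mu)*\chi_j=\rho_K*\chi_j$. Therefore
\[
\rho_{K_j}=\rho_K*\chi_j+\e_j|S^{n-2}|,
\]
and continuity of $\rho_K$ on the compact sphere gives $\rho_{K_j}\to\rho_K$ uniformly, i.e.\ $K_j\to K$ in the radial metric.

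The main technical point is the commutation $M(\mu*\chi_j)=(M\mu)*\chi_j$ for a measure $\mu$. I would verify it by first checking it for $\mu$ a continuous density via Fubini, then extending to measures by weak-$\ast$ density together with the fact that $\mu*\chi_j$ is continuous (dominated convergence). Everything else reduces to standard facts about spherical convolution and the weak-$\ast$ compactness of bounded sets of measures.
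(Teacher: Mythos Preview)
Your argument is correct. The three ingredients---inclusion of intersection bodies of star bodies in $\I$, closedness of $\I$ via Banach--Alaoglu, and approximation by mollifying the measure---combine cleanly, and the commutation $M(\mu*\chi_j)=(M\mu)*\chi_j$ is indeed just the fact that both operators are $SO(n)$--equivariant and hence act as scalars on each irreducible harmonic subspace (your Fubini argument for densities gives this as well). One cosmetic slip: with the normalization $\vol_{n-1}(L\cap\theta^\perp)=\tfrac{1}{n-1}M\rho_L^{n-1}$, you want $\rho_{L_j}^{n-1}=(n-1)f_j$ rather than $f_j=(n-1)\rho_{L_j}^{n-1}$, but this does not affect the argument.

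The paper obtains Proposition~\ref{luu3} as the case $\lam=1$ of Theorem~\ref{tcl}, and the method there differs from yours in two respects. For the approximation step the paper uses the Poisson integral $\Pi_t$ as a specific smoothing (so $\rho_{K_j}^\lam=\Pi_{t_j}\rho_K^\lam$ and $\rho_{L_j}^{n-\lam}=c\,\Pi_{t_j}\mu$); this is essentially your mollification with a particular zonal kernel. The more substantive difference is in the closedness direction: instead of extracting a weak-$*$ limit of the measures $\mu_j$, the paper works with the \emph{inverse} operator and shows directly that $M^{1-n+\lam}\rho_K^\lam$ is a positive distribution (by pairing with $\om$ and passing to the limit through $M^{1-\lam}M^{1-n+\lam}=\text{id}$), hence a nonnegative measure by Theorem~\ref{sm}; for $\lam=1$ this is $M^{2-n}\rho_K$ via Lemma~\ref{l1}. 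Your compactness argument is more elementary and avoids the analytic family $\{M^\a\}$ and the inversion formula~(\ref{st}), but it does not identify the limiting measure explicitly. The paper's route, in contrast, produces $\mu$ constructively as $M^{1+\lam-n}\rho_K^\lam$ and carries over uniformly to all admissible $\lam<n$, which is the point of embedding this statement in Theorem~\ref{tcl}.
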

\begin{proposition} \label{luu4} If $K$ is an
intersection body in $\bbr^n$, $n>2$, then for every $i=2,3, \ldots
, n-1$ and every $\eta \in G_{n,i}$, $K\cap \eta$ is an intersection
body in $\eta$. \end{proposition}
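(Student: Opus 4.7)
The plan is to prove the statement first for $K$ equal to the intersection body of a concrete star body (Definition \ref{luu1}), then extend to general intersection bodies via Proposition \ref{luu3}.

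\emph{Step 1.} Assume $K$ is the intersection body of some $L \in \K^n$, so $\rho_K(\theta) = \vol_{n-1}(L\cap\theta^\perp)$, and fix $\theta \in \eta\cap S^{n-1}$. Since $\theta\in\eta$, $\rho_{K\cap\eta}(\theta)=\rho_K(\theta)$. I will exhibit a star body $M\in \K^i$ inside $\eta \cong \bbr^i$ such that $\rho_{K\cap\eta}(\theta) = \vol_{i-1}(M\cap\theta_\eta^\perp)$, where $\theta_\eta^\perp$ denotes the orthogonal complement of $\theta$ inside $\eta$.

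\emph{Step 2 (bi-spherical decomposition).} Split $\theta^\perp = (\theta^\perp\cap\eta) \oplus \eta^\perp$ into subspaces of dimensions $i-1$ and $n-i$. Every $u\in \theta^\perp\cap S^{n-1}$ can be written as $u=(\cos\phi)\,v+(\sin\phi)\,w$ with $v\in \theta^\perp\cap\eta\cap S^{n-1}$, $w\in\eta^\perp\cap S^{n-1}$, $\phi\in [0,\pi/2]$, and the area element factors as
\[
d\sigma_{n-2}(u) = (\cos\phi)^{i-2}(\sin\phi)^{n-i-1}\, d\phi\, d\sigma_{i-2}(v)\, d\sigma_{n-i-1}(w).
\]
Feeding this into the polar formula $\vol_{n-1}(L\cap\theta^\perp) = \frac{1}{n-1}\int_{\theta^\perp\cap S^{n-1}} \rho_L(u)^{n-1}\, d\sigma_{n-2}(u)$ and applying Fubini gives
\[
\rho_{K\cap\eta}(\theta) = \int_{\theta^\perp\cap\eta\cap S^{n-1}} H(v)\, d\sigma_{i-2}(v),
\]
where, for $v\in \eta\cap S^{n-1}$,
\[
H(v) := \frac{1}{n-1}\int_0^{\pi/2}\!\int_{\eta^\perp\cap S^{n-1}} \rho_L((\cos\phi)v+(\sin\phi)w)^{n-1}(\cos\phi)^{i-2}(\sin\phi)^{n-i-1}\, d\phi\, d\sigma_{n-i-1}(w).
\]
Continuity and strict positivity of $\rho_L$ on $S^{n-1}$, together with $L=-L$, make $H$ continuous, strictly positive, and even on $\eta\cap S^{n-1}$. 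Setting $\rho_M(v) := ((i-1)H(v))^{1/(i-1)}$ defines a valid o.s.\ star body $M\in \K^i$ in $\eta$, and the polar formula $\vol_{i-1}(M\cap\theta_\eta^\perp) = \frac{1}{i-1}\int_{\theta^\perp\cap\eta\cap S^{n-1}} \rho_M(v)^{i-1}\, d\sigma_{i-2}(v)$ then yields $\vol_{i-1}(M\cap\theta_\eta^\perp) = \rho_{K\cap\eta}(\theta)$. Thus $K\cap\eta$ is literally the intersection body of $M$ inside $\eta$, and in particular an intersection body in $\eta$.

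\emph{Step 3 (approximation).} For a general intersection body $K$, Proposition \ref{luu3} supplies intersection bodies $K_j$ of star bodies with $K_j\to K$ in the radial metric of $\bbr^n$. Restriction to $\eta\cap S^{n-1}$ is continuous in the sup-norm, so $\rho_{K_j\cap\eta}\to \rho_{K\cap\eta}$ uniformly, i.e.\ $K_j\cap\eta\to K\cap\eta$ in the radial metric of $\eta$. By Step 2, each $K_j\cap\eta$ is an intersection body in $\eta$, and by Proposition \ref{luu3} applied inside $\eta$ the class of intersection bodies in $\eta$ is closed in the radial metric. Hence $K\cap\eta$ is an intersection body in $\eta$.

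The key technical point is the bi-spherical Fubini identity in Step 2 (verifying the factorization of the area element and extracting the radial function of $M$); everything else is either the definition or a soft limiting argument.
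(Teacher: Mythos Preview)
Your proof is correct and follows essentially the same approach as the paper. The paper does not prove Proposition \ref{luu4} directly (it is cited from \cite{FGW}, \cite{GW}), but its generalization, Theorem \ref{tyif}, rests on precisely your bi-spherical decomposition (compare your Step 2 with Theorem \ref{restrf}; your function $H$ is, up to constants, the paper's operator $T_\eta^1$ applied to $\rho_L^{n-1}$). The only organizational difference is in the approximation step: you pass from intersection bodies of star bodies to general intersection bodies via Proposition \ref{luu3}, whereas the paper (Theorem \ref{restrmi}) approximates the measure $\mu$ in $\rho_K = M\mu$ by its Poisson integrals $\Pi_t\mu$ and takes a weak limit --- both routes are valid, and yours is the more direct one for the $\lam=1$ case.
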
 Regarding these two important
propositions see \cite {FGW}, \cite {GW} and a nice historical
survey in \cite {G}.

Different generalizations of the concept of intersection body
associated to lower dimensional sections were suggested in the
literature; see, e.g., \cite {K3}, \cite {RZ}, \cite {Z1}. The
following one, which  plays an important role in the study of the
lower dimensional  Busemann-Petty problem, is due to Zhang
\cite{Z1}.
\begin{definition}\label{zyy1} We say, that a body $K\in \K^n$
belongs to Zhang's class $\Z^n_{i}$ if there is a non-negative
finite Borel measure $m$ on the Grassmann manifold $G_{n,i}$ such
that $\rho_K^{n-i}=R^*_i m$, where $R^*_i$ is the dual spherical
Radon transform; see {\rm (\ref{rts}), (\ref{fina})}.
\end{definition}

Another generalization was suggested by Koldobsky \cite{K2} and
described in detail in  \cite{K3}. This class of bodies will be our
main concern.
\begin{definition}\label{kyy1}\cite[p. 71]{K3} A body $K\in
\K^n$ is a $k$-intersection body of a body $L\in \K^n$ $($we write
$K=\I\B_k (L))$
 if \be\label{divve} \vol_{k} (K\cap
\xi)=\vol_{n-k} (L\cap \xi^\perp)\qquad \forall \xi \in G_{n,k}.\ee
 We denote by   $\I\B_{k,n}$  the set of all bodies $K\in \K^n$ satisfying
(\ref{divve}) for some $L\in \K^n$.
\end{definition}
When $k=1$, this definition coincides with Definition \ref{luu1} up
to a constant multiple. An analog of Definition \ref{luu2} was given
 in the Fourier analytic terms as follows.
\begin{definition}\label{def45} \cite[Definition 4.7]{K3} A body $K\in
\K^n$  is a $k$-intersection body if  there is a non-negative finite
Borel measure $\mu$ on $S^{n-1}$, so that for every Schwartz
function $\phi$,
\[\int_{\bbr^n}||x||_K^{-k} \phi (x)\, dx=\int_{S^{n-1}}\Big
[\int_0^\infty t^{k-1}\hat \phi (t\theta) \,dt \Big ] \,d\mu
(\theta),\] where $\hat \phi$ denotes the Fourier transform of
$\phi$.

The set of all $k$-intersection bodies in $\bbr^n$ will be denoted
by $\I^n_k$.
\end{definition}

Keeping in mind  Proposition \ref{luu3} for $k=1$,  one
 can alternatively define the class $\I^n_k$  as a closure
of $\I\B_{k,n}$  in the radial metric; cf. \cite[p. 532]{Mi1}.
However,  to apply results from \cite{K3} to such class, equivalence
of this definition to Definition \ref{def45} must be proved. We will
do this in the more general situation in Section 5.2.

 From
Definitions \ref{kyy1} and \ref{def45} it is not clear, for which
bodies $L\in \K^n$ the relevant $k$-intersection body $K=\I\B_k (L)$
does exist. It is also not obvious which bodies actually  constitute
the class
 $\I^n_k$. The following important characterization is due to
 Koldobsky.
 \begin{theorem}\cite[Theorem 4.8]{K3}\label{tthm21} A body
$K\in \K^n$ is a $k$-intersection body if and only if
$||\cdot||_K^{-k}$ represents a positive definite tempered
distribution on $\bbr^n$, that is, the Fourier transform
$(||\cdot||_K^{-k})^\wedge$ is a positive tempered  distribution on
$\bbr^n $.
\end{theorem}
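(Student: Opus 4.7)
The plan is to recast Definition \ref{def45} as a Fourier-analytic identity between tempered distributions on $\bbr^n$, and then combine Schwartz's theorem on positive distributions with the homogeneity of $\|\cdot\|_K^{-k}$.

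For every non-negative finite Borel measure $\mu$ on $S^{n-1}$, I would introduce the tempered distribution
$$\langle T_\mu, \psi \rangle = \int_{S^{n-1}} \int_0^\infty t^{k-1}\, \psi(t\theta)\, dt\, d\mu(\theta), \qquad \psi\in\S(\bbr^n),$$
which is well defined because $0<k<n$ (integrability at the origin) and $\psi$ decays rapidly at infinity. Note that $T_\mu$ is positive, even (upon replacing $\mu$ by its symmetrization, which may be done at no loss since $\|\cdot\|_K^{-k}$ is even), and homogeneous of degree $k-n$. Interpreting the identity in Definition \ref{def45} via Parseval's duality, it says $\|\cdot\|_K^{-k} = \widehat{T_\mu}$ as elements of $\S'(\bbr^n)$; applying the Fourier transform once more, this is equivalent to $(\|\cdot\|_K^{-k})^\wedge = c_n\, T_\mu$ with $c_n=(2\pi)^n$. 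Thus ``$K$ is a $k$-intersection body'' is equivalent to ``$(\|\cdot\|_K^{-k})^\wedge$ is a positive multiple of $T_\mu$ for some such $\mu$''.

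The necessity direction is now immediate: if $K\in\I^n_k$, the reformulation exhibits $(\|\cdot\|_K^{-k})^\wedge$ as $c_n T_\mu$ with $\mu\ge 0$, and $T_\mu$ is by construction a positive tempered distribution. For the sufficiency direction, suppose $g:=(\|\cdot\|_K^{-k})^\wedge$ is a positive tempered distribution. By Schwartz's theorem, $g$ is represented by a positive Borel measure $\nu$ on $\bbr^n$, tempered at infinity. Since $\|\cdot\|_K^{-k}\in L^1_{loc}(\bbr^n)$ is even and homogeneous of degree $-k$, the measure $\nu$ is even and homogeneous of degree $k-n$; in particular $\nu(\{0\})=0$, and in polar coordinates $x=t\theta$ the scaling $\nu(\lambda A)=\lambda^{n-k}\nu(A)$ forces the product decomposition $d\nu(x)=t^{k-1}\,dt\,d\mu(\theta)$ for a unique non-negative even Borel measure $\mu$ on $S^{n-1}$.

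It remains to verify that $\mu$ is finite: test $\nu$ against a non-negative radial Schwartz bump $\psi$ whose radial profile is bounded below by a positive constant on the annulus $1\le|x|\le 2$; the polar decomposition then bounds $\mu(S^{n-1})$ by a constant multiple of $\langle \nu,\psi\rangle<\infty$. Hence $g=c_n T_\mu$ for a finite $\mu\ge 0$, and by the first paragraph $K\in\I^n_k$. The principal obstacle is this structural polar decomposition in the sufficiency direction—turning a positive tempered distribution that happens to be homogeneous of degree $k-n$ into the concrete form $t^{k-1}\,dt\otimes d\mu(\theta)$ with $\mu$ a \emph{finite} non-negative Borel measure on $S^{n-1}$; once this is in hand, the theorem reduces to bookkeeping of Fourier-inversion constants and the equivalence is transparent.
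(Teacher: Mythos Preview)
Your argument is correct (modulo a harmless slip: the scaling relation should read $\nu(\lambda A)=\lambda^{k}\nu(A)$, not $\lambda^{n-k}$, though your polar form $t^{k-1}dt\,d\mu(\theta)$ is right). Note that in this paper Theorem~\ref{tthm21} is merely quoted from \cite{K3}; the paper's own proof of the corresponding fact is the equivalence (a)$\Leftrightarrow$(b)$\Leftrightarrow$(c) in Theorem~\ref{pr0} (specialized to $\lam=k$ in Theorem~\ref{aprr0}), and that proof proceeds quite differently from yours.

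Your route is the direct one: reinterpret Definition~\ref{def45} as $(\|\cdot\|_K^{-k})^\wedge=(2\pi)^n T_\mu$, then for the converse invoke Schwartz's representation of positive tempered distributions by tempered measures and disintegrate the resulting homogeneous measure in polar coordinates. The paper instead bypasses both Schwartz's theorem on $\bbr^n$ and the polar disintegration by using Semyanistyi's formula~(\ref{cfl}), which already writes $[E_{-k}\rho_K^k]^\wedge$ as $2^{n-k}\pi^{n/2}E_{k-n}M^{1+k-n}\rho_K^k$; positivity of the Fourier transform is then detected by pairing with separable test functions $\phi(x)=\psi(|x|)\,\omega(x/|x|)$, which immediately isolates the spherical factor and yields $M^{1+k-n}\rho_K^k\in\M_{e+}(S^{n-1})$ via Theorem~\ref{sm}. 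Your approach is more self-contained and closer to Koldobsky's original; the paper's approach buys a uniform treatment of all real $\lam\notin\Lambda_0$ and plugs directly into the cosine-transform calculus (Lemmas~\ref{l1}--\ref{l34}) that drives the restriction theorems and the structural results on $\I^n_\lam$ in Sections~5--6.
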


The concept of $k$-intersection body is  related to another
important development. For $K\in \K^n$, the quasi-normed space
$(\bbr^n, ||\cdot||_K)$ is said to be isometrically embedded in
$L_p, \; p>0$, if there is a linear operator $T: \bbr^n \to L_p
([0,1])$ so that $||x||_K=||Tx||_{L_p ([0,1])}$.
\begin{theorem}\cite[Theorem 6.10]{K3}\label{ttthm21}  The
space $(\bbr^n, ||\cdot||_K)$ embeds isometrically  in $L_p, \; p>0,
\; p\neq 2,4, \ldots\,$, if and only if $\Gam (-p/2)
(||\cdot||_K^p)^\wedge$ is a positive distribution on $\bbr^n
\setminus \{0\}$.
\end{theorem}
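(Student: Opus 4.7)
The plan is to reduce the theorem to the integral representation
\[ ||x||_K^p \;=\; \int_{S^{n-1}} |\langle x, \theta\rangle|^p \, d\nu(\theta), \qquad x \in \bbr^n, \]
with $\nu$ a finite positive even Borel measure on $S^{n-1}$, and then to establish separately two equivalences: \emph{(i)}~this representation holds for some such $\nu$ iff $(\bbr^n, ||\cdot||_K)$ embeds isometrically in $L_p$; and \emph{(ii)}~it holds iff $\Gamma(-p/2)(||\cdot||_K^p)^\wedge$ is a positive distribution on $\bbr^n \setminus \{0\}$. Part \emph{(i)} is classical: the map $x \mapsto \langle x, \cdot \rangle$ gives an isometric embedding of $(\bbr^n, ||\cdot||_K)$ into $L_p(S^{n-1}, d\nu)$, which itself embeds isometrically into $L_p[0,1]$; conversely, for $p \notin 2\bbn$ the existence of an $L_p$-embedding forces such a positive representation via the Bretagnolle--Dacunha-Castelle--Krivine theorem (for $0<p<2$, through $p$-stable characteristic functions) and its extension to $p > 2$, $p\notin 2\bbn$, due to Koldobsky.

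Part \emph{(ii)} rests on the one-dimensional tempered-distributional identity
\[ (|t|^p)^\wedge(s) \;=\; \frac{2^{p+1}\,\sqrt{\pi}\,\Gamma((p+1)/2)}{\Gamma(-p/2)}\,|s|^{-p-1}, \qquad p \notin 2\bbn, \]
so that $\Gamma(-p/2)\cdot (|t|^p)^\wedge$ is manifestly a \emph{positive} distribution on $\bbr \setminus \{0\}$. Lifting to $\bbr^n$: the function $x \mapsto |\langle x, \theta\rangle|^p$ is constant along $\theta^\perp$, so its Fourier transform is supported on the line $\bbr \theta$, and $\Gamma(-p/2)\cdot (|\langle \cdot, \theta\rangle|^p)^\wedge$ is a positive distribution on $\bbr^n \setminus \{0\}$. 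The implication ``representation~$\Rightarrow$~positivity'' is then immediate by applying the Fourier transform termwise in $\theta$ to the representation.

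For the converse, set $u := \Gamma(-p/2)(||\cdot||_K^p)^\wedge$. Since $||\cdot||_K^p$ is even and homogeneous of degree $p$, $u$ is an even tempered distribution on $\bbr^n$, homogeneous of degree $-n-p$. By hypothesis $u$ restricts to a positive Radon measure on $\bbr^n \setminus \{0\}$, and its homogeneity forces a polar decomposition $r^{-p-1}\, dr\, d\tilde\nu(\theta)$ for some finite positive even Borel measure $\tilde\nu$ on $S^{n-1}$. By the Fourier identity above, this restriction coincides on $\bbr^n \setminus \{0\}$ with the Fourier transform of a positive constant multiple of $\int_{S^{n-1}} |\langle x, \theta\rangle|^p\, d\tilde\nu(\theta)$, and Fourier inversion then yields the desired representation. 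The main obstacle is the extension across the origin: since $p>0$, the density $r^{-p-1}$ is non-integrable near $r=0$, so $u$ cannot be read off from its polar form as a tempered distribution on all of $\bbr^n$. A priori $u$ might differ from the Fourier transform of the right-hand side by a distribution supported at $\{0\}$; however, for an even distribution homogeneous of degree $-n-p$ this would have to be a linear combination of $\partial^k \delta_0$ with $|k| = p$ even, i.e.\ would require $p \in 2\bbn$, which is precisely excluded by hypothesis. I would carry out this uniqueness step within the analytic $\ia$-family framework developed in Section~3, which supplies the regularization needed to justify all of the distributional manipulations above.
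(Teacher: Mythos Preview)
The paper does not give its own proof of this statement: Theorem~\ref{ttthm21} is quoted in the Introduction as a known result from Koldobsky's monograph \cite[Theorem~6.10]{K3}, and is used only as motivation. There is therefore nothing in the paper to compare your argument against directly.

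That said, your outline is the standard route and is structurally sound. Part~(i) is indeed the classical L\'evy--Bretagnolle--Dacunha-Castelle--Krivine characterization (extended by Koldobsky to $p>2$, $p\notin 2\bbn$), and your treatment of the origin obstruction in part~(ii)---ruling out $\partial^k\delta_0$ contributions by homogeneity and evenness, which forces $p\in 2\bbn$---is exactly the right mechanism. The one step that is genuinely delicate and that you only sketch is verifying that the tempered Fourier transform of $g(x)=\int_{S^{n-1}}|\langle x,\theta\rangle|^p\,d\tilde\nu(\theta)$ agrees with $u$ away from the origin; integrating the one-dimensional identity in $\theta$ is not quite rigorous as stated, since each term is a distribution supported on a line. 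This is precisely where the paper's formula~\eqref{cf} (Semyanistyi's identity $[E_{1-n-\a}f]^\wedge=2^{1-\a}\pi^{n/2}E_{\a-1}M^\a f$, here with $\a=1+p$) does the work cleanly, and is presumably what you intend by invoking the analytic family from Section~3. With that identity in hand, your argument closes; in effect you are reproducing the equivalence (a)$\Leftrightarrow$(c) of the paper's Theorem~\ref{pr0} specialized to $\lam=-p$.
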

Following Theorems \ref{ttthm21} and \ref{tthm21},  one can formally
say that $K\in \I^n_k$  if and only if $(\bbr^n, ||\cdot||_K)$
embeds isometrically in $L_{-k}$. This observation, combined with
Definition \ref{def45},
 was used by A. Koldobsky to define the concept of ``isometric embedding in
 $L_p$'' for negative $p$.
\begin{definition}\label{def45k} \cite[ Definition 6.14]{K3} Let $0<p<n$, $K\in
\K^n$. The space $(\bbr^n, ||\cdot||_K)$ is said to be isometrically
embedded in $L_{-p}$ if  there is a non-negative finite Borel
measure $\mu$ on $S^{n-1}$, so that for every Schwartz function
$\phi$,
\[\int_{\bbr^n}||x||_K^{-p} \phi (x)\, dx=\int_{S^{n-1}}\Big
[\int_0^\infty t^{p-1}\hat \phi (t\theta) \,dt \Big ] \,d\mu
(\theta),\] where $\hat \phi$ denotes the Fourier transform of
$\phi$.
\end{definition}
Origin-symmetric bodies $K$  in this definition can be regarded as
``unit balls of $n$-dimensional subspaces of $L_{-p}$''. Comparing
Definitions \ref{def45k} and \ref{def45}, one might call these
bodies ``$p$-intersection bodies''.  Since the meaning of the space
$L_{-p}$  itself is not specified in Definition \ref{def45k} and
since our paper is mostly focused on geometric properties of bodies
(rather than
 embeddings in $L_{p}$), in the following we  prefer to
 adopt another name {\it ``$\lambda$-intersection body''}, where $\lam$
 is a real number, that will be specified in due course. We denote the set of all $\lam$-intersection
 bodies in $\bbr^n$ by $\I^n_\lam$.

{\bf Contents of the paper.} We will focus on  intimate connection
between intersection bodies, spherical Radon transforms, and
generalized cosine transforms; see definitions in Section 2.2. This
approach is motivated by the fact that the volume of a central cross
section of a star body is expressed through the spherical  Radon
transform, and the latter is a member of the analytic family of the
generalized cosine transforms. These transforms were introduced by
Semyanistyi \cite{Se} and arise (up to naming and normalization) in
different contexts of analysis and geometry; see, e.g., \cite{K3},
\cite{R3}-\cite{RZ}, \cite{Sa1}, \cite{Sa2}, \cite{Str1},
\cite{Str2}.

Sections 2-4 provide analytic background for geometric
considerations in Sections 5-7.   In Section 2 we establish our
notation  and define  the generalized cosine transforms on the
sphere and the relevant dual transforms on Grassmann manifolds. In
Section 3 we present basic properties of these transforms, establish
new relations between spherical Radon transforms and the generalized
cosine transforms, and prove ``restriction theorems'', which are
akin to trace theorems in Sobolev spaces. Section 4 deals with
positive definite homogeneous distributions, that can be
characterized in terms of the generalized cosine transforms.  This
section serves as a preparation for  the forthcoming definition of
the concept of $\lam$-intersection
 body. We investigate which $\lam$'s are appropriate and why. In
 Section 5 we switch to geometry and define the class $\I^n_\lam$
  of $\lam$-intersection  bodies. The case $0<\lam<n$ corresponds to
  the
 ``unit balls of $L_{-p}$-spaces'' in the spirit of Definition
 \ref{def45k}.  The reader will
find in this section new proofs of some known facts.  We introduce
the notion of {\it $\lam$-intersection body of a star body} in
$\bbr^n$, which extends Definition \ref{kyy1} to all $\lam <n, \;
\lam \neq 0$. The class of all such  bodies  will be denoted by
$\I\B^n_\lam$. We will prove that  for all $\lam<n$, $\lam \neq 0,
-2, -4, \ldots \,$, the class $\I^n_\lam$ is the closure of
$\I\B^n_\lam$ in the radial metric. The case $\lam =1$ gives
Proposition \ref{luu3}. It will be proved that all $m$-dimensional
central sections of $\lam$-intersection bodies are
$\lam$-intersection bodies in the corresponding $m$-planes provided
$\lam<m, \; \lam \neq 0$.

The natural question arises: How to construct $\lam$-intersection
bodies? In Section 6 we  give a series of examples; some of them are
known and some are new. They can be obtained by utilizing auxiliary
statements from Section 3. In particular, the famous embedding of
Zhang's class $\Z^n_{n-k}$ into $\I^n_k$, which was first
established in \cite{K4} and studied in \cite{Mi1}, \cite{Mi2}, will
be generalized to the case, when $k$ is replaced by any $\lam \in
(0,n)$. Section 7 is devoted to the so called $(q,\ell)$-balls,
defined by
$$ B^n_{q,\ell}=\{x=(x', x''): |x'|^q +|x''|^q \le 1; \;  x' \in
\rnl, \; x''\in \rl\}, \quad q>0.$$ We show that if $0<q \le 2$,
then $B^n_{q,\ell}\in \I^n_\lam$ for all $\lam \in (0,n)$. If $q>2$
and $n-3 \le \lam <n$, we still have $B^n_{q,\ell}\in \I^n_\lam$. If
$q>2$ and $0<\lam<\lam_0=\max (n-\ell, \ell)-2$, then
$B^n_{q,\ell}\not\in \I^n_\lam$. The case, when $q>2, \; \ell>1$,
and
 $\lam_0\le\lam <n-3$ represents an open problem.

 In Section 8 we remind the  generalized Busemann-Petty problem (GBP) for
 $i$-dimensional central sections of  o.s. convex
 bodies in $\bbr^n$. This challenging problem is still open for $i=2$ and $i=3$ ($n\ge
 5$). It actually inspires the whole investigation. Using properties of the generalized cosine transforms, we
 give a short direct proof of the  fact that an affirmative answer
 to GBP implies that every
smooth o.s.  convex body in $\bbr^n$ with positive curvature  is an
$(n-i)$-intersection body. This fact was discovered by A. Koldobsky.
 The original proof in \cite{K4} is based on the embedding $\I^n_{n-i} \subset \Z^n_i$
 and Zhang's result \cite [Theorem 6]{Z1}. The latter heavily relies on the Hahn-Banach separation theorem.
Our proof is  more constructive and almost self-contained. We
conclude the paper by Appendix, which is added for convenience of
the reader.

The list of references at the end of the paper is far from being
 complete. Further references can be found in cited books and papers.

 {\bf Acknowledgement.} I am grateful to  Professor
Alexander Koldobsky, who shared with me his knowledge of the
subject. Special thanks go to Professors Erwin Lutwak, Deane Yang,
and Gaoyong Zhang for useful discussions.

\section{Preliminaries}

\subsection{Notation.} In the following, $\bbn=\{1,2, \ldots \,\}$ is
the set of all natural numbers,
 $S^{n-1}$ is the unit sphere in $\bbr^n$ with  the area $\, \sig_{n-1}=
2\pi^{n/2}/\Gam (n/2) $; $C_e(S^{n-1})$ is the space of even
continuous functions on
 $S^{n-1}$; $SO(n)$ is the special orthogonal group of
$\bbr^n$; for $\theta \in S^{n-1}$ and $\gam \in SO(n)$,  $d\theta$
and $d\gam$ denote the relevant invariant probability measures;
$\D(S^{n-1})$ is the space of $C^\infty$-functions on $S^{n-1}$
equipped with the standard topology, and $\D'(S^{n-1})$ stands for
the corresponding dual space of distributions. The subspaces of even
test functions (distributions) are denoted by $\D_{e}(S^{n-1})$ (
$\D_{e}'(S^{n-1})$); $G_{n,i}$ denotes the Grassmann manifold of
$i$-dimensional subspaces $\xi$ of $\Bbb R^n$ with the
$SO(n)$-invariant probability measure $d\xi$; $\D(G_{n,i})$ is the
space of infinitely differentiable functions on $G_{n,i}$.

We write  $\M(S^{n-1})$ and $\M(G_{n,i})$ for the spaces  of  finite
Borel measures on $S^{n-1}$ and $G_{n,i}$;
 $\M_+(S^{n-1})$  and $\M_+(G_{n,i})$ are the relevant spaces of
non-negative  measures;  $ \M_{e+}(S^{n-1})$ denotes the space of
even
 measures $\mu \in \M_+(S^{n-1})$. Given a function $\vp$ on $G_{n,i}$, we denote $\vp^\perp
(\eta)=\vp(\eta^\perp), \quad \eta \in G_{n,n-i}$. Similarly, given
a measure $\mu \in \M(G_{n,n-i})$, the corresponding ``orthogonal
measure''  $\mu^\perp$ in $ \M(G_{n,i})$ is defined by $(\mu^\perp,
\vp)=(\mu,\vp^\perp)$, $\vp \in C(G_{n,i})$.

Let $\{ Y_{j, k}  \}$ be an orthonormal basis of spherical harmonics
on $S^{n-1}$. Here $j = 0, 1, 2, \dots ,$ and $k = 1, 2, \dots, d_n
(j)$, where $d_n (j)$  is the dimension of the subspace of spherical
harmonics of degree $j$. Each function $\om \in \D(S^{n-1})$ admits
a decomposition $\om =\sum_{j,k} \om_{j,k}Y_{j, k}$ with the
Fourier-Laplace coefficients $\om_{j,k}=\int_{S^{n-1}}\om
(\theta)Y_{j, k}(\theta)d\theta$, which decay rapidly as $j\to
\infty$. Each distribution $f \in \D'(S^{n-1})$ can be defined by
$(f,\om)=\sum_{j,k} f_{j,k}\om_{j,k}$ where
 $f_{j,k}=(f,Y_{j, k})$ grow not faster than $j^m$ for some
integer $m$. We will need the Poisson integral, which is defined for
$f\in L^1 (S^{n-1})$  by \be\label{pu} (\Pi_t f)(\theta)= (1-t^2)
\int_{S^{n-1}} f(u) |\theta-t u|^{-n} du, \quad 0<t <1,\ee and has
the Fourier-Laplace decomposition $\Pi_t f=\sum_{j,k} t^j
f_{j,k}Y_{j, k}$ \cite {SW}. For $f \in \D'(S^{n-1})$, this
decomposition serves as a definition of $\Pi_t f$. For harmonic
analysis on the unit sphere, the reader is referred to \cite{Le},
\cite{M}, \cite{Ne}, \cite{SW}, and a survey article \cite{Sa2}.

\subsection{Basic integral transforms} For integrable functions $f$ on
$S^{n-1}$ and $\varphi $ on $G_{n,i}$, $1\le i\le n-1$,  the
spherical Radon transform $(R_i f)(\xi), \, \xi \in G_{n,i}$,
 and its dual  $(R_i^*\varphi)(\theta), \, \theta \in S^{n-1}$,  are defined by \be\label{rts}
 (R_i f)(\xi) = \int_{\theta \in S^{n-1}\cap\xi} f(\theta) \, d_\xi \theta, \qquad
  (R_i^* \varphi)(\theta) = \int_{\xi \ni \theta}  \varphi (\xi)  \, d_\theta \xi,
\ee where $d_\xi \theta$ and $ d_\theta \xi$ denote the probability
measures on the  manifolds $S^{n-1}\cap\xi$ and $\{\xi\in G_{n,i}:
\xi \ni \theta \}$, respectively. The precise meaning of the second
integral is
 \be\label{drt}
 (R_i^*
 \vp)(\theta)=\int_{SO(n-1)}\vp (r_\theta \gam p_0) \,  d\gam, \qquad \theta \in
 S^{n-1},
 \ee
 where $p_0$ is an arbitrarily fixed coordinate
$i$-plane containing the north pole $ e_{n}$  and $ r_\theta \in
SO(n)$ is a rotation satisfying $r_\theta e_n =\theta$.

Operators $R_i$ and $R_i^*$ extend to finite Borel measures in a
canonical way, using the duality \be\label{dual} \int_{G_{n,i}}
(R_if)(\xi) \vp (\xi) d \xi =
 \int_{S^{n-1}} f(\theta) (R_i^* \vp) (\theta)
d\theta. \ee Specifically, for $\mu \in \M(S^{n-1})$ and $m \in
\M(G_{n,i})$,  we define $R_i\mu \in \M(G_{n,i})$ and $R_i^* m \in
\M(S^{n-1})$ by \be\label{fina} (R_i\mu,
\vp)\!=\!\!\int_{S^{n-1}}\!(R_i^* \vp)(\theta)d\mu (\theta), \quad
(R_i^* m, f)\!=\!\!\int_{G_{n,i}}\!(R_if)(\xi) dm(\xi),\ee where
$\vp \in C(G_{n,i}), \; f \in C(S^{n-1})$.

 The generalized cosine transforms are defined by
 \be\label{rka} (R_i^\a f)(\xi)=\gam_{n,i}(\a)\,
\int_{S^{n-1}} |\text{\rm Pr}_{\xi^\perp} \theta|^{\a+i-n} \,
f(\theta) \, d\theta,\ee
 \be\label{rkad}
 (\overset *  R_i {}^\a \vp)(\theta)=\gam_{n,i}(\a)\,\int_{G_{n,i}}  |\text{\rm
Pr}_{\xi^\perp} \theta|^{\a+i-n} \, \vp (\xi)\, d\xi, \ee
$$\gam_{n,i}(\a)=\frac{ \sig_{n-1}\,\Gamma((n - \a- i)/2)} {2\pi^{(n-1)/2} \,
\Gamma(\a/2)}, \qquad Re \, \a
>0, \quad \a+i-n \neq 0,2,4, \ldots .$$
Here  $\text{\rm Pr}_{\xi^\perp} \theta $ stands for
 the  orthogonal
 projection of $\theta$ onto $\xi^\perp$, the orthogonal
 complement of $\xi\in G_{n,i}$.
 If $f$ and $\vp$ are smooth enough, then integrals
(\ref{rts}) can be regarded (up to a constant multiple) as members
of the relevant analytic families  (\ref{rka}) and (\ref{rkad});
 cf. Lemma \ref{lar}.
 The particular case $i=n-1$ in (\ref{rts}) corresponds to the
Minkowski-Funk transform \be\label{mf} (M f)(u)=\int_{\{\theta \,:
\, \theta \cdot u =0\}} f(\theta) \,d_u\theta=(R_{n-1} f)(u^\perp),
\qquad u \in S^{n-1}, \ee which integrates a function $f$  over
great circles of codimension $1$. This transform is a member of the
analytic family \be\label{af} (M^\a f)(u)=(R_{n-1}^\a f)(u^\perp)=
\gam_n(\a)\, \int_{S^{n-1}} f(\theta) |\theta \cdot u|^{\a-1}
\,d\theta, \ee \be\label{beren} \gam_n(\a)\!=\!{
\sig_{n-1}\,\Gamma\big( (1\!-\!\a)/2\big)\over 2\pi^{(n-1)/2} \Gamma
(\a/2)}, \qquad Re \, \a \!>\!0, \quad \a \!\neq \!1,3,5, \ldots
.\ee The values $\a=1,3,5, \ldots$ are  poles of the Gamma function
$\Gamma ( (1\!-\!\a)/2)$. In some occasions we include these values
into consideration and  set \be\label{afu} (\tilde M^\a f)(u)=
\int_{S^{n-1}} f(\theta) |\theta \cdot u|^{\a-1} \,d\theta. \ee

{\bf Historical notes.} Regarding spherical Radon transforms
(\ref{rts}) and the Minkowski-Funk transform (\ref{mf}), see
\cite{GGG}, \cite{He}, \cite{R1}, \cite{R2}. The first detailed
investigation of the analytic family
 $\{M^\a \}$ is due to  Semyanistyi \cite{Se}, who showed that
 these operators naturally arise in the Fourier analysis of
 homogeneous functions. The case $\a=2$ in (\ref{afu}) was known before, thanks to W. Blaschke, A.D. Alexandrov, and
P. L\'evy. Integrals (\ref{af})  (sometimes with different
normalization) arise in diverse areas of analysis and geometry; see
\cite{K3}, \cite{R3} - \cite{R2}, \cite{Sa2}, \cite{Str1}, and
references therein. In
 convex geometry and  Banach space theory, operators
(\ref{afu}) with $\a-1$ replaced by $p$  are known as the $p$-cosine
transforms. More general analytic families (\ref{rka}) and
(\ref{rkad}) were introduced in \cite {R1}.

\section{Analytic Families of the Generalized Cosine Transforms}

\setcounter{equation}{0}

\subsection{Basic properties} Below we review basic properties of integrals
(\ref{rka}), (\ref{rkad}), (\ref{af}); see \cite {R1}, \cite {R2}
for more details. For  integrable functions $f$ and $\vp$ and $Re \,
\a>0$, integrals (\ref{rka}), (\ref{rkad}) and (\ref{af}) are
absolutely convergent. When $f$ and $\vp$ are infinitely
differentiable, these integrals extend meromorphically to all $\a
\in \bbc$.
\begin{lemma}\label{lar} If $f$ and $\vp$ are continuous functions, then
\bea\label{lim} \lim\limits_{\a \to 0} R_i^\a f&=&R_i^0 f =c_i
\,R_if,
\qquad c_i=\frac{\sig_{i-1}}{2\pi^{(i-1)/2}};\\
\label{limd} \lim\limits_{\a \to 0} \overset *  R_i {}^\a \vp
&=&\overset *  R_i {}^0 \vp =c_i\, R_i^* \vp,\\
\label{lim1} \lim\limits_{\a \to 0} M^\a f&=&M^0 f= c_{n-1}\,
 Mf, \qquad c_{n-1}=\frac{\sig_{n-2}}{2\pi^{(n-2)/2}}.\eea
 Hence, the Radon transform, its dual, and the Minkowski-Funk
transform can be regarded (up to a constant multiple) as  members of
the corresponding analytic families $\{ R_i^\a\}$, $\{\overset * R_i
{}^\a\}$, $\{ M^\a\}$. \end{lemma}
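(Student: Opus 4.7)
The plan is to reduce everything to a single Mellin-type computation obtained by polar (bispherical) integration on $S^{n-1}$ relative to the orthogonal splitting $\bbr^n = \xi \oplus \xi^\perp$. Identity (\ref{lim1}) will follow immediately as the case $i=n-1$ of (\ref{lim}), since with $\xi = u^\perp$ one has $|\text{\rm Pr}_{\xi^\perp}\theta| = |\theta\cdot u|$, and the constants match: $c_{n-1}$ in (\ref{lim1}) is exactly $c_i$ with $i=n-1$. So I focus on (\ref{lim}) first and then adapt the argument to (\ref{limd}).

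For (\ref{lim}), I fix $\xi \in G_{n,i}$, set $\ell = n - i$, and write each $\theta \in S^{n-1}$ in bispherical form $\theta = \cos\phi\cdot u + \sin\phi\cdot v$ with $u \in S^{n-1}\cap\xi$, $v \in S^{n-1}\cap\xi^\perp$, $\phi \in [0,\pi/2]$. Then $|\text{\rm Pr}_{\xi^\perp}\theta| = \sin\phi$, $\a + i - n = \a - \ell$, and the invariant probability measure factors as
\[
d\theta = \frac{\sig_{i-1}\sig_{\ell-1}}{\sig_{n-1}}\,\cos^{i-1}\phi\,\sin^{\ell-1}\phi\,d\phi\,d_\xi u\,d_{\xi^\perp}v.
\]
After averaging in $(u,v)$ and cancelling $\sin^{\ell-1}\phi$ against $\sin^{\a-\ell}\phi$, definition (\ref{rka}) becomes
\[
(R_i^\a f)(\xi) = \gam_{n,i}(\a)\,\frac{\sig_{i-1}\sig_{\ell-1}}{\sig_{n-1}}\int_0^{\pi/2}\sin^{\a-1}\phi\,\cos^{i-1}\phi\,F(\phi)\,d\phi,
\]
where $F(\phi)$ is the double spherical average of $f(\cos\phi\cdot u + \sin\phi\cdot v)$ over $(u,v)$. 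By continuity of $f$, the function $G(\phi)=\cos^{i-1}\phi\,F(\phi)$ is continuous on $[0,\pi/2]$ with $G(0)=F(0) = (R_i f)(\xi)$.

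Next I split the $\phi$-integral as $G(0)\int_0^{\pi/2}\sin^{\a-1}\phi\,d\phi + \int_0^{\pi/2}\sin^{\a-1}\phi\,(G(\phi)-G(0))\,d\phi$. The beta identity $\int_0^{\pi/2}\sin^{\a-1}\phi\,d\phi = (\sqrt{\pi}/2)\,\Gamma(\a/2)/\Gamma((\a+1)/2)$ shows that $\gam_{n,i}(\a)$ times the first piece has a finite limit at $\a = 0$, since the $\Gamma(\a/2)$ in the numerator kills the one in the denominator of $\gam_{n,i}$; explicitly, one gets $\sig_{n-1}\Gamma(\ell/2)\,G(0)/(4\pi^{(n-1)/2})$. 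A standard $\varepsilon/\delta$ argument based only on continuity of $G$ at the origin shows that $\a$ times the second piece tends to $0$, and combined with the asymptotics $\gam_{n,i}(\a) \sim \sig_{n-1}\Gamma(\ell/2)\,\a/(4\pi^{(n-1)/2})$ as $\a \to 0^+$, the second term contributes nothing in the limit. Collecting the surface-area constants via $\Gamma(\ell/2)\sig_{\ell-1} = 2\pi^{\ell/2}$ and simplifying the exponents by $\ell/2 - (n-1)/2 = -(i-1)/2$ yields exactly $c_i\,(R_i f)(\xi)$ and establishes (\ref{lim}).

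For (\ref{limd}), the cleanest route is via duality: Fubini applied to the symmetric kernel $|\text{\rm Pr}_{\xi^\perp}\theta|^{\a+i-n}$ extends (\ref{dual}) to the analytic family, and the uniform convergence $R_i^\a f \to c_i R_i f$ already obtained implies weak convergence of $\overset *  R_i {}^\a \vp$ against arbitrary continuous $f$, with limit $c_i R_i^*\vp$. A variant of the same bispherical/Mellin argument---now disintegrating $d\xi$ along the angle $\psi = \arcsin|\text{\rm Pr}_{\xi^\perp}\theta|$, whose level sets are $SO(n-1)$-orbits carrying weight $\cos^{\ell-1}\psi\,\sin^{i-1}\psi\,d\psi$---upgrades this to the required pointwise convergence. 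The main obstacle throughout is pure Gamma-function bookkeeping; the one genuinely analytic step, the Mellin splitting at the boundary point $\phi = 0$, is standard but does require continuity of $f$ (respectively $\vp$), which is exactly the hypothesis at hand.
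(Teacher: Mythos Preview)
Your proof is correct and follows essentially the same route as the paper: bispherical coordinates on $S^{n-1}$ relative to $\xi\oplus\xi^\perp$, reduction of $R_i^\a f$ to a one-dimensional Mellin-type integral, and passage to the limit $\a\to 0$ using only continuity of the averaged integrand at the boundary point. The paper packages the last step more compactly by writing $(R_i^\a f)(\xi)=\dfrac{c_i(\a)}{\Gamma(\a/2)}\int_0^1 t^{\a/2-1}F(t)\,dt$ and invoking the standard fact $\lim_{\a\to 0}\Gamma(\a/2)^{-1}\int_0^1 t^{\a/2-1}F(t)\,dt=F(0)$, whereas you carry out the underlying split $G(0)\int+\int(G-G(0))$ by hand; these are the same argument. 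For (\ref{limd}) the paper simply asserts that it follows from (\ref{lim}), while you supply a duality/weak-convergence argument together with a sketch of the parallel disintegration of $d\xi$; this is a little more explicit than the paper but not a genuinely different idea.
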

\begin{proof}
Formulas (\ref{limd}) and (\ref{lim1})  follow from (\ref{lim}). To
prove (\ref{lim}), we write  (\ref{rka}) in  bi-spherical
coordinates
 $\theta =u\sin\, \psi +v\cos\,
\psi$, where
$$u\in S^{n-1} \cap  \xi \sim S^{i-1}, \quad v \in
S^{n-1} \cap \xi^\perp \sim S^{n-i-1}, \quad 0\le \psi\le\pi/2.$$
$$
d\theta=c\, \sin^{i-1} \psi \,\cos^{n-i-1} \psi \,d\psi du dv, \quad
c=\sig_{i-1}\sig_{n-i-1}/\sig_{n-1}.$$ This gives \bea (R_i^\a
f)(\xi)&=&c\,\gam_{n,i}(\a) \int_0^{\pi/2 }\sin^{i-1}\psi \,
\cos^{\a-1}\psi \,d\psi\nonumber\\&\times&\int_{S^{n-1} \cap
\xi^\perp} dv\int_{S^{n-1} \cap  \xi} f(u\sin\, \psi
\!+\!v\cos\, \psi)\, du\nonumber\\
&=&\frac{c_i (\a)}{\Gamma(\a/2)}\,\int_0^1 t^{\a/2
-1}F(t)\,dt,\nonumber\eea where
$$
c_i (\a)=\frac{c\,\gam_{n,i}(\a)\, \Gamma(\a/2)}{2}=
\frac{\sig_{i-1}\sig_{n-i-1}}{2} \, \frac{\Gamma((n - \a- i)/2)}
{2\pi^{(n-1)/2}} \, \to \, \frac{\sig_{i-1}}{2\pi^{(i-1)/2}}$$ as
$\a \to 0$, and
$$ F(t)=(1-t^2)^{i/2 -1}\int_{S^{n-1} \cap
\xi^\perp} dv\int_{S^{n-1} \cap  \xi}f(u\sqrt{1-t^2}\!+\!vt)\, du.
$$ Since
$$\lim\limits_{\a \to 0}\frac{1}{\Gamma(\a/2)}\int_0^1
t^{\a/2 -1}F(t)\,dt=F(0)=\int_{S^{n-1} \cap  \xi}f(u)d u=(R_i
f)(\xi),$$ we are done. \end{proof}

Analytic continuation of integrals (\ref{af}) can be realized in
spherical harmonics as $ M^\a f\!=\!\sum\limits_{j, k} m_{j,\a}
f_{j, k} Y_{j, k}$, where \be\label{55} m_{j, \a} \!=\!\left\{
\begin{array}{cl} \!(-1)^{j/2}\, \displaystyle{\frac{\Gamma
(j/2+(1-\a)/2)}{ \Gamma (j/2+(n-1+\a)/2)}}
  &  \mbox{\rm if $j$ is even}, \\
0 &  \mbox{\rm if $j$ is odd};
\end{array}
\right.  \ee see \cite{R3}, \cite{R2}. If $f\!\in \! \D'(S^{n-1})$,
then $M^\a f$ is a distribution defined by $$(M^\a
f,\om)\!=\!(f,M^\a\om)\!=\!\sum\limits_{j,k} m_{j,\a} \,f_{j,
k}\,\om_{j, k},\quad \om \!\in \!\D(S^{n-1}); \quad\a \!\neq
\!1,3,5, \ldots \,.$$
\begin{lemma}\label{l1} Let $\a, \b \in \bbc; \; \a, \b \neq
1,3,5, \ldots \,$. If $\a+\b=2-n$ and $f\in \D_e(S^{n-1})$ $($or $f
\in \D'_e(S^{n-1}))$, then \be\label{st}M^\a M^{\b}f=f.\ee If $\,
\a, 2-n-\a \neq 1,3,5, \ldots $, then $M^\a$ is an automorphism of
the spaces $\D_e(S^{n-1})$ and $\D'_e(S^{n-1})$.
\end{lemma}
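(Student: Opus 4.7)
The natural plan is to work entirely in the spherical harmonic basis, since formula (3.5) gives an explicit diagonalization of $M^\a$.

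First I would write, for $f\in\D_e(S^{n-1})$,
\[
M^\a M^\b f \;=\; \sum_{j,k} m_{j,\a}\,m_{j,\b}\,f_{j,k}\,Y_{j,k},
\]
where only even $j$ contribute (both because $f$ is even, so $f_{j,k}=0$ for odd $j$, and because $m_{j,\a}=0$ for odd $j$). The core computation is that, under the assumption $\a+\b=2-n$, the substitution $1-\b=n-1+\a$ and $n-1+\b=1-\a$ shows, for every even $j$,
\[
m_{j,\a}\,m_{j,\b}
=\frac{\Gam\big(j/2+(1-\a)/2\big)}{\Gam\big(j/2+(n-1+\a)/2\big)}\cdot
\frac{\Gam\big(j/2+(n-1+\a)/2\big)}{\Gam\big(j/2+(1-\a)/2\big)}=1,
\]
the sign $(-1)^j$ being $+1$. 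Hence $M^\a M^\b f=\sum_{j,k}f_{j,k}Y_{j,k}=f$, which is (3.6).

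Before concluding, I would check that all the Gamma factors are finite. Since $\Gam$ never vanishes, we only need the numerators to avoid the poles $0,-1,-2,\ldots\,$; for $\Gam(j/2+(1-\a)/2)$ this fails iff $\a\in\{1+j,3+j,\ldots\}$ for some even $j\ge 0$, i.e.\ iff $\a$ is a positive odd integer, which is exactly the excluded set. The same check applies to $\b=2-n-\a$.

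For the automorphism claim, I would first argue that $M^\a$ maps $\D_e(S^{n-1})$ continuously into itself: by Stirling, $m_{j,\a}=O(j^{-(n-2+2\a)/2 \cdot \text{something}})$ — more precisely $|m_{j,\a}|\lesssim (1+j)^{-(n-2)/2-\Re\a}$, which is at worst polynomial in $j$; since $\{f_{j,k}\}$ decays faster than any polynomial, so does $\{m_{j,\a}f_{j,k}\}$, giving $M^\a f\in\D_e(S^{n-1})$. The identity $M^\a M^{2-n-\a}=M^{2-n-\a}M^\a=\mathrm{id}$ just proved then shows $M^\a$ is a bijection of $\D_e(S^{n-1})$ with continuous inverse $M^{2-n-\a}$. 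The distributional case is handled by duality: $M^\a$ is defined on $\D'_e(S^{n-1})$ by $(M^\a f,\om)=(f,M^\a\om)$, so the automorphism property on $\D_e$ transfers verbatim, and (3.6) for $f\in\D'_e(S^{n-1})$ follows by pairing against a test function $\om\in\D_e(S^{n-1})$: $(M^\a M^\b f,\om)=(f,M^\b M^\a\om)=(f,\om)$.

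The only mildly nontrivial step is the mapping property $M^\a:\D_e\to\D_e$ — the algebraic identity $m_{j,\a}m_{j,\b}=1$ is immediate once one performs the substitution $\a+\b=2-n$; I expect no genuine obstacle beyond keeping track of which odd integers must be excluded so that no Gamma factor becomes infinite.
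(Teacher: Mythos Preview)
Your proposal is correct and follows exactly the same route as the paper: reduce (\ref{st}) to the multiplier identity $m_{j,\a}m_{j,\b}=1$ for even $j$ when $\a+\b=2-n$, verify it from (\ref{55}), and then use the power behavior of $m_{j,\a}$ as $j\to\infty$ to get the automorphism property on $\D_e(S^{n-1})$ (and hence on $\D'_e(S^{n-1})$ by duality). The paper's own proof is only three lines and simply cites \cite{Ne} for the ``standard theory of spherical harmonics''; you have spelled out the same argument in more detail, including the pole check and the Stirling estimate, so there is nothing to add.
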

\begin{proof} The equality (\ref{st}) is
equivalent to $m_{j, \a} m_{j, \b}=1$, $\;\a+\b=2-n$. The latter
 follows
 from (\ref{55}). The second statement  is a
consequence of the standard theory of spherical harmonics \cite{Ne},
because the Fourier-Laplace multiplier $m_{j,\a}$  has a power
behavior as $j \to \infty$.
\end{proof}
\begin{corollary} The Minkowski-Funk transform on the spaces
$\D_e(S^{n-1})$  and $\D'_e(S^{n-1})$
 can be inverted by the formula
\be\label{mmm} (M)^{-1}=c_{n-1}\,M^{2-n}, \qquad
c_{n-1}=\frac{\sig_{n-2}}{2\pi^{(n-2)/2}}.\ee
\end{corollary}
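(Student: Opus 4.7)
The statement is essentially a direct corollary of the two preceding results, so the plan is short and structural rather than computational.

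First, I would apply Lemma \ref{l1} with the specific choice $\alpha=0$ and $\beta=2-n$. The compatibility condition $\alpha+\beta=2-n$ is satisfied, and for $n\ge 2$ neither value lies in the excluded set $\{1,3,5,\ldots\}$ of poles of $\Gamma((1-\a)/2)$; moreover with $\alpha=2-n$ the second condition of Lemma \ref{l1} (both $\a$ and $2-n-\a$ avoiding the odd positive integers) also holds, so $M^{2-n}$ is an automorphism of $\D_e(S^{n-1})$ and $\D'_e(S^{n-1})$. Lemma \ref{l1} therefore yields
\begin{equation*}
M^{0}M^{2-n}f=f
\qquad\text{for every }f\in\D_e(S^{n-1})\ \text{and every }f\in\D'_e(S^{n-1}).
\end{equation*}

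Next, I would invoke the limit relation (\ref{lim1}) of Lemma \ref{lar} to identify the operator $M^{0}$. On smooth even functions this gives $M^{0}=c_{n-1}M$ with $c_{n-1}=\sigma_{n-2}/(2\pi^{(n-2)/2})$; on distributions the identification is then transferred by duality $\bigl(M^{0}f,\omega\bigr)=\bigl(f,M^{0}\omega\bigr)$, using that both $M$ and the analytic family $M^\a$ are symmetric on even test functions. Substituting into the previous display gives
\begin{equation*}
M\bigl(c_{n-1}M^{2-n}f\bigr)=f,
\end{equation*}
and applying the automorphism $M^{2-n}$ after $M$ (equivalently, running Lemma \ref{l1} in the reverse order with the same exponents, which is allowed because the compositions $M^\a M^\b$ and $M^\b M^\a$ agree by the multiplier formula (\ref{55})) yields $c_{n-1}M^{2-n}Mf=f$ as well. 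Hence $c_{n-1}M^{2-n}$ is a two-sided inverse of $M$, and (\ref{mmm}) is established.

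There is no serious obstacle: the only point requiring a moment's care is that in Lemma \ref{l1} the value $\a=0$ is not explicitly among the allowed parameters in the \emph{automorphism} clause (only in the inversion identity), so one must verify—via the multipliers $m_{j,0}$ in (\ref{55}), which are nonzero for every even $j$—that $M^{0}$ is a bijection of $\D_e(S^{n-1})$ (and of $\D'_e(S^{n-1})$) onto itself, thereby making the manipulation ``apply $M^{2-n}$ on the right'' legitimate. Once this is observed, the corollary is immediate.
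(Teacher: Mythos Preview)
Your proposal is correct and follows exactly the route the paper intends: the corollary is stated without a separate proof precisely because it is obtained by taking $\a=0$, $\b=2-n$ in Lemma~\ref{l1} and then identifying $M^{0}=c_{n-1}M$ via (\ref{lim1}). Your extra remarks about commutativity and bijectivity of $M^{0}$ are harmless but not needed, since Lemma~\ref{l1} already gives $M^{2-n}M^{0}f=f$ directly by swapping the roles of $\a$ and $\b$.
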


Note that there is a wide variety of diverse inversion formulas for
the Minkowski-Funk transform (see \cite{GGG}, \cite{He}, \cite{R2}
and references therein), but all of them
 are, in fact, different realizations of (\ref{mmm}), depending on
 classes of functions.

\subsection{Auxiliary statements}
We establish some connections between operator families defined
above.
\begin{lemma}\label{lcon} Let $\a, \b \in \bbc; \; \a, \b \neq
1,3,5, \ldots \,$. If $Re\, \a >Re\, \b$, then $M^\a=M^\b A_{\a,\b
}$, where $A_{\a,\b }$ is a spherical convolution operator  with the
Fourier-Laplace multiplier \be\label{ab} a_{\a,\b }(j)= \frac{\Gamma
(j/2+(1-\a)/2)}{ \Gamma (j/2+(n-1+\a)/2)}\, \frac{\Gamma
(j/2+(n-1+\b)/2)}{ \Gamma (j/2+(1-\b)/2)},\ee so that $a_{\a,\b
}(j)\sim (j/2)^{\b-\a}$ as $ j \to \infty$. If $\a$ and $ \b$ are
real numbers satisfying $\a> \b>1-n,\; \a+\b<2$, then $A_{\a,\b }$
is
 an integral operator such that $A_{\a,\b }f\ge 0$ for every
 non-negative  $f \in L^1 (S^{n-1})$.
\end{lemma}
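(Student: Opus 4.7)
The first part follows directly from the spherical harmonic diagonalization in (3.5). Since $M^\a$ acts on the Fourier--Laplace decomposition by multiplication by $m_{j,\a}$ and, under the exclusion $\b\neq 1,3,5,\dots$, the coefficients $m_{j,\b}$ are nonzero on the even subspace (while both $m_{j,\a}$ and $m_{j,\b}$ vanish on the odd part), we simply define $A_{\a,\b}$ on $\D_e(S^{n-1})$ (and, by duality, on $\D'_e(S^{n-1})$) to be the spherical convolution operator whose multiplier is $a_{\a,\b}(j):=m_{j,\a}/m_{j,\b}$ for even $j$. Substituting (3.5) cancels the alternating signs $(-1)^{j/2}$ and gives the stated Gamma-ratio formula, while $M^\b A_{\a,\b}$ then has multiplier $m_{j,\b}\cdot a_{\a,\b}(j)=m_{j,\a}$, so $M^\b A_{\a,\b}=M^\a$. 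The asymptotics $a_{\a,\b}(j)\sim(j/2)^{\b-\a}$ come from Stirling's ratio $\Gam(z+p)/\Gam(z+q)\sim z^{p-q}$: each of the two Gamma quotient factors contributes an exponent $(\b-\a)/2$.

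For the positivity claim, the plan is to represent $A_{\a,\b}$ as a non-negative superposition of Poisson integrals. The tool is the Eulerian Beta identity
$$\frac{\Gam(x)}{\Gam(x+y)}=\frac{1}{\Gam(y)}\int_0^1 t^{x-1}(1-t)^{y-1}\,dt\qquad(x,y>0),$$
applied with $y=(\a-\b)/2>0$ to both Gamma ratios in $a_{\a,\b}(j)$, with $x=j/2+(1-\a)/2$ in one and $x=j/2+(n-1+\b)/2$ in the other. Under the hypotheses $\a>\b>1-n$ and $\a+\b<2$ (with a preliminary shift via $\Gam(x+1)=x\Gam(x)$ in the cases where some $x$ fails positivity at $j=0$), one arrives at a representation
$$a_{\a,\b}(j)=C_{\a,\b}\iint_{[0,1]^2}(ts)^{j/2}\,w(t,s)\,dt\,ds,\qquad C_{\a,\b}>0,\;w\ge 0.$$
The substitution $r=\sqrt{ts}$ turns $(ts)^{j/2}$ into $r^{j}$ and collapses the double integral into a Hausdorff-type moment representation
$$a_{\a,\b}(j)=\int_0^1 r^{j}\,d\nu_{\a,\b}(r)$$
with a non-negative finite Borel measure $\nu_{\a,\b}$ on $[0,1]$. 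Since the Poisson operator $\Pi_r$ of (2.1) has Fourier--Laplace multiplier $r^{j}$, Fubini gives $A_{\a,\b}f=\int_0^1(\Pi_r f)\,d\nu_{\a,\b}(r)$, and the positivity of the Poisson kernel $(1-r^2)|\theta-ru|^{-n}$ exhibits $A_{\a,\b}$ as an integral operator with a non-negative kernel; in particular $A_{\a,\b}f\ge 0$ whenever $f\ge 0$.

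The hard part is the simultaneous verification that the Beta-integral representations converge and that the resulting kernel $w(t,s)$ is non-negative across the full parameter range $\a>\b>1-n$, $\a+\b<2$. At values of $\a$ (or $\b$) for which the direct Beta integral fails at $j=0$, one must first shift via $\Gam(x+1)=x\Gam(x)$ into a convergent regime and then restore the pre-factor, a step that preserves signs but requires careful bookkeeping with the exponents. The condition $\a+\b<2$, combined with $\a>\b>1-n$, is exactly what ensures that after these shifts every exponent appearing in $w(t,s)$ lies in the admissible range and the representing measure $\nu_{\a,\b}$ on $[0,1]$ is finite.
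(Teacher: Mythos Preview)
Your approach is essentially the one the paper takes: represent the multiplier $a_{\a,\b}(j)$ through Beta integrals so that $A_{\a,\b}$ becomes a non-negative superposition of Poisson operators $\Pi_t$. The paper executes this more concretely. Instead of producing a single Hausdorff moment measure, it introduces two explicit operators
\[
(Q_{+}^{\mu,\nu} f)(x)=\frac{2}{\Gam(\mu/2)}\int_0^1(1-t^2)^{\mu/2-1}t^{\,n-\nu}(\Pi_t f)(x)\,dt,\qquad
(Q_{-}^{\mu,\nu} f)(x)=\frac{2}{\Gam(\mu/2)}\int_1^\infty(t^2-1)^{\mu/2-1}t^{\,1-\nu}(\Pi_{1/t} f)(x)\,dt,
\]
computes their Fourier--Laplace multipliers, and simply reads off the factorization $A_{\a,\b}=Q_{+}^{\a-\b,\,1-\b}\,Q_{-}^{\a-\b,\,1-\b}$. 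Positivity preservation follows because each $Q_\pm^{\mu,\nu}$ visibly has a non-negative kernel when $0<\mu<\nu<n$. This is exactly your double Beta-integral representation, grouped into two factors rather than collapsed to a single measure via $r=\sqrt{ts}$.

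Your last two paragraphs, however, are not substantiated. The device ``shift via $\Gam(x+1)=x\Gam(x)$'' does not obviously preserve signs: when $x=(1-\a)/2<0$ the extra factor $1/x$ is negative, and you give no mechanism by which the representing measure $\nu_{\a,\b}$ remains non-negative after absorbing it. Nor do you show how the condition $\a+\b<2$ ``is exactly what ensures'' convergence after such shifts---that assertion is stated, not proved. The paper invokes no shifting at all; it just checks the convergence condition for $Q_\pm$ and lets the factorization do the work. So your core argument matches the paper, but the speculative closing discussion should be replaced by the explicit $Q_\pm$ construction.
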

\begin{proof} The first statement follows from (\ref{55}). To prove
the second one, we consider  integral operators \bea\label{q1}\quad
(Q_{+}^{\mu,\nu} f)(x) &=& \frac{2}{\Gam (\mu/2)} \int_0^1
(1 - t^2)^{\mu/2 -1} (\Pi_t f)(x) \, t^{n-\nu} dt,\\
\label{q2}\quad (Q_{-}^{\mu,\nu} f)(x) &=& \frac{2}{\Gam (\mu/2)}
\int_1^\infty (t^2-1)^{\mu/2 -1} (\Pi_{1/t} f)(x) \, t^{1-\nu}
dt,\eea expressed through the Poisson integral (\ref{pu}). The
Fourier-Laplace multipliers of $Q_{+}^{\mu,\lambda}$ and
$Q_{-}^{\mu,\nu}$ are
 \be\label{mqa} \hat q_{+}^{\mu,\nu}(j)\!=
\!\frac{\Gam((j\!+\!n\!-\!\nu \!+\!1)/2)}{\Gam((j\!+\!n\!-\!\nu
\!+\!1\!+\!\mu)/2)}, \quad
 \hat  q_{-}^{\mu,\nu}(j)\!=\!
\frac{\Gam((j\!+\!\nu\! -\!\mu)/2)}{\Gam((j\!+\!\nu )/2)}.\ee They
can be easily computed by taking into account that $\Pi_t \sim t^j$
in the Fourier-Laplace terms. If $f\in L^1(S^{n-1})$ and  $0< \mu<
\nu<n$, then integrals (\ref{q1}) and (\ref{q2}) are absolutely
convergent  and obey $Q_{\pm}^{\mu,\nu} f\ge 0$ when   $f\ge 0$.
 Comparing (\ref{mqa}) and (\ref{ab}), we
obtain a factorization $A_{\a,\b }=Q_{+}^{\a-\b,1-\b}
Q_{-}^{\a-\b,1-\b}$ (set $\mu=\a-\b,\; \nu=1-\b$), which implies the
second statement of the lemma.
\end{proof}

It is convenient to introduce a special notation for the  spherical
Radon transform and the generalized cosine transform with orthogonal
argument. Assuming $\xi \in G_{n,i}$, we denote \be\label{prp}
(R_{n-i,\perp}f)(\xi)=(R_{n-i}f)(\xi^{\perp}), \qquad
(R^\a_{n-i,\perp}f)(\xi)=(R^\a_{n-i}f)(\xi^{\perp}).\ee
\begin{lemma}\label{l2} Let $f \in L^1(S^{n-1}), \quad Re\, \a >0; \quad \a \neq 1,3,5, \ldots \,
$. Then \be \label{con} (R_i M^\a f)(\xi)=c\,
 (R_{n-i,\perp}^{\a +i-1}
f)(\xi), \qquad \xi \in G_{n,i},\quad c=
\frac{2\pi^{(i-1)/2}}{\sig_{i-1}},\ee or (replace $i$ by
 $n-i$)\be \label{conn} (R_{n-i,\perp} M^\a f)(\xi) = \frac{2\pi^{(n-i-1)/2}}{\sig_{n-i-1}} \,
(R_i^{\a +n-i-1} f)(\xi).\ee If
 $f \in \D_e(S^{n-1})$, then (\ref{con}) and (\ref{conn}) extend to $Re\, \a \le 0$ by
 analytic continuation.
\end{lemma}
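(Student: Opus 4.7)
The plan is to prove the identity (\ref{con}) by unravelling $(R_i M^\a f)(\xi)$ as an iterated integral, applying Fubini, and then evaluating the inner one-dimensional integral explicitly via a Beta function. First, for $Re\,\a>0$ and $f\in L^1(S^{n-1})$, insert the definition of $M^\a$ into the spherical Radon transform to obtain
\[
(R_i M^\a f)(\xi)=\gam_n(\a)\int_{S^{n-1}\cap\xi}d_\xi\theta \int_{S^{n-1}} f(u)|\theta\cdot u|^{\a-1}\,du.
\]
Since $|\theta\cdot u|^{\a-1}$ is jointly integrable for $Re\,\a>0$ (its only singularity is $|t|^{\a-1}$ against the factor $(1-t^2)^{(\dim-3)/2}\,dt$ along the orthogonal slicing), Fubini's theorem applies and lets me write the expression as $\gam_n(\a)\int_{S^{n-1}}f(u)\,J(u,\xi)\,du$ with $J(u,\xi)=\int_{S^{n-1}\cap\xi}|\theta\cdot u|^{\a-1}d_\xi\theta$.

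Next I exploit the fact that $\theta\in\xi$ implies $\theta\cdot u=\theta\cdot\mathrm{Pr}_\xi u$. Writing $\mathrm{Pr}_\xi u=|\mathrm{Pr}_\xi u|\,v$ with $v\in S^{n-1}\cap\xi$ (the exceptional set $\mathrm{Pr}_\xi u=0$ has measure zero in $S^{n-1}$), I factor
\[
J(u,\xi)=|\mathrm{Pr}_\xi u|^{\a-1}\,C_i(\a),\qquad C_i(\a)=\int_{S^{n-1}\cap\xi}|\theta\cdot v|^{\a-1}d_\xi\theta,
\]
and the constant $C_i(\a)$ is independent of $v$ and of $\xi$ by $SO(\xi)$-invariance. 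Regarding $S^{n-1}\cap\xi$ as $S^{i-1}$ with its probability measure and parametrizing $\theta=tv+\sqrt{1-t^2}\,w$ with $w\in S^{i-2}$ perpendicular to $v$, this reduces to a standard Beta integral, yielding
\[
C_i(\a)=\frac{\sig_{i-2}}{\sig_{i-1}}\int_{-1}^1|t|^{\a-1}(1-t^2)^{(i-3)/2}\,dt=\frac{2\pi^{(i-1)/2}}{\sig_{i-1}}\cdot\frac{\Gamma(\a/2)}{\Gamma((\a+i-1)/2)}.
\]

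It now remains only to match constants. The right-hand side of (\ref{con}), expanded via the definition of $R^\a_{n-i}$ with orthogonal argument $\eta^\perp=\xi$ (so $|\mathrm{Pr}_{\eta^\perp}u|=|\mathrm{Pr}_\xi u|$ and the exponent $\a+i-1+(n-i)-n=\a-1$), equals
\[
c\,\gam_{n,n-i}(\a+i-1)\int_{S^{n-1}}|\mathrm{Pr}_\xi u|^{\a-1}f(u)\,du,
\]
and a routine $\Gamma$-function check confirms that $\gam_n(\a)\,C_i(\a)=c\,\gam_{n,n-i}(\a+i-1)$ with $c=2\pi^{(i-1)/2}/\sig_{i-1}$; the critical cancellation is that the $\Gamma(\a/2)$ in $C_i(\a)$ kills the $1/\Gamma(\a/2)$ in $\gam_n(\a)$, leaving a $\Gamma((1-\a)/2)/\Gamma((\a+i-1)/2)$ ratio matching $\gam_{n,n-i}(\a+i-1)$ exactly. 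Formula (\ref{conn}) is then immediate by swapping $i\leftrightarrow n-i$.

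Finally, for the extension to $Re\,\a\le 0$ with $f\in\D_e(S^{n-1})$: using the spherical-harmonic expansion $M^\a f=\sum m_{j,\a}f_{j,k}Y_{j,k}$ and the analytic continuation of the families $R^\a_i$, $R^\a_{n-i,\perp}$ furnished by (\ref{55}) and the meromorphic extensions reviewed in Section~3.1, both sides of (\ref{con}) are meromorphic functions of $\a\in\bbc$ (with the stated excluded poles) for each fixed $\xi$. Since they agree on the half-plane $Re\,\a>0$, they agree wherever defined by the identity theorem. The main obstacle is the careful bookkeeping of Gamma-factor constants to verify $\gam_n(\a)C_i(\a)=c\,\gam_{n,n-i}(\a+i-1)$; everything else is a clean Fubini-plus-Beta-function computation.
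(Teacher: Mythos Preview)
Your proof is correct and follows essentially the same route as the paper's own argument: write $(R_i M^\a f)(\xi)$ as an iterated integral, swap the order by Fubini, factor $|\theta\cdot u|=|\mathrm{Pr}_\xi u|\,|\theta\cdot v|$, evaluate the inner integral over $S^{n-1}\cap\xi$ as a Beta integral, and then invoke analytic continuation for $Re\,\a\le 0$. The only cosmetic difference is that you make the constant verification $\gam_n(\a)\,C_i(\a)=c\,\gam_{n,n-i}(\a+i-1)$ explicit, whereas the paper simply asserts ``This implies (\ref{con}).''
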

\begin{proof} For $Re\, \a >0$,
\[(R_i M^\a f)(\xi)=\gam_n(\a) \int_{S^{n-1}\cap\xi} \, d_{\xi}
u  \int_{S^{n-1}} f(\theta) |\theta\cdot u|^{\a-1} \,d\theta.\]
Since $|\theta\cdot u|=|\text{\rm Pr}_{\xi} \theta||v_\theta \cdot
u|$ for some $v_\theta \in S^{n-1}\cap\xi$, by changing the order of
integration, we obtain \[ (R_i M^\a f)(\xi)=\gam_n(\a)\,
\int_{S^{n-1}} f(\theta) |\text{\rm Pr}_{\xi} \theta|^{\a-1}
\,d\theta  \int_{S^{n-1}\cap\xi} |v_\theta \cdot u|^{\a-1} d_{\xi}
u.\] The inner integral is independent on $v_\theta$ and can be
easily evaluated: \bea \int_{S^{n-1}\cap\xi} |v_\theta \cdot
u|^{\a-1} d_{\xi} u&=&\frac{\sig_{i-2}}{\sig_{i-1}}\int_{-1}^1
|t|^{\a-1} (1-t^2)^{(i-3)/2}\, dt \nonumber\\
&=&\frac{2\pi^{(i-1)/2} \,\Gamma (\a/2)}{ \sig_{i-1}\,\Gamma(
(i+\a-1)/2)}.\nonumber\eea
 This implies (\ref{con}).
 \end{proof}

The following  statement is dual to Lemma \ref{l2}.
\begin{lemma}\label{l4} Let $\mu \in \M(G_{n,i}),\; \a \neq 1,3,5, \ldots \, $. Then \be
\label{cndmi} M^\a R_i^* \mu= c\, \overset * R{}^{\a +i-1}_{n-i}
\mu^\perp,\qquad c= 2\pi^{(i-1)/2}/\sig_{i-1},\ee in the
$\D'(S^{n-1})$-sense.
 If $Re\, \a
>0$ and $\mu$ is absolutely continuous with density $\vp \in
 L^1(G_{n,i})$, then
\be \label{cnd} M^\a R_i^* \vp= c\, \overset * R{}^{\a +i-1}_{n-i}
\vp^\perp\ee almost everywhere on $S^{n-1}$.
 If $\vp
\in \D(G_{n,i})$, then (\ref{cnd}) extends to all complex $\a\neq
1,3,5, \ldots $ by analytic continuation.
\end{lemma}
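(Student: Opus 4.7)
The plan is to derive Lemma \ref{l4} from Lemma \ref{l2} by duality, as the wording suggests. The key ingredients are: (i) the self-adjointness of $M^\a$ on $\D_e(S^{n-1})$, visible from either its spherical-harmonic multiplier (\ref{55}) or the symmetric kernel $|\theta\cdot u|^{\a-1}$ in (\ref{af}); (ii) the duality (\ref{dual}) pairing $R_i$ with $R_i^*$; (iii) the Fubini-type duality between $R_i^\a$ and $\overset * R_i{}^\a$, which is immediate from the symmetric kernel $|\text{Pr}_{\xi^\perp}\theta|^{\a+i-n}$ in (\ref{rka})--(\ref{rkad}); and (iv) the fact that the map $\xi\mapsto\xi^\perp$ is a measure-preserving isomorphism $G_{n,i}\to G_{n,n-i}$.

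First I would establish the smooth identity (\ref{cnd}) for $\vp\in\D(G_{n,i})$ with $Re\,\a>0$. For any $\om\in\D_e(S^{n-1})$, self-adjointness of $M^\a$ together with (\ref{dual}) yields
\[
(M^\a R_i^*\vp,\om)=(R_i^*\vp,M^\a\om)=(\vp,R_i M^\a\om).
\]
Lemma \ref{l2} then replaces $R_i M^\a\om$ by $c\,R^{\a+i-1}_{n-i,\perp}\om$, so the right-hand side equals $c\int_{G_{n,i}}\vp(\xi)(R^{\a+i-1}_{n-i}\om)(\xi^\perp)\,d\xi$. Substituting $\eta=\xi^\perp$ converts this into $c\int_{G_{n,n-i}}\vp^\perp(\eta)(R^{\a+i-1}_{n-i}\om)(\eta)\,d\eta$, and one final application of the Fubini duality between $R^{\a+i-1}_{n-i}$ and $\overset * R{}^{\a+i-1}_{n-i}$ produces $c\,(\overset * R{}^{\a+i-1}_{n-i}\vp^\perp,\om)$. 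Since for $\vp\in L^1(G_{n,i})$ and $Re\,\a>0$ both sides of (\ref{cnd}) are locally integrable functions on $S^{n-1}$, the distributional equality forces a.e.\ equality, giving the $L^1$ statement. Performing exactly the same calculation with $\vp\,d\xi$ replaced by a general Borel measure $\mu$ (which is legitimate because $R_i^*\mu$ is \emph{defined} by (\ref{fina})) yields (\ref{cndmi}) as an identity in $\D'(S^{n-1})$.

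Second step: analytic continuation in $\a$ for $\vp\in\D(G_{n,i})$. Both sides of (\ref{cnd}) are meromorphic functions of $\a$ on smooth data. The left-hand side is so via $M^\a R_i^*\vp=\sum m_{j,\a}(R_i^*\vp)_{j,k}Y_{j,k}$, whose multiplier (\ref{55}) has poles only at $\a=1,3,5,\dots$. The right-hand side is so through the normalizing constant $\gam_{n,n-i}(\a+i-1)$, whose Gamma factor $\Gam((1-\a)/2)$ has the identical pole set (while $1/\Gam((\a+i-1)/2)$ kills any spurious singularities), and the kernel integral admits the standard meromorphic continuation on $\D(G_{n,i})$-data. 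Coincidence on the non-empty half-plane $Re\,\a>0$ forces the identity for all $\a\neq 1,3,5,\dots$.

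The main obstacle is purely bookkeeping: verifying that the constant $c=2\pi^{(i-1)/2}/\sig_{i-1}$ inherited from Lemma \ref{l2} survives the two dualities and the change of variables without acquiring spurious factors, and, in the measure case, justifying the manipulations when $R_i^*\mu$ and $M^\a R_i^*\mu$ exist only distributionally and cannot be evaluated pointwise.
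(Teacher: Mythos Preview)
Your proposal is correct and follows essentially the same route as the paper: test against $\om\in\D_e(S^{n-1})$, use self-adjointness of $M^\a$ and the duality (\ref{dual}) to pass to $(\mu,R_iM^\a\om)$, invoke Lemma~\ref{l2} to replace this by $c\,(\mu,R^{\a+i-1}_{n-i,\perp}\om)=c\,(\mu^\perp,R^{\a+i-1}_{n-i}\om)$, and read off (\ref{cndmi}). The paper does this in one line directly for measures, while you first treat smooth $\vp$ and then lift to $\mu$, but the argument is the same.
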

\begin{proof}
Let  $\om \in \D_e (S^{n-1})$ (it suffices to consider only even
test functions). By (\ref{dual}) and (\ref{con}),
$$
(M^\a R_i^* \mu, \om)=(\mu,  R_i M^\a \om)=c\,(\mu,
 R_{n-i,\perp}^{\a +i-1} \om)=c\, (\mu^\perp, R_{n-i}^{\a +i-1} \om).$$
 This gives the  result.
\end{proof}

 The next statement  contains explicit representations of the right
inverse of the dual Radon transform  $R_i^*$ (note that $R_i^*$ is
non-injective on $\D (G_{n,i})$ when $1<i<n-1$).

\begin{lemma}\label{l34} Every  function $f\!\in \!\D_e (S^{n-1})$ is
represented as $f\!=\!R_i^* Af$, where $A:\D_e (S^{n-1}) \to
\D(G_{n,i})$, \be\label{crl} Af=c_1\,R_i^{1-i}f=c_2\,R_{n-i,\perp}
M^{2-n}f,\ee
$$
c_1=\frac{\pi^{(1-i)/2}\sig_{n-2}}{\sig_{n-i-1}}=\frac{\Gam((n-i)/2)}{\Gam((n-1)/2)},
\qquad c_2=\frac{\sig_{n-2}}{2\pi^{n/2-1}}.$$
\end{lemma}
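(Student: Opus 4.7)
My plan is to verify that the two formulas defining $Af$ agree and that $A$ maps into $\D(G_{n,i})$, then prove the inversion $f = R_i^* Af$ via the second representation, reducing everything to the Funk transform and the identity $M^\a M^\b = I$ from Lemma \ref{l1}.

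The equality $c_1 R_i^{1-i} f = c_2 R_{n-i,\perp} M^{2-n} f$ follows directly from equation (\ref{conn}) of Lemma \ref{l2} specialized to $\a = 2-n$ (so that $\a + n - i - 1 = 1-i$), combined with the arithmetic identity $c_2 \cdot 2\pi^{(n-i-1)/2}/\sig_{n-i-1} = c_1$; the alternative form $c_1 = \Gam((n-i)/2)/\Gam((n-1)/2)$ then comes out by substituting $\sig_k = 2\pi^{(k+1)/2}/\Gam((k+1)/2)$. Smoothness of $Af$ is automatic, since Lemma \ref{l1} shows $M^{2-n}$ is an automorphism of $\D_e(S^{n-1})$ and $R_{n-i}$ preserves $C^\infty$.

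The crux is the elementary identity $R_i^* R_{n-i,\perp} = M$ on $C_e(S^{n-1})$. Because $\theta \in \xi$ forces $\xi^\perp \subset \theta^\perp$, the composition $R_i^* R_{n-i,\perp} f(\theta)$ is a double average of $f$ over nested uniformly-distributed regions inside the great subsphere $\theta^\perp \cap S^{n-1}$; by $\mathrm{Stab}(\theta)$-invariance the resulting pushforward must be the normalized uniform measure on $\theta^\perp \cap S^{n-1}$, and evaluating on $f \equiv 1$ pins the proportionality constant at $1$. Combining these steps, $R_i^* Af = c_2 R_i^* R_{n-i,\perp} M^{2-n} f = c_2 M M^{2-n} f$. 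Writing $M = c_{n-1}^{-1} M^0$ via (\ref{lim1}) and applying Lemma \ref{l1} with $\a = 0$, $\b = 2-n$ (admissible, since neither value is an odd positive integer) gives $M^0 M^{2-n} f = f$, so $R_i^* Af = (c_2/c_{n-1}) f$; a small numerical check confirms $c_2 = c_{n-1} = \sig_{n-2}/(2\pi^{(n-2)/2})$, yielding $R_i^* Af = f$.

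The main obstacle is the identity $R_i^* R_{n-i,\perp} = M$: although the symmetry argument is geometrically transparent, one must be careful with the probability normalizations of $d_\xi \theta$, $d_\theta \xi$, and $d_u \theta$ used throughout the paper to confirm the proportionality constant is exactly $1$ rather than some unaccounted ratio of sphere volumes.
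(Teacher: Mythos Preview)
Your proof is correct and takes a genuinely different route from the paper's. The paper works with the \emph{first} representation $Af = c_1 R_i^{1-i}f$ and quotes an external result (Theorem~1.1 of \cite{R1}), namely $R_i^* R_i^\a f = c_1^{-1} Q^{\a+i-1} f$ for the spherical convolutions $Q^\a$ in (\ref{sin}); setting $\a = 1-i$ gives $R_i^* R_i^{1-i} f = c_1^{-1} Q^0 f = c_1^{-1} f$. You instead work with the \emph{second} representation $Af = c_2 R_{n-i,\perp} M^{2-n} f$, prove the factorization $R_i^* R_{n-i,\perp} = M$ by a $\mathrm{Stab}(\theta)$-invariance argument, and finish with $M^0 M^{2-n} = I$ from Lemma~\ref{l1}. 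Your approach is more self-contained and, in effect, anticipates Theorem~\ref{l3} (equation~(\ref{svv})), which the paper states and proves \emph{after} this lemma by an explicit computation in rotation coordinates; your symmetry argument is the same result in disguise, and your caveat about checking the normalization constant by evaluating on $f\equiv 1$ is exactly the right safeguard. The paper's route is shorter on the page but leans on \cite{R1}; yours stays entirely within the present paper's toolkit.
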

\begin{proof}  The coincidence of expressions in (\ref{crl})  follows
from (\ref{conn}). To prove the first equality, we invoke spherical
convolutions defined by analytic continuation of the integral
\be\label{sin} (Q^\a f)(\theta)\!=\!\frac{\sig_{n-1}\Gam
((n\!-\!1\!-\!\a)/2) } {2 \pi^{(n-1)/2} \Gam
(\a/2)}\!\int_{S^{n-1}}\!\!(1\!-\!|u \cdot \theta|^2)^{(\a-n+1)/2}
f(u) du,\ee $Re \,\a > 0, \quad \a-n \neq 0,2,4, \ldots \, $, so
that  $Q^0 f =f$ \cite{R1}. By Theorem 1.1 from \cite{R1}, $ R^*_i
R_i^\a f=c_1^{-1} Q^{\a+i-1}f$, and therefore (set $\a=1-i$), $R^*_i
R_i^{1-i} f=c_1^{-1}  f$, as desired.
\end{proof}

The next statement provides an intriguing factorization of the
 Minkowski-Funk transform in terms of Radon transforms associated to mutually orthogonal
 subspaces. This factorization can be useful in different
 occurrences.
\begin{theorem}\label{l3} For $f \in L^1(S^{n-1})$ and $0<i<n$,
\be\label{svv} Mf=R_i^* R_{n-i,\perp} f.\ee
\end{theorem}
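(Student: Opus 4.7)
The plan is to verify $(\ref{svv})$ pointwise by unfolding $R_i^*$ as a group integral via $(\ref{drt})$ and then applying Fubini; the whole identity will fall out of the fact that iterated averaging over the flag $\{(\xi,v):\theta\in\xi,\ v\in\xi^\perp\cap S^{n-1}\}$ is the same as direct averaging over $\theta^\perp\cap S^{n-1}$.

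Fix $\theta\in S^{n-1}$, choose $r_\theta\in SO(n)$ with $r_\theta e_n=\theta$, and let $p_0\ni e_n$ be the coordinate $i$-plane fixed in $(\ref{drt})$. Using $(r_\theta\gamma p_0)^\perp=r_\theta\gamma p_0^\perp$, substitution of $(\ref{rts})$ (applied to $\xi^\perp$) into $(\ref{drt})$ yields
$$(R_i^*R_{n-i,\perp}f)(\theta)=\int_{SO(n-1)}\int_{S^{n-1}\cap r_\theta\gamma p_0^\perp}f(v)\,dv\,d\gamma.$$
Substituting $v=r_\theta\gamma w$ with $w\in S^{n-1}\cap p_0^\perp$ and swapping the order of integration gives
$$(R_i^*R_{n-i,\perp}f)(\theta)=\int_{S^{n-1}\cap p_0^\perp}\left[\int_{SO(n-1)}f(r_\theta\gamma w)\,d\gamma\right]dw.$$

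The next step is to argue that the inner integral is independent of $w$. Since $e_n\in p_0$, we have $w\in S^{n-1}\cap e_n^\perp=S^{n-2}$, and $SO(n-1)=\mathrm{Stab}(e_n)$ acts transitively on $S^{n-2}$, so the $SO(n-1)$-average of $\gamma\mapsto f(r_\theta\gamma w)$ equals the spherical average $\int_{S^{n-2}}f(r_\theta u)\,du$. Because $r_\theta$ maps $e_n^\perp$ to $\theta^\perp$ and the uniform measure to the uniform measure, this common value is exactly $(Mf)(\theta)$ by $(\ref{mf})$. The outer $dw$-integral of this constant over the probability space $S^{n-1}\cap p_0^\perp$ then returns $(Mf)(\theta)$, proving $(\ref{svv})$.

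The only point requiring care is the simultaneous identification of invariant probability measures on $SO(n-1)$, $S^{n-1}\cap p_0^\perp$, $S^{n-2}$, and $S^{n-1}\cap\theta^\perp$; but each of these follows from uniqueness of the Haar probability measure on a transitive compact group action, and presents no real obstacle beyond careful normalization. (Equivalently, one can phrase the whole argument coordinate-freely by introducing the incidence variety $F_\theta=\{(\xi,v)\in G_{n,i}\times S^{n-1}:\theta\in\xi,\ v\in\xi^\perp\}$, noting it is a transitive $SO(n-1)$-space, and observing that both sides of $(\ref{svv})$ equal $\int_{F_\theta}f(v)\,d\mu_\theta$ after fibering $F_\theta$ over its first and second factors respectively.)
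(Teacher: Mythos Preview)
Your proof is correct and follows essentially the same route as the paper: unfold $R_i^*$ via the group integral $(\ref{drt})$, swap the order of integration, and use transitivity of $SO(n-1)$ on $S^{n-2}$ to identify the inner integral with $(Mf)(\theta)$. The paper's argument is terser (it simply asserts the inner integral is independent of $w$ and equals $(Mf)(\theta)$), but the substance is identical.
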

\begin{proof} By (\ref{drt}),
\bea (R_i^* R_{n-i,\perp} f)(\theta)&=&\intl_{SO(n-1)}
(R_{n-i,\perp} f)(r_\theta \gam \bbr^i)\, d\gam\nonumber \\
&=&\intl_{SO(n-1)} (R_{n-i}
f)(r_\theta \gam \bbr^{n-i})\, d\gam \nonumber \\
&=& \intl_{SO(n-1)}d\gam \intl_{S^{n-1}\cap r_\theta \gam
\bbr^{n-i}} f(v) \, dv \nonumber \\
&=& \intl_{S^{n-1}\cap \bbr^{n-i}} dw \intl_{SO(n-1)}f(r_\theta \gam
w) \,d\gam.\nonumber \eea The inner integral is independent on $w
\in S^{n-1}\cap \bbr^{n-i}$ and equals $(M f)(\theta)$. This gives
(\ref{svv}).
\end{proof}

\subsection{Restriction theorems}
Theorems of such type deal with traces of functions on lower
dimensional subspaces and are well known, for instance, in the
theory of function spaces. To the best of our knowledge,
 traces of functions represented by
 Radon transforms or, more generally, by the generalized cosine
 transforms , were not studied systematically and deserve particular
 attention, because they
provide analytic background
 to a series of results related to sections of star bodies; cf.
 \cite[Sec. 3.5]{R2}, \cite{FGW}. Given a subspace $\eta \in
 G_{n,m}$ and $k<m$, we denote by $G_k(\eta)$ the manifold of all
 $k$-dimensional subspaces of $\eta$.

\begin{theorem}\label{restrf} Let $f\in C_e(S^{n-1})$, $1\le
k<m<n$, $\lam \neq 0, -2, -4, \ldots \,$. If $Re \, \lam<k$, then
for every $\eta \in G_{n,m}$ and every $\xi \in G_k(\eta)$,
\be\label{yab1} (R_{n-k}^{k-\lam}
f)(\xi^\perp)=(R_{m-k}^{k-\lam}T_\eta ^\lam f)(\xi^\perp \cap
\eta),\ee  where \be\label{teta} (T_\eta^\lam f)(u)=\tilde
c\!\!\!\intl_{S^{n-1} \cap (\eta^\perp \oplus \bbr u )} \!\!\!f (w)
|u\cdot w|^{m-\lam-1}\, dw, \ee
$$  u \in S^{n-1} \cap
\eta, \qquad \tilde c=\pi^{(m-n)/2}\, \sig_{n-m}/2.$$ In particular
(let $\lam \to k$), \be\label{yab2}
(R_{n-k}f)(\xi^\perp)\!=\!c\,(R_{m-k}T_\eta^k f)(\xi^\perp \cap
\eta),\quad
c\!=\!\frac{\pi^{(n-m)/2}\,\sig_{m-k-1}}{\sig_{n-k-1}}.\ee
\end{theorem}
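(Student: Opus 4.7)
The plan is to expand both sides of (\ref{yab1}) in bi-spherical coordinates adapted to the orthogonal splitting $\bbr^n=\eta\oplus\eta^\perp$ and compare. First I would rewrite the left-hand side using (\ref{rka}): since $(\xi^\perp)^\perp=\xi$ and $(k-\lam)+(n-k)-n=-\lam$,
\[
(R_{n-k}^{k-\lam}f)(\xi^\perp)=\gam_{n,n-k}(k-\lam)\int_{S^{n-1}}|\text{\rm Pr}_\xi\theta|^{-\lam}\,f(\theta)\,d\theta.
\]
Writing $\theta=u\sin\psi+v\cos\psi$ with $u\in S^{n-1}\cap\eta$, $v\in S^{n-1}\cap\eta^\perp$, $\psi\in[0,\pi/2]$, exactly as in the proof of Lemma \ref{lar}, and using that $\xi\subset\eta$ forces $\text{\rm Pr}_\xi v=0$, one obtains the key factorization $|\text{\rm Pr}_\xi\theta|=\sin\psi\,|\text{\rm Pr}_\xi u|$. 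The integral then separates: $|\text{\rm Pr}_\xi u|^{-\lam}$ depends only on $u$, while the $(\psi,v)$-integral weights $f$ against $\sin^{m-1-\lam}\psi\cos^{n-m-1}\psi$ on $[0,\pi/2]\times(S^{n-1}\cap\eta^\perp)$.

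Next I would identify this inner integral with $(T_\eta^\lam f)(u)$. Parametrize the sphere $S^{n-1}\cap(\eta^\perp\oplus\bbr u)\cong S^{n-m}$ by $w=u\sin\psi+v\cos\psi$ with $\psi\in[-\pi/2,\pi/2]$, $v\in S^{n-1}\cap\eta^\perp$; then $|u\cdot w|^{m-\lam-1}=|\sin\psi|^{m-\lam-1}$ and the area element is $\cos^{n-m-1}\psi\,d\psi\,dv$. Since $f$ is even, the integration over $[-\pi/2,\pi/2]$ doubles the half-range integral and reproduces, up to a normalization constant, the inner integral from the previous step. Assembling these observations gives
\[
(R_{n-k}^{k-\lam}f)(\xi^\perp)=C_{n,m,k,\lam}\int_{S^{n-1}\cap\eta}|\text{\rm Pr}_\xi u|^{-\lam}\,(T_\eta^\lam f)(u)\,du
\]
for an explicit constant $C_{n,m,k,\lam}$. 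Applying (\ref{rka}) inside the $m$-dimensional ambient space $\eta$ with $i=m-k$, $\a=k-\lam$, and using $(\xi^\perp\cap\eta)^\perp\cap\eta=\xi$, the right-hand side of (\ref{yab1}) equals $\gam_{m,m-k}(k-\lam)$ times the very same spherical integral. Thus (\ref{yab1}) is equivalent to the numerical identity $C_{n,m,k,\lam}=\gam_{m,m-k}(k-\lam)$.

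This constant identification is the principal source of friction and really amounts to a Beta-Gamma bookkeeping exercise: one uses the explicit ratio $\gam_{n,n-k}(k-\lam)/\gam_{m,m-k}(k-\lam)=\sig_{n-1}\pi^{(m-n)/2}/\sig_{m-1}$, the bi-spherical Jacobian $\sig_{m-1}\sig_{n-m-1}/\sig_{n-1}$, and the prescribed value $\tilde c=\pi^{(m-n)/2}\sig_{n-m}/2$; the $\lam$-dependence cancels because the factor $\Gam(\lam/2)/\Gam((k-\lam)/2)$ appears identically in $\gam_{n,n-k}(k-\lam)$ and $\gam_{m,m-k}(k-\lam)$, so in the end $\tilde c$ is forced by purely dimensional data. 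All integrals converge absolutely in the stated range $Re\,\lam<k<m$: the kernel $|\text{\rm Pr}_\xi\cdot|^{-\lam}$ is integrable on $S^{n-1}$ exactly when $Re\,\lam<k$, and $|u\cdot w|^{m-\lam-1}$ is integrable on $S^{n-m}$ exactly when $Re\,\lam<m$, so no analytic continuation is needed to establish (\ref{yab1}).

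Finally, the special case (\ref{yab2}) follows by letting $\lam\to k$ in (\ref{yab1}) and invoking Lemma \ref{lar} on the spheres $S^{n-1}$ and $S^{n-1}\cap\eta$ respectively: $R_{n-k}^{k-\lam}f\to c_{n-k}R_{n-k}f$ and $R_{m-k}^{k-\lam}T_\eta^\lam f\to c_{m-k}R_{m-k}T_\eta^k f$, and the resulting ratio $c_{m-k}/c_{n-k}=\pi^{(n-m)/2}\sig_{m-k-1}/\sig_{n-k-1}$ is precisely the constant $c$ appearing in (\ref{yab2}).
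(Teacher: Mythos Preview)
Your argument is correct and follows essentially the same route as the paper: write the left-hand side via (\ref{rka}), pass to bi-spherical coordinates adapted to $\eta\oplus\eta^\perp$, factor $|\text{\rm Pr}_\xi\theta|=|\text{\rm Pr}_\xi u|\cdot(\text{angle factor})$, and recognize the inner $(\psi,v)$-integral as $(T_\eta^\lam f)(u)$ up to the constant $\tilde c$. The only cosmetic difference is that the paper uses $\theta=u\cos\psi+v\sin\psi$ (so the angle factor is $\cos\psi$) whereas you use $\theta=u\sin\psi+v\cos\psi$; this is an immaterial change of variable, and your constant bookkeeping and the passage to (\ref{yab2}) via Lemma \ref{lar} match the paper as well.
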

\begin{proof}  By (\ref{rka}),
$$(R_{n-k}^{k-\lam}f)(\xi^\perp)\!=\!\gam_{n,n-k}(k-\lam)\,
\int_{S^{n-1}} |\text{\rm Pr}_{\xi} \theta|^{-\lam} \, f(\theta) \,
d\theta. $$ We represent $\theta$ in bi-spherical coordinates as
 \be\label{bsc}\theta =u\cos\, \psi +v\sin \psi,\ee where $$u\in S^{n-1} \cap  \eta \sim S^{m-1}, \quad v \in
S^{n-1} \cap \eta^\perp \sim S^{n-m-1}, \quad 0\le \psi\le\pi/2,$$
$$
d\theta=c''\, \sin^{n-m-1}\psi \, \cos^{m-1}\psi \,d\psi du dv,
\quad c''=\sig_{m-1}\sig_{n-m-1}/\sig_{n-1}.$$ If $\xi \subset
\eta$, then $|\text{\rm Pr}_{\xi} \theta|=|\text{\rm Pr}_{\xi}
[\text{\rm Pr}_{\eta}\theta ]|=|\text{\rm Pr}_{\xi} u| \, \cos
\,\psi$, and therefore,
$$
(R_{n-k}^{k-\lam}f)(\xi^\perp)=\gam_{m,m-k}(k-\lam)\, \int_{S^{n-1}
\cap  \eta}|\text{\rm Pr}_{\xi} u|^{-\lam} (T_\eta^\lam f)(u)\,
du,$$ where\bea (T_\eta^\lam f)(u)&=&\frac{c''
\,\gam_{n,n-k}(k-\lam)}{\gam_{m,m-k}(k-\lam)} \int_0^{\pi/2
}\sin^{n-m-1}\psi \, \cos^{m-\lam-1}\psi \,d\psi\nonumber\\&\times&
\int_{S^{n-1} \cap
\eta^\perp} \!\!\!\!f(u\cos\, \psi \!+\!v\sin\, \psi)\, dv\nonumber\\
&=&\!\!\frac{\pi^{(m-n)/2}\, \sig_{n-m}}{2}\,\int_{S^{n-1} \cap
(\eta^\perp \oplus \bbr u )} \!\!\!\!f (w) |u\cdot w|^{m-\lam-1}\,
dw.\nonumber\eea Formula (\ref{yab2}) follows from (\ref{yab1}) by
(\ref{lim}).
\end{proof}
\begin{theorem}\label{restrm} Let $f\!\in \!D_e(S^{n-1})$, $\eta \!\in\! G_{n,m}$,
$1\!<\!m\!<\!n$. Suppose that $ f\!=\!M^{1-\lam} g$, where $Re
\,\lam <m$, $\lam \neq 0, -2, -4, \ldots\,$. Then the restriction of
$f$ onto $\eta$ is represented as $f=M^{1-\lam}_{S^{n-1} \cap \eta}
T_\eta ^\lam g$, where $T_\eta ^\lam$ has the form (\ref{teta}) and
$M^{1-\lam}_{S^{n-1} \cap \eta}$ denotes the same operator
$M^{1-\lam}$, but on the sphere $S^{n-1} \cap \eta$.
\end{theorem}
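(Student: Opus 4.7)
The plan is to specialize Theorem~\ref{restrf} to the case $k=1$ and then extend the resulting identity in $\lam$ by analytic continuation. First, apply Theorem~\ref{restrf} with $k=1$ to $g$ (which lies in $\D_e(S^{n-1}) \subset C_e(S^{n-1})$). For $u \in S^{n-1}\cap \eta$, set $\xi = \bbr u \in G_1(\eta)$; then $\xi^\perp = u^\perp$ is the hyperplane of $\bbr^n$ orthogonal to $u$, while $\xi^\perp \cap \eta$ is the hyperplane of $\eta$ orthogonal to $u$ within $\eta$. For $Re\,\lam<1$, $\lam \neq 0,-2,-4,\ldots\,$, Theorem~\ref{restrf} yields
\[
(R_{n-1}^{1-\lam} g)(u^\perp) = (R_{m-1}^{1-\lam} T_\eta^\lam g)(u^\perp \cap \eta).
\]
By (\ref{af}) the left-hand side equals $(M^{1-\lam} g)(u) = f(u)$, and the right-hand side is $(M^{1-\lam}_{S^{n-1}\cap \eta} T_\eta^\lam g)(u)$, because $R_{m-1}^{1-\lam}$ acting on a function on $S^{n-1}\cap \eta \cong S^{m-1}$ and evaluated at the hyperplane $u^\perp \cap \eta$ of $\eta$ is by definition the analogue of $M^{1-\lam}$ on this $(m-1)$-sphere. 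This establishes the theorem under the restriction $Re\,\lam<1$.

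To reach the full range $Re\,\lam<m$, $\lam\neq 0,-2,-4,\ldots\,$, I would extend by analytic continuation in $\lam$. By (\ref{55}) the families $\{M^{1-\lam}\}$ and $\{M^{1-\lam}_{S^{n-1}\cap\eta}\}$ have meromorphic extensions in $\lam$ with poles only at $\lam \in\{0,-2,-4,\ldots\}$, and the inner transform $T_\eta^\lam g$ is given by the absolutely convergent integral (\ref{teta}) throughout $Re\,\lam<m$. A bi-spherical coordinate change on $S^{n-1}\cap(\eta^\perp \oplus \bbr u)$, analogous to the one in the proof of Theorem~\ref{restrf}, shows that $T_\eta^\lam g$ is even and smooth in $u \in S^{n-1}\cap \eta$, and holomorphic in $\lam$ on this half-plane, so it belongs to $\D_e(S^{n-1}\cap\eta)$. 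Both sides of the desired identity are thus meromorphic in $\lam$ on $\{Re\,\lam<m\}\setminus\{0,-2,-4,\ldots\}$ and coincide on the non-empty open subset $Re\,\lam<1$, so by the identity principle they coincide throughout.

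The principal technical point is precisely this joint smoothness in $u$ and holomorphy in $\lam$ of $T_\eta^\lam g$ up to the boundary $Re\,\lam = m$, which is what permits $M^{1-\lam}_{S^{n-1}\cap\eta}$ to be applied to $T_\eta^\lam g$ as a meromorphic family of operators in the sense of Lemma~\ref{l1}. It reduces to convergence and smooth parameter-dependence of the elementary integral $\int_0^\pi |\cos\psi|^{m-\lam-1}\sin^{n-m-1}\psi\,d\psi$ arising from the bi-spherical decomposition, which is routine and already implicit in the derivation of $T_\eta^\lam$ in the proof of Theorem~\ref{restrf}.
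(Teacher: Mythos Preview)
Your proposal is correct and follows essentially the same approach as the paper: specialize Theorem~\ref{restrf} to $k=1$ to obtain the identity for $Re\,\lam<1$, then extend to $Re\,\lam<m$ by analytic continuation. The paper's proof is terser, but your added justification of the smoothness and holomorphy of $T_\eta^\lam g$ needed for the continuation step is exactly the content implicit in the paper's one-line appeal to analytic continuation.
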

\begin{proof} For $Re \,\lam <1$, the statement is a particular case
of Theorem \ref{restrf} (set $k=1$). For other values of $\lam$, the
result follows by analytic continuation.
\end{proof}
\begin{remark}\label{neho} The restriction $\lam \neq 0, -2, -4,
\ldots\,$ in Theorems \ref{restrf} and \ref{restrm} is caused by the
Gamma function $\Gam (\lam/2)$ in the numerator of the corresponding
normalizing factor. It is evident from the proof, that both theorems
remain true also for $\lam =  -2\ell, \ell \in \bbn$, if we remove
 the normalizing factor. Then  $M^{1-\lam}$ in Theorem \ref{restrm}
 will be substituted for
$\tilde M^{1+2\ell}$; see (\ref{afu}).
\end{remark}

We will need the following generalization of Theorem \ref{restrm}.
\begin{theorem}\label{restrmi} Let
$f\!\in \!C_e(S^{n-1})$, $\mu\in \M_{e+}(S^{n-1})$, and let $\eta
\!\in\! G_{n,m}$, $1\!<\!m\!<\!n$. Suppose that $ f\!=\!M^{1-\lam}
\mu$, if $\lam <m$, $\lam \neq -2\ell, \ell \in \bbn$, and $
f\!=\!\tilde M^{1+2\ell} \mu$, if $\lam = -2\ell$.

\noindent {\rm (i)} There is a measure $\nu\in \M_{e+}(S^{n-1} \cap
\eta)$ such that
 the restriction of $f$ onto $S^{n-1} \cap \eta$ is represented as
$f=M^{1-\lam}_{S^{n-1} \cap \eta} \nu$.

\noindent {\rm (ii)} If $d\mu (\theta)=g(\theta)d\theta$, $g \in
C_e(S^{n-1})$, then {\rm (i)} holds with $d\nu (\theta)=(T_\eta
^\lam g)(\theta)d\theta$, where $T_\eta ^\lam g$ has the form
(\ref{teta}).

\noindent {\rm (iii)} If $\lam =  -2\ell, \ell \in \bbn$, then {\rm
(i)} and {\rm (ii)} hold with $ M^{1-\lam}_{S^{n-1} \cap \eta}$
substituted for  $\tilde M^{1+2\ell}_{S^{n-1} \cap \eta}$.
\end{theorem}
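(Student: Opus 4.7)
The plan is to prove (ii) first as a density extension of Theorem~\ref{restrm}, then to deduce (i) from (ii) via smoothing and weak-$*$ compactness, and finally to obtain (iii) by the same reasoning with $M^{1-\lam}$ replaced by $\tilde M^{1+2\ell}$ in accordance with Remark~\ref{neho}.

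For (ii), I would uniformly approximate $g\in C_e(S^{n-1})$ by $g_\varepsilon\in\D_e(S^{n-1})$ obtained by convolving with a smooth even zonal approximate identity $K_\varepsilon$, then apply Theorem~\ref{restrm} to each $g_\varepsilon$ and pass to the limit. Two ingredients make this routine: the kernel $|u\cdot w|^{m-\lam-1}$ in (\ref{teta}) is integrable on the $(n-m)$-sphere $S^{n-1}\cap(\eta^\perp\oplus\bbr u)$ exactly when $\lam<m$, so $T_\eta^\lam:C_e(S^{n-1})\to C_e(S^{n-1}\cap\eta)$ is bounded and $T_\eta^\lam g_\varepsilon\to T_\eta^\lam g$ uniformly; and, since $M^{1-\lam}$ is a zonal convolution that commutes with $K_\varepsilon$, one has $M^{1-\lam}g_\varepsilon=K_\varepsilon*f$, which converges uniformly to $f$ by continuity of $f$. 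Both sides of the Theorem~\ref{restrm} identity then converge in $\D'(S^{n-1}\cap\eta)$, yielding the $C_e$-version. Non-negativity of $T_\eta^\lam g$ for $g\ge0$ is immediate from non-negativity of the kernel.

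For (i), I would set $g_\varepsilon=K_\varepsilon*\mu$, so that $g_\varepsilon\in C^\infty_e(S^{n-1})$ with $g_\varepsilon\ge 0$ and $g_\varepsilon\,d\theta\to\mu$ weakly. Writing $d\nu_\varepsilon=(T_\eta^\lam g_\varepsilon)\,d\theta\in\M_{e+}(S^{n-1}\cap\eta)$, part (ii) gives
\[(M^{1-\lam}g_\varepsilon)\big|_{S^{n-1}\cap\eta}=M^{1-\lam}_{S^{n-1}\cap\eta}\nu_\varepsilon,\]
and the same commutativity as before yields $M^{1-\lam}g_\varepsilon=K_\varepsilon*f\to f$ uniformly on $S^{n-1}$, so the left-hand side converges uniformly to $f|_{S^{n-1}\cap\eta}$.

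The main obstacle, and the step I expect to require the most care, is a uniform bound on the total mass $\|\nu_\varepsilon\|$. I would pair the displayed identity with the constant function $\mathbf{1}$ on $S^{n-1}\cap\eta$: the left side converges to $\int_{S^{n-1}\cap\eta}f(u)\,du<\infty$, while the right side equals $c(m,\lam)\,\|\nu_\varepsilon\|$ with $c(m,\lam)=M^{1-\lam}_{S^{n-1}\cap\eta}\mathbf{1}$ a constant by rotational invariance. Formula (\ref{55}) evaluated at $j=0$ on $S^{m-1}$ gives $c(m,\lam)=\Gam(\lam/2)/\Gam((m-\lam)/2)$, which is finite and non-zero precisely under the hypotheses $\lam<m$ and $\lam\ne 0,-2,-4,\ldots$. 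Banach--Alaoglu yields a weak-$*$ convergent subsequence $\nu_{\varepsilon_k}\to\nu\in\M_{e+}(S^{n-1}\cap\eta)$; testing against any $\om\in\D_e(S^{n-1}\cap\eta)$---whose image $M^{1-\lam}_{S^{n-1}\cap\eta}\om$ is smooth, since the multipliers $m_{j,1-\lam}$ grow only polynomially---then delivers $f|_{S^{n-1}\cap\eta}=M^{1-\lam}_{S^{n-1}\cap\eta}\nu$ in $\D'$, completing (i). For (iii) I would run the identical argument with $M^{1-\lam}$ replaced by $\tilde M^{1+2\ell}$ throughout; the otherwise singular factor $\Gam(\lam/2)$ simply disappears in $\tilde M$, and the analogous constant $\int_{S^{m-1}}|\theta\cdot u|^{2\ell}\,d\theta$ remains strictly positive and finite, so the argument carries over verbatim.
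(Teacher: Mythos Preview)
Your argument is correct. The overall smoothing strategy matches the paper's: regularize $\mu$ by a zonal approximate identity (the paper uses the Poisson integral $\Pi_t$), apply the smooth restriction Theorem~\ref{restrm} to the regularized data, and pass to the limit using that $M^{1-\lam}$ commutes with the smoothing. The place where you genuinely diverge from the paper is in producing the limit measure $\nu$.

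The paper does not bound $\|\nu_t\|$. Instead it first shows that $\lim_t(\nu_t, M^{1-\lam}_{S^{n-1}\cap\eta}\om)$ exists for every $\om\in\D_e(S^{n-1}\cap\eta)$ (this comes for free from $\Pi_t f\to f$), and then uses that $M^{1-\lam}_{S^{n-1}\cap\eta}$ is an \emph{automorphism} of $\D_e(S^{n-1}\cap\eta)$ (Lemma~\ref{l1}) to substitute $\om\mapsto M^{1-m+\lam}_{S^{n-1}\cap\eta}\om$ and conclude that $\lim_t(\nu_t,\om)$ exists for every test function; weak completeness of $\D'$ plus Theorem~\ref{sm} then gives the measure. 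Because $\tilde M^{1+2\ell}$ is \emph{not} an automorphism, the paper cannot run this in Step~2 and instead computes an explicit adjoint ${\overset{*}T}{}^{-2\ell}_\eta$ to show directly that $(T_\eta^{-2\ell}\Pi_t\mu,\om)$ converges.

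Your route---test against $\mathbf 1$, read off $\|\nu_\varepsilon\|$ from the $j=0$ multiplier $\Gam(\lam/2)/\Gam((m-\lam)/2)$, and invoke Banach--Alaoglu---uses only that this single multiplier is finite and nonzero, not the full isomorphism. That is why your part~(iii) goes through ``verbatim'': the constant $\int_{S^{m-1}}|\theta\cdot u|^{2\ell}\,d\theta$ is manifestly positive, and no separate adjoint computation is needed. The trade-off is that you pass to a subsequence, whereas the paper obtains convergence of the full net; for the existence statement being proved this costs nothing. One small point worth making explicit: your final identity $f|_\eta=M^{1-\lam}_{S^{n-1}\cap\eta}\nu$ is verified only against even $\om$, but since both sides are even this extends to all test functions.
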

\begin{proof} STEP 1. Let first $\lam <m$, $\lam \neq 0, -2, -4, \ldots\,$.
We invoke the Poisson integral (\ref{pu}) so that
$$\Pi_t f=\Pi_t M^{1-\lam} \mu=M^{1-\lam} g_t, \qquad g_t=\Pi_t
\mu\in \D_e(S^{n-1}), \quad t\in (0,1).$$ Since $f$ is continuous,
then $\Pi_t f $ converges to $f$ as $t \to 0$ uniformly on
$S^{n-1}$, and therefore, uniformly on $S^{n-1} \cap \eta$. Hence,
for any test function  $\om \in \D(S^{n-1} \cap \eta)$, owing to
Theorem \ref{restrm}, we have \bea (f, \om)&=&\lim\limits_{t \to 0}
(\Pi_t f, \om)=\lim\limits_{t
\to 0} (M^{1-\lam} g_t, \om)\nonumber\\
&=&\lim\limits_{t \to 0} (M^{1-\lam}_{S^{n-1} \cap \eta} T_\eta
^\lam g_t,\om)=\lim\limits_{t \to 0} (
T_\eta ^\lam g_t, M^{1-\lam}_{S^{n-1} \cap \eta}\om)\nonumber\\
&=&\label{pora}\lim\limits_{t \to 0} (\nu_t,M^{1-\lam}_{S^{n-1} \cap
\eta}\om), \qquad \nu_t=T_\eta ^\lam g_t. \eea Thus, $\lim\limits_{t
\to 0} (\nu_t,M^{1-\lam}_{S^{n-1} \cap \eta}\om)$ exists for every
$\om \!\in  \!\D(S^{n-1}  \!\cap  \!\eta)$. If $\om$ is even, i.e.,
$\om \in \D_e(S^{n-1} \cap \eta)$, then, by Lemma \ref{l1}, we can
replace $\om$ by $M^{1-m+\lam}_{S^{n-1} \cap \eta}\om$ and conclude
that the limit $\lim\limits_{t \to 0} (\nu_t, \om)$ is well-defined
for every $\om \in \D_e(S^{n-1} \cap \eta)$. Since $\nu_t=T_\eta
^\lam \Pi_t \mu$ is an even function and the generic  test function
$\om\in \D(S^{n-1} \cap \eta)$ can be represented as $\om_+ +
\om_-$, where $\om_{\pm}$ are even and odd, respectively, it follows
that the limit $\lim\limits_{t \to 0} (\nu_t, \om)=\lim\limits_{t
\to 0} (\nu_t, \om_+)$ is well-defined for every $\om \in \D(S^{n-1}
\cap \eta)$ (not only for even $\om$, as stated above).
 Since
$\D'(S^{n-1}\cap \eta)$ is weakly complete, there is an even
distribution $\nu$ in $\D'(S^{n-1}\cap \eta)$ so that
$$(\nu,\om)=\lim\limits_{t \to 0} (\nu_t, \om), \qquad \om\in \D(S^{n-1} \cap \eta).$$ Furthermore, since
$(\nu_t, \om)=(T_\eta ^\lam \Pi_t \mu, \om)$ is non-negative for
every  non-negative $\om \in \D(S^{n-1} \cap \eta)$ and every $t\in
(0,1)$, then $\nu$ is a positive distribution and, by Theorem \ref
{sm}, $\nu$ is a measure in $\M_{e+}(S^{n-1}\cap \eta)$. Thus, by
(\ref {pora}),  $ (f, \om)=\lim\limits_{t \to 0}
(\nu_t,M^{1-\lam}_{S^{n-1} \cap \eta}\om)=(\nu,M^{1-\lam}_{S^{n-1}
\cap \eta}\om)$, which means that $f=M^{1-\lam}_{S^{n-1} \cap
\eta}\nu$, as desired.

If $d\mu (\theta)=g(\theta)d\theta$, $g \in C_e(S^{n-1})$, then
$\nu_t=T_\eta ^\lam \Pi_t g$ tends to $ T_\eta ^\lam
 g$ uniformly on $S^{n-1} \cap \eta$ as $t\to 0$. Hence, by
(\ref {pora}),  $(f, \om)= (T_\eta ^\lam  g,M^{1-\lam}_{S^{n-1} \cap
\eta}\om)$, which means $f=M^{1-\lam}_{S^{n-1} \cap \eta}T_\eta
^\lam g$.

STEP 2. Consider the  case $\lam =  -2\ell, \, \ell \in \bbn$, when
$f=\tilde M^{1+2\ell} \mu$, $\mu\in \M_{e+}(S^{n-1})$, and the
operator $T_\eta^{\lam} = T_\eta^{-2\ell}$ has the form
 $$(T_\eta^{-2\ell} h)(u)=\tilde
c\!\!\!\intl_{S^{n-1} \cap (\eta^\perp \oplus \bbr u )} \!\!\!
|u\cdot w|^{m+2\ell-1}\, h(w)\,dw,$$ cf. (\ref {teta}). For any
functions $h\in C(S^{n-1})$ and $\om \in C(S^{n-1} \cap \eta)$,
\be\label{fno}(T_\eta^{-2\ell}h, \om)=(h,{\overset *
T}{}^{-2\ell}_\eta  \om),\ee where
$$({\overset * T}{}^{-2\ell}_\eta \om)(\theta)\!=\!\frac{\Gam
(m/2)}{2\Gam (n/2)}\, \om \Big (\frac {{\rm Pr}_{\eta}\theta }{|{\rm
Pr}_{\eta}\theta |}\Big )\,|{\rm Pr}_{\eta}\theta |^{2\ell} \in C(S^{n-1}).$$ Indeed, using bi-spherical coordinates (see
(\ref{bsc})), we have \bea (T_\eta^{-2\ell}h, \om)&=&\tilde
c\!\!\!\intl_{S^{n-1} \cap \eta}\!\!\!\om (u)du\intl_{S^{n-1} \cap
(\eta^\perp \oplus \bbr u )}
\!\!\!h (w) |u\cdot w|^{m+2\ell-1}\, dw\nonumber\\
&=&\frac{\tilde c\,\sig_{n-m-1}}{\sig_{n-m}}\!\!\!\intl_{S^{n-1}
\cap \eta}\!\!\!\om
(u)du\intl_0^{\pi/2 }\sin^{n-m-1}\psi \, \cos^{m+2\ell-1}\psi \,d\psi\nonumber\\
&\times& \intl_{S^{n-1} \cap
\eta^\perp} \!\!\!\!h(u\cos\, \psi \!+\!v\sin\, \psi)\, dv\nonumber\\
&=&\frac{\tilde c\,\sig_{n-m-1}}{c''\,\sig_{n-m}}\intl_{S^{n-1}}
h(\theta)\, \om \Big (\frac {{\rm Pr}_{\eta}\theta }{|{\rm
Pr}_{\eta}\theta |}\Big )\,|{\rm Pr}_{\eta}\theta |^{2\ell}\,
d\theta=(h,{\overset * T}{}^{-2\ell}_\eta  \om).\nonumber\eea Let
$h=\Pi_t \mu$ and observe that the limit $\lim\limits_{t \to
0}(T_\eta^{-2\ell}\Pi_t \mu, \om)$ exists, because, by (\ref{fno}),
$ (T_\eta^{-2\ell}\Pi_t \mu, \om)=(\Pi_t \mu,{\overset *
T}{}^{-2\ell}_\eta  \om) \to (\mu,{\overset * T}{}^{-2\ell}_\eta
\om)$. Note that $(T_\eta^{-2\ell}\Pi_t \mu, \om)\ge 0$ for any
 non-negative $\om \in C(S^{n-1} \cap \eta)$. Applying the standard
 completeness argument (as in Step 1), we conclude, that there is a measure
$\nu \in \M_+(S^{n-1} \cap \eta)$ such that $$\lim\limits_{t \to
0}(T_\eta^{-2\ell}\Pi_t \mu, \om)=(\nu, \om)\quad \forall \om \in
C(S^{n-1} \cap \eta).$$ Using this equality, for $f=\tilde
M^{1+2\ell} \mu$ we obtain
 \bea (f, \om)&=&\lim\limits_{t \to 0}
(\Pi_t f, \om)=\lim\limits_{t \to 0} ( \Pi_t \tilde M^{1+2\ell} \mu,
\om)=\lim\limits_{t
\to 0} (\tilde M^{1+2\ell} \Pi_t \mu, \om)\nonumber\\
&& \text {\rm (use Theorem \ref{restrm} and Remark \ref{neho})}\nonumber\\
&=&\lim\limits_{t \to 0} (\tilde M^{1+2\ell}_{S^{n-1} \cap \eta}
T_\eta ^{-2\ell} \Pi_t \mu, \om)=\lim\limits_{t \to 0} (T_\eta
^{-2\ell} \Pi_t \mu, \tilde M^{1+2\ell}_{S^{n-1} \cap \eta}\om)\nonumber\\
&=&(\nu, \tilde M^{1+2\ell}_{S^{n-1} \cap \eta}\om).\nonumber\eea
This gives the result.

If $d\mu (\theta)=g(\theta)d\theta$, $g \in C_e(S^{n-1})$, then, by
Theorem \ref{restrm} and Remark \ref{neho}, for $\theta \in S^{n-1}
\cap \eta$ we have $$(\Pi_t f)(\theta)=(\Pi_t \tilde M^{1+2\ell}
g)(\theta)=(\tilde M^{1+2\ell}\Pi_t g)(\theta)= (\tilde
M^{1+2\ell}_{S^{n-1} \cap \eta}T_\eta ^{-2\ell} \Pi_t g)(\theta).$$
Owing to continuity of the operators $\tilde M^{1+2\ell}_{S^{n-1}
\cap \eta}$, $T_\eta ^{-2\ell}$, and $ \Pi_t$ in the relevant spaces
of continuous functions, by passing to the limit as $t \to 0$, we
obtain $f(\theta)=(\tilde M^{1+2\ell}_{S^{n-1} \cap \eta}T_\eta
^{-2\ell}  g)(\theta)$, $\theta \in S^{n-1} \cap \eta$, as desired.
\end{proof}

\section{Positive Definite Homogeneous Distributions}

 We remind some known facts; see, e.g.,
  \cite {GS}, \cite {Le}. Let $\S(\bbr^n)$ be the  Schwartz
 space of rapidly decreasing $C^\infty$-functions on $\bbr^n$
and $\S'(\bbr^n)$ its dual.  The Fourier transform of  $F \in
\S'(\bbr^n)$ is defined by
$$ \lng\hat F, \hat \phi\rng= (2\pi)^{n}\lng F, \phi\rng, \quad \hat \phi(y)=
\int_{\bbr^{n}} \phi(x) \, e^{ix \cdot y} \, dx, \quad \phi \in
\S(\bbr^n).$$ A distribution $F \in \S'(\bbr^n)$ is homogeneous of
degree $\lam \in \bbc$  if for any $\phi \in \S(\bbr^n)$ and any $a
>0$, $\lng F, \phi (x/a)\rng =a^{\lam +n}\,\lng F, \phi\rng$. Homogeneous
distributions on $\bbr^n$ are intimately connected with
distributions on $S^{n-1}$. Let first $f \in L^1(S^{n-1})$, $(E_\lam
f)(x)=|x|^\lam f (x/|x|)$, $x \in \bbr^n \setminus \{0\}$. The
operator $E_\lam$ generates a meromorphic $\S'$-distribution
$$ \lng E_\lam f, \phi \rng \!
=a.c. \int_0^\infty\! r^{\lam +n-1} u(r)dr, \quad
u(r)=\int_{S^{n-1}} f(\theta)\overline{\phi(r\theta)}d\theta,$$
where ``$a.c.$'' denotes analytic continuation in the
$\lam$-variable. The distribution $E_\lam f$ is regular if $Re
\,\lam >-n$ and admits simple poles at $\lam=-n, -n-1, \ldots $. The
above definition extends to all distributions $f\in\D'(S^{n-1})$ by
the formula
$$ \lng E_\lam f, \phi \rng=a.c.
\int_0^\infty r^{\lam +n-1} u(r)dr,  \quad u(r)=(f,
\phi(r\theta)),\footnote {\rm Here and on, different notations
$\lng \cdot, \cdot \rng$ and $(\cdot,\cdot)$ are used for distributions on $\bbr^n$ and $S^{n-1}$, respectively.}
$$ and the map $E_\lam : \D'(S^{n-1})\to \S'(\bbr^n)$
is  weakly continuous. If $f$ is orthogonal to all spherical
harmonics of degree $j$, then the derivative $u^{(j)}(r)$ equals
zero at $r=0$ and the pole at $\lam=-n-j$ is removable. In
particular, if  $f$ is an even distribution, i.e., $(f, \vp)=(f,
\vp^-), \; \vp^- (\theta)=\vp (-\theta) \quad \forall \vp \in \D
(S^{n-1})$, then the only possible poles  of $E_\lam f$ are $-n,
-n-2, -n-4, \dots $.

The Fourier transform of homogeneous distributions was extensively
studied by many authors; see \cite{Sa2} and references therein. We
restrict our consideration to even distributions, when  the operator
family $\{M^\a \}$ defined by (\ref{af}) naturally arises thanks to
the formula \be \label{cf} [E_{1-n-\a} f]^\wedge=2^{1-\a}
\pi^{n/2}\,E_{\a-1}M^\a f. \ee This formula amounts to Semyanistyi
\cite{Se}. If $f\in \D_e(S^{n-1})$, then (\ref{cf}) holds pointwise
for $0<Re \, \a <1$ (see, e.g., Lemma 3.3 in \cite{R3} ) and extends
in the $S'$-sense to all $\a\in \bbc$ satisfying \be\label{alf}\a
\notin \{1,3, 5, \ldots \}
 \cup \{1-n, -n-1, -n-3, \ldots \}.\ee
Since $\D_e(S^{n-1})$ is dense in $\D'_e(S^{n-1})$ and the maps
 $E_{1-n-\a}$ and $ E_{\a-1}$ are weakly
 continuous from $ \D'_e(S^{n-1})$ to $\S'(\bbr^n)$, then (\ref{cf}) extends to all $f \in \D'_e(S^{n-1})$.

Regarding the cases excluded in (\ref{alf}), we note that if
$\a=1+2\ell$ for some $\ell=0,1, \ldots$, then (\ref{cf}) is
meaningful if and only if $f$ is orthogonal to all spherical
harmonics of degree $2\ell$. If $\a=1-n-2\ell$ for some $\ell=0,1,
\ldots$, then, according to the spherical harmonic decomposition
$f=\sum_{j,k} f_{j,k} Y_{j,k}, \; j$ even, formula (\ref{cf}) is
substituted for the following: \bea \qquad &&[E_{2\ell} f]^\wedge
(\xi)=(2\pi)^n\sum_{j\le 2\ell} \sum_k f_{j,k} (-\Del)^{\ell
-j/2}Y_{j,k}(i\partial)\, \del (\xi)\\&{}&+2^{n+2\ell}
\pi^{n/2}\,E_{-n-2\ell}M^{1-n-2\ell}\Big [ f-\sum_{j\le 2\ell}\sum_k
 f_{j,k} Y_{j,k}\Big ](\xi), \nonumber\eea where $-\Del$ is the Laplace operator,
 $\partial=(\partial/\partial \xi_1, \ldots , \partial/\partial \xi_n
)$, and $\del (\xi)$ is the delta function. It is worth noting that
for $\a=1,3, 5, \ldots$, the distribution $[E_{1-n-\a} f]^\wedge$
can  also  be understood in the regularized sense without any
orthogonality assumptions. However, such regularization does not
preserve  homogeneity; see \cite{Sa0}, \cite{Sa2}.

Our main concern is positivity  and positive definiteness of even
homogeneous distributions.  The reader is referred to \cite{GV}  for
the general theory. A distribution $F\in \S'(\bbr^n)$ is {\it
positive} if $\lng F, \phi\rng \ge 0$ for all non-negative
  $\phi\in \S(\bbr^n)$. A similar definition holds for distributions on the sphere and on $\bbr^n \setminus \{0\}$.
 A distribution
 $F \in \S'(\bbr^n)$ is
 {\it positive definite} if $\hat F$ is positive.
For our  purposes, it is important to know,
 which even homogeneous distributions are
positive definite. Let us rewrite  (\ref{cf}) and (\ref{alf}) with
$1-n-\a$ replaced by $-\lam$. We have \be \label{cfl} [E_{-\lam}
f]^\wedge=2^{n-\lam} \pi^{n/2}\,E_{\lam -n}M^{1+\lam -n} f, \ee
\be\label{alfl}\lam\notin \Lam_0, \quad \Lam_0=\{n, n+2, n+4 \ldots
\}
 \cup \{0, -2, -4, \ldots \}.\ee
\begin{theorem}\label{pr0} Let $\lam\in \bbr \setminus  \Lam_0$,  $f \in \D'_e
(S^{n-1})$.

{\rm (i)} If $\lam< 0$ and $E_{-\lam} f$ is a positive definite
distribution, then $f=0$.

{\rm (ii)} For all   $\lam\in \bbr \setminus  \Lam_0$,  the
following statements are equivalent:

 {\rm (a)} $[E_{-\lam}f]^\wedge$ is a positive
distribution on $\rn \setminus \{0\}$ $($for $\lam> 0$, this can be
replaced by ``$E_{-\lam}f$ is a positive definite distribution on
$\rn$''$)$;

 {\rm (b)} $M^{1+\lam -n}f\in \M_{e+}(S^{n-1})$;

 {\rm (c)} $f=M^{1-\lam} \mu$ for some measure $\mu\in \M_{e+}(S^{n-1})$.

\noindent Furthermore, for any real $\lam \neq 0, -2, -4, \ldots$,
and any  $i =1,2, \ldots , n-1$, {\rm (c)} is equivalent to

{\rm (d)} $R_i f =R^{i-\lam}_{n-i, \perp} \mu$ for some measure
$\mu\in \M_{e+}(S^{n-1})$.
\end{theorem}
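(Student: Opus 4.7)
The proof plan hinges on two earlier identities. First, the Semyanistyi formula (\ref{cfl}) rewrites $[E_{-\lam}f]^\wedge$ as a nonzero constant multiple of $E_{\lam-n}g$ with $g:=M^{1+\lam-n}f\in\D'_e(S^{n-1})$. Second, Lemma \ref{l1} gives $M^{1-\lam}M^{1+\lam-n}=I$ on $\D'_e(S^{n-1})$, since the indices $1-\lam$ and $1+\lam-n$ sum to $2-n$ and neither is an odd positive integer as long as $\lam\notin\Lam_0$. These two facts will supply $(a)\Leftrightarrow(b)$ and $(b)\Leftrightarrow(c)$, respectively.

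For $(a)\Leftrightarrow(b)$, on $\bbr^n\setminus\{0\}$ the distribution $E_{\lam-n}g$ is simply the homogeneous extension of $g$ by the factor $|x|^{\lam-n}$, so positivity of $[E_{-\lam}f]^\wedge$ there reduces to positivity of $g$ as a distribution on $S^{n-1}$; L.\ Schwartz's theorem then identifies $g$ with a nonnegative Borel measure. When $\lam>0$ the exponent $\lam-n$ exceeds $-n$, so $E_{\lam-n}g$ is locally integrable near the origin, and positivity on $\bbr^n\setminus\{0\}$ automatically upgrades to positive definiteness on all of $\bbr^n$, justifying the parenthetical remark in $(a)$. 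The equivalence $(b)\Leftrightarrow(c)$ is then an immediate application of the inversion from Lemma \ref{l1}: $g=M^{1+\lam-n}f\in\M_{e+}(S^{n-1})$ iff $f=M^{1-\lam}g$.

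Part (i) uses the same machinery but exploits that for $\lam<0$ the degree $\lam-n$ drops strictly below $-n$. If $E_{-\lam}f$ is positive definite on all of $\bbr^n$, then $[E_{-\lam}f]^\wedge$ is a positive distribution on $\bbr^n$, hence by Schwartz's theorem a locally finite positive Radon measure on $\bbr^n$. On $\bbr^n\setminus\{0\}$ this measure has the polar form $r^{\lam-1}\,dr\,dg(\theta)$, whose radial part diverges at $r=0$ whenever the nonnegative measure $g$ is nonzero. Local finiteness at the origin therefore forces $g=0$, and the inversion of Lemma \ref{l1} then yields $f=0$.

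For the final equivalence $(c)\Leftrightarrow(d)$, I would apply Lemma \ref{l2} with $\alpha=1-\lam$ to obtain $R_iM^{1-\lam}\mu=c\,R^{i-\lam}_{n-i,\perp}\mu$, which gives $(c)\Rightarrow(d)$ after absorbing the positive constant $c$ into $\mu$; the reverse direction $(d)\Rightarrow(c)$ uses the same identity together with injectivity of $R_i$ on $\D'_e(S^{n-1})$, which follows from the non-vanishing of its Fourier--Laplace multipliers on even spherical harmonics. The main delicate point I anticipate is in part (i): one must confirm that the analytically continued tempered distribution $E_{\lam-n}g$ really does coincide, on all of $\bbr^n$, with the candidate positive measure supplied by Schwartz's theorem---this is exactly what the exclusion $\lam\notin\{0,-2,-4,\ldots\}$ ensures, by avoiding the poles at which a delta-at-origin contribution could otherwise hide.
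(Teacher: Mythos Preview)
Your approach to part (ii), $(a)\Leftrightarrow(b)\Leftrightarrow(c)$, is essentially the paper's: both rely on the Semyanistyi formula (\ref{cfl}) to reduce to $E_{\lam-n}g$ with $g=M^{1+\lam-n}f$, then test against radially separated functions to identify positivity of $g$ (invoking the Schwartz-type Theorem \ref{sm}), and finally apply the inversion of Lemma \ref{l1}. One small imprecision: when $\lam>0$ you say ``$E_{\lam-n}g$ is locally integrable near the origin''; since $g$ may be a singular measure this phrasing is off, but the substantive point---that $\int_0^\infty r^{\lam-1}\int_{S^{n-1}}\phi(r\theta)\,dg(\theta)\,dr\ge 0$ converges for $\phi\ge 0$ and $\lam>0$---is exactly what the paper verifies directly.

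Your argument for (i) is genuinely different from the paper's and, in my view, more conceptual. The paper chooses an explicit test function $\phi(x)=e^{-|x|^m}p_{t,\theta}(x/|x|)$ (Poisson kernel angular part) and computes $\langle E_{\lam-n}g,\phi\rangle=c_\lam(\Pi_t g)(\theta)$ with $c_\lam=m^{-1}\Gam(\lam/m)$; for $\lam<0$ and $m>-\lam$ the constant $c_\lam$ is negative, which forces $\Pi_t g\equiv 0$ and hence $g=0$. Your route---positive tempered distribution $\Rightarrow$ locally finite Radon measure, whose restriction to $\rn\setminus\{0\}$ is $r^{\lam-1}dr\,dg(\theta)$, which has infinite mass near $0$ unless $g=0$---avoids the Gamma-function computation entirely. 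Both are valid; the paper's approach is more hands-on, yours is structural. Your closing remark about the exclusion $\lam\notin\{0,-2,-4,\ldots\}$ preventing a ``hidden delta'' is slightly misdirected: the exclusion is needed simply for $E_{\lam-n}g$ to be well-defined as a tempered distribution, not to rule out extra origin-supported terms in the positive-measure identification.

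For $(c)\Leftrightarrow(d)$ your approach and the paper's are dual to one another. You apply Lemma \ref{l2} to write $R_iM^{1-\lam}\mu=c\,R^{i-\lam}_{n-i,\perp}\mu$ and then invoke injectivity of $R_i$ on $\D'_e(S^{n-1})$; the paper instead tests against $R_i^*\vp$, uses Lemma \ref{l4} (the dual of Lemma \ref{l2}), and invokes Lemma \ref{l34} to the effect that $R_i^*$ is surjective onto $\D_e(S^{n-1})$. These are equivalent by duality. Two small points you should make explicit: Lemma \ref{l2} is stated for $f\in L^1$ or $f\in\D_e$, so applying it with $\mu$ a measure requires a one-line extension by duality; and your claim about ``Fourier--Laplace multipliers'' of $R_i$ is slightly loose since $R_i$ is not a spherical convolution---what you mean is that $R_i^*R_i$ has nonvanishing multipliers on even harmonics, which is exactly what Lemma \ref{l34} encodes.
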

\begin{proof} {\rm (i)}
 Choose $\phi (x)=\exp (-|x|^m)\, p_{t,\theta}
(x/|x|)$, where $m \in 2\bbn$ and $p_{t,\theta} (\cdot)$ is the
Poisson kernel \be\label{pker} p_{t,\theta} (u)= \frac{1-t^2}{ (1-2t
u\cdot \theta +t^2)^{n/2}},\qquad  0<t <1; \quad u, \theta \in
S^{n-1}.\ee Then $\lng E_{\lam -n}M^{1+\lam -n} f, \phi\rng= c_\lam
(\Pi_t
 M^{1+\lam -n} f)(\theta)$, where
$$c_\lam= a.c. \int_0^\infty r^{\lam -1} \exp (-r^m)\,
dr=m^{-1}\Gam (\lam /m)$$ and $(\Pi_t M^{1+\lam -n} f)(\theta)$ is
the Poisson integral of $M^{1+\lam -n} f$. If  $E_{-\lam} f$ is a
positive definite distribution, then, by (\ref{cfl}), $ E_{\lam
-n}M^{1+\lam -n} f$ is a positive distribution. On the other hand,
if $\lam <0$ and $m>-\lam $, then $c_\lam<0$. Hence $\lng E_{\lam
-n}M^{1+\lam -n} f, \phi\rng$  can be  non-negative for {\it every}
non-negative $\phi\in \S(\bbr^n)$ only if $(\Pi_t M^{1+\lam -n}
f)(\theta)=0$ for every $0<t <1$ and $ \theta \in S^{n-1}$. The
latter implies $M^{1+\lam -n}f=0$, which is equivalent to $f=0$
because $M^{1+\lam -n}$ is injective; see Lemma \ref{l1}.

{\rm (ii)} Let $[E_{-\lam}f]^\wedge$ be a positive distribution on
$\rn \setminus \{0\}$. It means that for every  $\phi \in
\S(\bbr^n)$ such that $\phi \ge 0$ and $0\notin \supp \phi$, $\lng
[E_{-\lam}f]^\wedge, \phi \rng\ge 0$ or, by (\ref{cfl}), $\lng
E_{\lam -n}M^{1+\lam -n} f, \phi \rng\ge 0$. Choose $\phi (x)=\psi
(|x|) \om (x/|x|)$, where $\om\in \D (S^{n-1})$, $ \om \ge 0$, and
$\psi$ is a smooth non-negative function such that $\;\int_0^\infty
r^{\a +n-2} \psi (r) dr=1$ and $0\notin \supp \psi$. Then $$\lng
E_{\lam -n}M^{1+\lam -n} f, \phi \rng =(M^{1+\lam -n} f, \om)\ge
0,$$ and therefore, $M^{1+\lam -n} f\in \M_{e+}(S^{n-1})$; see
Theorem \ref {sm}.

 Conversely, let $\mu=M^{1+\lam -n} f\in \M_{e+}(S^{n-1})$  and let
 $\phi \in \S(\bbr^n)$; $\phi \ge 0$. In the case $\lam<0$ we
 additionally assume $0\notin \supp \phi$. By (\ref{cfl}), \bea
\lng [E_{-\lam} f]^\wedge, \phi\rng&=&2^{n-\lam} \pi^{n/2}\,\lng
E_{\lam -n}\mu,\phi\rng\nonumber\\&=&2^{n-\lam}
\pi^{n/2}\int_0^\infty r^{ \lam -1}dr\int_{S^{n-1}}\phi (r\theta)
d\mu(\theta)\ge 0.\nonumber\eea This proves equivalence of (a) and
(b). Equivalence of (b) and (c) follows from Lemma \ref{l1}.

Let us prove the equivalence of (c) and (d). If $R_i f
=R^{i-\lam}_{n-i, \perp} \mu$, $\mu\in \M_{e+}(S^{n-1})$, then, by
(\ref{cnd}),  \bea(f, R_i^*\vp)&=&(R_i f, \vp)=(R^{i-\lam}_{n-i,
\perp} \mu, \vp)= (\mu, \overset * R{}^{i-\lam}_{n-i} \vp^\perp
)\nonumber\\&=& c^{-1}(\mu,M^{1-\lam} R^*_i \vp), \qquad \vp \in
\D(G_{n,i}).\nonumber\eea Since any function $\om \in \D_e(S^{n-1})$
can be expressed as $\om=R_i^* \vp$ for some $\vp \in \D(G_{n,i})$
(see Lemma \ref{l34}), this gives $(f,\om)=
c^{-1}(\mu,M^{1-\lam},\om)$ which is (c). Conversely, let
$f=M^{1-\lam} \mu$,  $\mu\in \M_{e+}(S^{n-1})$, that is, $(f,\om)=
(\mu,M^{1-\lam},\om)$ for every $\om \in \D_e(S^{n-1})$. Choose
$\om=R_i^* \vp, \; \vp \in \D(G_{n,i})$. Then, as above, $(f,
R_i^*\vp)\!=\!(\mu,M^{1-\lam} R^*_i \vp)\!=\!c\,(\mu, \overset *
R{}^{i-\lam}_{n-i} \vp^\perp )$, which gives (d).
\end{proof}

 \section{$\lam$-intersection bodies}

  \subsection{Definitions and comments} We remind that  $\K^n $ is
 the set of all origin-symmetric star  bodies $K$ in $\bbr^n, \; n\ge
 2$;
 $\rho_K $ and $ ||\cdot||_K $ are the radial function  and  the Minkowski functional  of
$K$. The following definitions and statements are motivated by
Theorem \ref{pr0} and the previous consideration. Let $\lam $ be a
real number,
 \be\label{s55} s_\lam=\left\{
\begin{array}{ll}1 &  \mbox{\rm if $\lam>0, \quad \lam \neq  n, n+2, n+4, \ldots \,$}, \\
\Gam (\lam/2) &  \mbox{\rm if $\lam<0, \quad \lam \neq  -2, -4, \ldots\,$}.
\end{array}
\right.  \ee

The values $\lam=0, n, n+2, n+4, \ldots \,$ will not be considered
in the following, but values $\lam= -2, -4, \ldots\,$ will be
included. They become meaningful if we change normalization. For
$\lam \neq 0, n, n+2, n+4 \ldots \,$, let
 $\I^n_\lam $ be the set of bodies $K \in \K^n$, for which there is a
 measure $\mu\in \M_{e+}(S^{n-1})$  such that
$ s_\lam\rho_K=M^{1-\lam}\mu $ if $\lam \neq -2\ell, \; \ell \in
\bbn$, and $ \rho_K=\tilde M^{1-\lam}\mu \equiv \tilde
M^{1+2\ell}\mu $, otherwise.
 The equality $s_\lam\rho_K=M^{1-\lam}\mu$
means that for any $\vp \in \D(S^{n-1})$,
 $$
s_\lam\,\int_{S^{n-1}} \rho_K^{k}(\theta)\vp (\theta)\, d\theta= \int_{S^{n-1}}
 (M^{1-\lam}\vp)(\theta)\, d\mu (\theta),$$
where for $\lam \ge 1$, $(M^{1-\lam}\vp)(\theta)$ is understood in
the sense of analytic continuation. We remind the notation  $$
\Lam_0=\{n, n+2, n+4 \ldots \}
 \cup \{0, -2, -4, \ldots \}.$$
\begin{theorem}\label{aprr0} For $\lam \in \bbr \setminus \Lam_0$, the
following statements are equivalent:

{\rm (a)} $K \in \I^n_\lam$;

{\rm (b)} The Fourier transform $ [s_\lam
\,||\cdot||_K^{-\lam}]^\wedge $ is a positive  distribution on $\rn
\setminus \{0\}$ $($for $\lam> 0$, this can be replaced by
``$||\cdot||_K^{-\lam}$ is a positive definite distribution on
$\rn$''$)$;

{\rm (c)} $s_\lam \,M^{1+\lam -n}\rho_K^\lam\in \M_{e+}(S^{n-1})$;
\end{theorem}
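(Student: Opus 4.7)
The plan is to obtain Theorem \ref{aprr0} as a direct translation of Theorem \ref{pr0}, applied to the distribution $f = s_\lam \rho_K^\lam$. Since $K=-K$ and $\rho_K$ is continuous and bounded away from $0$, the function $s_\lam \rho_K^\lam$ lies in $C_e(S^{n-1}) \subset \D'_e(S^{n-1})$, so the hypotheses of Theorem \ref{pr0}(ii) are met for every $\lam \in \bbr \setminus \Lam_0$, which is precisely our standing assumption.

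The bridge between the two theorems is the elementary identity
\[
||x||_K^{-\lam} = |x|^{-\lam}\,\rho_K^\lam(x/|x|) = (E_{-\lam}\rho_K^\lam)(x), \qquad x \in \rn \setminus \{0\},
\]
combined with the meromorphic-continuation construction of $E_\lam f$ recalled before (\ref{cf}). This identifies the tempered distribution $s_\lam\,||\cdot||_K^{-\lam}$ with $E_{-\lam}(s_\lam \rho_K^\lam)$, so condition (b) of Theorem \ref{aprr0} is nothing other than condition (a) of Theorem \ref{pr0} applied to $f = s_\lam \rho_K^\lam$. Similarly, condition (c) of Theorem \ref{aprr0}, namely $s_\lam M^{1+\lam-n}\rho_K^\lam \in \M_{e+}(S^{n-1})$, is condition (b) of Theorem \ref{pr0} for the same $f$; while the definition of $\I^n_\lam$ (condition (a) of Theorem \ref{aprr0}), namely $s_\lam \rho_K^\lam = M^{1-\lam}\mu$ for some $\mu \in \M_{e+}(S^{n-1})$, is condition (c) of Theorem \ref{pr0} for the same $f$. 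The three equivalences in Theorem \ref{aprr0} therefore follow at once from the three equivalences in Theorem \ref{pr0}.

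The main piece of bookkeeping I anticipate is the handling of the normalizing factor $s_\lam = \Gam(\lam/2)$ for $\lam<0$. Because this Gamma factor changes sign on the successive intervals $(-2\ell,-2\ell+2)$, one cannot naively absorb it into the measure $\mu$ while preserving non-negativity; the role of $s_\lam$ in the definition of $\I^n_\lam$ (and in the statements of (b) and (c)) is precisely to keep the measures on the right-hand sides honestly non-negative regardless of where $\lam$ lies in $\bbr \setminus \Lam_0$. Since the operators $M^{1-\lam}$, $M^{1+\lam-n}$, and the Fourier transform are all linear, the constant $s_\lam$ passes harmlessly through each equivalence, and no further analytic content beyond Theorem \ref{pr0} is required to finish.
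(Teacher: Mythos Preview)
Your proposal is correct and follows exactly the paper's approach: the paper's proof is the single sentence ``The theorem is an immediate consequence of Theorem \ref{pr0} if the latter is applied to $f=s_\lam \rho_K^\lam$,'' and you have simply unpacked this sentence, correctly matching conditions (a), (b), (c) of Theorem \ref{aprr0} with conditions (c), (a), (b) of Theorem \ref{pr0}(ii) via the identity $||\cdot||_K^{-\lam}=E_{-\lam}\rho_K^\lam$.
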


The theorem is an immediate consequence of Theorem \ref{pr0} if the
latter is applied to $f=s_\lam \rho_K^\lam$. Another useful
characterization  is provided by Theorem \ref{pr0} (d).
\begin{theorem}\label{prr0} Let $\lam \in \bbr \setminus \Lam_0$.
If   $K \in\I^n_\lam$, then for every $i\in \{1,2, \ldots, n-1\}$ there is a measure
 $\mu\in \M_{e+}(S^{n-1})$ such that $s_\lam R_i \rho_K^\lam =R^{i-\lam}_{n-i,
\perp} \mu$. Conversely, if $$s_\lam R_i \rho_K^\lam
=R^{i-\lam}_{n-i, \perp} \mu$$ for some $i\in \{1,2, \ldots, n-1\}$
and some $\mu\in \M_{e+}(S^{n-1})$, then  $K \in\I^n_\lam$.
\end{theorem}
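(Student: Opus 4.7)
The plan is to recognize this statement as nothing more than the equivalence (c) $\Leftrightarrow$ (d) of Theorem \ref{pr0} applied to the specific even distribution $f := s_\lam\,\rho_K^\lam$ on $S^{n-1}$. Because $K\in \K^n$ is origin-symmetric, $\rho_K$ is an even continuous positive function on $S^{n-1}$, so $f\in C_e(S^{n-1})\subset \D'_e(S^{n-1})$. Moreover, the hypothesis $\lam\in \bbr\setminus\Lam_0$ includes in particular $\lam\neq 0,-2,-4,\ldots\,$, which is exactly the condition under which (c) $\Leftrightarrow$ (d) of Theorem \ref{pr0} is asserted for every $i=1,\ldots,n-1$.

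For the forward implication, I would argue as follows. By the very definition of $\I^n_\lam$ (recall $\lam\notin\{-2\ell\}$ here), $K\in \I^n_\lam$ means that there exists $\nu\in \M_{e+}(S^{n-1})$ with $f=M^{1-\lam}\nu$, which is precisely condition (c) of Theorem \ref{pr0} for $f$. Invoking (c) $\Rightarrow$ (d), for every $i\in \{1,\ldots,n-1\}$ one obtains a measure $\mu\in \M_{e+}(S^{n-1})$ with $R_i f = R^{i-\lam}_{n-i,\perp}\mu$; unraveling $f=s_\lam\rho_K^\lam$ gives $s_\lam R_i\rho_K^\lam=R^{i-\lam}_{n-i,\perp}\mu$, as desired.

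For the converse, if the identity $s_\lam R_i\rho_K^\lam=R^{i-\lam}_{n-i,\perp}\mu$ holds for just one value of $i$ and one measure $\mu\in \M_{e+}(S^{n-1})$, this says $R_i f = R^{i-\lam}_{n-i,\perp}\mu$, i.e., condition (d) of Theorem \ref{pr0} is met. The implication (d) $\Rightarrow$ (c) then yields $f=M^{1-\lam}\nu$ for some $\nu\in \M_{e+}(S^{n-1})$, which reads $s_\lam\rho_K^\lam=M^{1-\lam}\nu$ and means exactly $K\in \I^n_\lam$.

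I do not anticipate any substantive obstacle: the hard work has already been done inside Theorem \ref{pr0}, where (c) $\Leftrightarrow$ (d) was established by combining the intertwining identity (\ref{cnd}) of Lemma \ref{l4} (which converts $M^\a R^*_i$ into $\overset * R{}^{\a+i-1}_{n-i}$ on the orthogonal Grassmannian), the representation $\om=R_i^*\vp$ from Lemma \ref{l34} (which exhibits every $\om\in \D_e(S^{n-1})$ as a dual Radon transform), and testing against such $\om$. The only point worth double-checking is the bookkeeping of the normalizing constants $s_\lam$ and $\gam_{n,n-i}(i-\lam)$, and that $\rho_K^\lam$, being continuous and even, lies in the class of distributions to which (c) $\Leftrightarrow$ (d) of Theorem \ref{pr0} applies.
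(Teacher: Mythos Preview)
Your proposal is correct and matches the paper's approach exactly: the paper presents Theorem \ref{prr0} as an immediate consequence of Theorem \ref{pr0}(d) applied to $f=s_\lam\rho_K^\lam$, with no further argument. Your identification of $K\in\I^n_\lam$ with condition (c) and the Radon-transform identity with condition (d), together with the observation that $\lam\in\bbr\setminus\Lam_0$ ensures $\lam\neq 0,-2,-4,\ldots$, is precisely what is needed.
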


Although $\I^n_\lam $ was called ``the set of bodies'', the
definition of this set is purely analytic and extra work is needed
to understand  what {\it bodies}  (if any) actually constitute  the
class
 $\I^n_\lam$.

 The following  comments will be helpful.

{\bf 1.} The case $\lam >n$ is not so interesting, because by
Theorem \ref{aprr0}(c), $\I^n_\lam$ is either empty (if $\Gam
((n-\lam)/2)<0$) or coincides with the whole class $\K^n$ (if $\Gam
((n-\lam)/2)>0$).

{\bf 2.} The case  $\lam \in (0,n)$ agrees with the concept of
isometric embedding of the  space $(\bbr^n,||\cdot||_K)$ into
  $L_{-p}, \; p=\lam$; see Introduction. In the framework of this concept,
  all bodies $K \in
  \I^n_\lam$ can be regarded as ``unit balls of $n$-dimensional
  subspaces of $L_{-\lam}$''.

{\bf 3.} If $K \in
  \I^n_\lam$, where $\lam<0$ (one can replace $\lam$ by $=-p, \; p>0$), then
$$ ||u||_K^p= \int_{S^{n-1}}
 |\theta \cdot u|^{p} \,d\mu(\theta)
 $$
for some $\mu\in \M_{e+}(S^{n-1})$. This is the well known L\'evy
representation, characterizing isometric embedding of the  space
$(\bbr^n,||\cdot||_K)$ into $L_{p}$; see Lemma 6.4 in \cite{K3}.
 Statement (b) in Theorem \ref{aprr0} agrees with Theorem \ref{ttthm21}.
 Keeping this terminology, we can state the following
 \begin{proposition} Let $p>-n, \; p \neq 0$. Then $(\bbr^n,||\cdot||_K)$ embeds
 isometrically in $L_p$ if and only if  $K \in \I^n_{-p}$.
\end{proposition}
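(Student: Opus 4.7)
The plan is to deduce the proposition by matching the distributional characterizations of $\I^n_{-p}$ in Theorem \ref{aprr0} with the standard definitions of isometric $L_p$-embedding on either side of $p=0$. The overall strategy is to set $\lam=-p$ throughout and verify the equivalence case by case, splitting according to the sign of $p$ and singling out the positive even integers where $\Gam(-p/2)$ has a pole.

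For $p>0$ with $p\neq 2,4,\ldots$, the argument is essentially a one-line identification. With $\lam=-p$ and $s_{-p}=\Gam(-p/2)$, the condition $K\in\I^n_{-p}$ in Theorem \ref{aprr0}(b) reads: $\Gam(-p/2)[||\cdot||_K^p]^\wedge$ is a positive distribution on $\bbr^n\setminus\{0\}$, which is literally the criterion of Theorem \ref{ttthm21}. For the omitted values $p=2\ell$, $\ell\in\bbn$, the class $\I^n_{-2\ell}$ is defined through the unnormalized operator $\tilde M^{1+2\ell}$, so that $K\in\I^n_{-2\ell}$ means $||x||_K^{2\ell}=\int_{S^{n-1}}|\theta\cdot x|^{2\ell}\,d\mu(\theta)$ for some $\mu\in\M_{e+}(S^{n-1})$; this moment representation is the classical characterization of isometric embedding of an $n$-dimensional seminormed space in $L_{2\ell}$.

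For $-n<p<0$, write $q=-p\in(0,n)$ and apply Theorem \ref{aprr0} with $\lam=q>0$, $s_q=1$. The right-hand side of the identity defining $L_{-q}$-embedding in Definition \ref{def45k} is, by unwinding the inner radial integral against the Jacobian $r^{n-1}$, equal to $\lng E_{q-n}\mu,\hat\phi\rng$, where $E_{q-n}$ is the homogeneous extension introduced in Section 4. Using the Parseval identity $\lng\hat F,\hat\phi\rng=(2\pi)^n\lng F,\phi\rng$ adopted in the paper, the defining identity becomes $[||\cdot||_K^{-q}]^\wedge=(2\pi)^n E_{q-n}\mu$ in $\S'(\bbr^n)$. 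Combining this with formula (\ref{cfl}) applied to $f=\rho_K^q$, which gives $[||\cdot||_K^{-q}]^\wedge=2^{n-q}\pi^{n/2}E_{q-n}(M^{1+q-n}\rho_K^q)$, one reads off that Definition \ref{def45k} holds for some $\mu\in\M_+(S^{n-1})$ if and only if $M^{1+q-n}\rho_K^q\in\M_{e+}(S^{n-1})$, which by Theorem \ref{aprr0}(c) is precisely $K\in\I^n_q=\I^n_{-p}$.

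The only genuinely analytic step is the Parseval and homogeneous-extension identification in the $p<0$ case; the remainder is bookkeeping of the constants $s_\lam$, $\gam_n(\cdot)$ and $(2\pi)^n$, together with the care required at the exceptional values $p=2\ell$ (the reason the paper introduces $\tilde M^{1+2\ell}$ precisely there). Beyond Theorems \ref{aprr0} and \ref{ttthm21}, formula (\ref{cfl}), and the classical moment representation for $L_{2\ell}$-embedding, no new analytic input is required.
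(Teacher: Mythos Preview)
Your proposal is correct and follows essentially the same approach as the paper. The paper does not give a formal proof of this proposition; it is stated as a summary of Comments 2 and 3 in Section 5.1, which observe that for $\lam=-p<0$ the defining relation $s_\lam\rho_K^\lam=M^{1-\lam}\mu$ (resp.\ $\rho_K^{-2\ell}=\tilde M^{1+2\ell}\mu$) is exactly the L\'evy representation $||u||_K^p=\int_{S^{n-1}}|\theta\cdot u|^p\,d\mu(\theta)$ characterizing $L_p$-embedding, while for $\lam\in(0,n)$ the class $\I^n_\lam$ ``agrees with'' Koldobsky's Definition~\ref{def45k} of embedding in $L_{-\lam}$. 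You have simply unpacked these identifications in more detail---matching Theorem~\ref{aprr0}(b) with Theorem~\ref{ttthm21} for $p>0$, $p\neq 2\ell$, invoking the moment representation at $p=2\ell$, and for $-n<p<0$ explicitly rewriting Definition~\ref{def45k} via~(\ref{cfl}) and Theorem~\ref{aprr0}(c)---which is exactly the content the paper leaves implicit.
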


{\bf 4.} If $\lam =k\in \{1,2, \ldots, n-1\}$, then
$\I^n_\lam=\I^n_k$ coincides with the class of $k$-intersection
bodies; see Definition \ref {def45} and Theorem \ref{tthm21}.
Theorems \ref{aprr0} and \ref{prr0} provide new characterizations of
this class.

These comments   inspire the following
\begin{definition}\label{dffl}
Let $\lam<n, \; \lam \neq 0$. A body $K\in\K^n$ is said to be a
$\lam$-intersection body if $K \in \I^n_\lam$, or, in other words,
if there is a measure $\mu\in \M_{e+}(S^{n-1})$ such that $s_\lam
\rho_K^\lam=M^{1-\lam}\mu$ if $\lam \neq -2\ell, \; \ell \in \bbn$,
and $ \rho_K^{-2\ell}=\tilde M^{1+2\ell}\mu $, otherwise.
\end{definition}

The result of Theorem \ref{prr0} for $\lam =i=k$ can serve as an
alternative definition of $k$-intersection bodies in terms of Radon
transforms. This definition agrees with Definition \ref{kyy1} and
mimics Definition \ref{luu2}.
\begin{definition}\label{def4} Let $k\in \{1,2, \ldots, n-1\}$.
 A body $K\in\K^n$  is  a
$k$-intersection body if there is a non-negative measure $\mu$ on
$S^{n-1}$ such that \be\label{ib5} (R_{k} \rho_K^{k})(\xi)=(R_{n-k}
\mu)(\xi^\perp), \qquad  \xi \in G_{n,k}. \ee
\end{definition}

Equality (\ref{ib5}) is understood in the weak sense according
(\ref{fina}).  Namely, for $\vp \in C(G_{n,k})$ and $\vp^\perp
(\eta)= \vp (\eta^\perp), \; \eta \in G_{n,n-k}$, (\ref{ib5}) means
\be\label{neh} \int_{G_{n,k}} (R_{k} \rho_K^{k})(\xi) \vp (\xi) \,
d\xi=\int_{S^{n-1}} (R_{n-k}^*\vp^\perp)(\theta) \, d\mu
(\theta).\ee

\subsection{$\lam$-intersection
bodies of star bodies and closure in the radial metric}

As we mentioned in Introduction, the class of intersection bodies,
which coincides with $\I^n_\lam$ when  $\lam =1$, is the closure
 in the radial metric of the class of intersection bodies of star
bodies. Below we extend this result to all $\lam <n, \; \lam \neq
0$, in the framework of the unique approach. We remind (see
Definition \ref{kyy1}) that $K\in \K^n$ is a $k$-intersection body
of a body $L\in \K^n$ and write $K=\I\B_k (L)$ if \be\label{dive}
\vol_{k} (K\cap \xi)=\vol_{n-k} (L\cap \xi^\perp)\qquad \forall \xi
\in G_{n,k}.\ee
 Let  $\I\B_{k,n}$  be the set of all bodies $K\in \K^n$ satisfying
(\ref{dive}) for some $L\in \K^n$.

How can we extend the purely geometric property (\ref{dive}) to
non-integer values of  $k$? To this end, we first express
(\ref{dive}) in terms of the generalized cosine transforms
(\ref{af}).
\begin{lemma} If $K=\I\B_k (L)$ is infinitely smooth, then
\be\label{kl} \rho_L^{n-k}\!=\!c\,M^{1-n+k} \rho_K^{k}, \qquad
\rho_K^{k}\!=\!c^{-1}M^{1-k}\rho_L^{n-k}, \ee
$$c=\pi^{k-n/2}(n-k)/k.$$
\end{lemma}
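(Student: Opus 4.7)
The plan is to translate the geometric identity $\vol_k(K\cap\xi)=\vol_{n-k}(L\cap\xi^\perp)$ into an identity among spherical Radon transforms, then use the analytic family $\{M^\alpha\}$ to absorb the dimension mismatch, and finally strip off the Radon transform by invoking its injectivity on smooth even functions. In polar coordinates on the section $K\cap\xi$ (and $L\cap\xi^\perp$), the volumes become integrals of $\rho_K^k$ (resp.\ $\rho_L^{n-k}$) over the great sphere $S^{n-1}\cap\xi$ (resp.\ $S^{n-1}\cap\xi^\perp$). After normalizing $d_\xi\theta$ to a probability measure, the defining identity (\ref{dive}) takes the form
\begin{equation*}
\frac{\sigma_{k-1}}{k}\,(R_k\,\rho_K^k)(\xi)\;=\;\frac{\sigma_{n-k-1}}{n-k}\,(R_{n-k,\perp}\,\rho_L^{n-k})(\xi),\qquad\xi\in G_{n,k}.
\end{equation*}

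Next I would specialize (\ref{conn}) of Lemma \ref{l2} with $f=\rho_K^k$ and $\alpha=k+1-n$, so that $\alpha+n-k-1=0$. Then $R_k^{\alpha+n-k-1}=R_k^0=c_k R_k$ by (\ref{lim}) with $c_k=\sigma_{k-1}/(2\pi^{(k-1)/2})$, giving
\begin{equation*}
(R_{n-k,\perp}\,M^{1-n+k}\rho_K^k)(\xi)=\frac{2\pi^{(n-k-1)/2}}{\sigma_{n-k-1}}\cdot\frac{\sigma_{k-1}}{2\pi^{(k-1)/2}}\,(R_k\,\rho_K^k)(\xi)=\pi^{n/2-k}\frac{\sigma_{k-1}}{\sigma_{n-k-1}}\,(R_k\,\rho_K^k)(\xi).
\end{equation*}
Substituting into the displayed identity and collecting constants, the two sides become $R_{n-k,\perp}$ applied to $\frac{\pi^{k-n/2}}{k}\,M^{1-n+k}\rho_K^k$ and to $\frac{1}{n-k}\,\rho_L^{n-k}$ respectively, up to a common factor $\sigma_{n-k-1}$.

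To conclude, I cancel the outer $R_{n-k,\perp}$. Both arguments are smooth (since $K$ and hence $L$ are smooth, and $M^{1-n+k}$ preserves $\D_e(S^{n-1})$ by Lemma \ref{l1}) and even, so injectivity applies: if $R_{n-k,\perp}h=0$ for smooth even $h$, then $Mh=R_k^*R_{n-k,\perp}h=0$ by the factorization (\ref{svv}) of Theorem \ref{l3}, whence $h=0$ by injectivity of the Minkowski--Funk transform on $C_e(S^{n-1})$. This yields the first identity $\rho_L^{n-k}=c\,M^{1-n+k}\rho_K^k$ with $c=\pi^{k-n/2}(n-k)/k$. The second identity is then immediate: apply $M^{1-k}$ to both sides, and use Lemma \ref{l1} with $\alpha=1-k$, $\beta=1-n+k$ (so $\alpha+\beta=2-n$), which gives $M^{1-k}M^{1-n+k}=I$ on $\D_e(S^{n-1})$, hence $\rho_K^k=c^{-1}M^{1-k}\rho_L^{n-k}$.

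The routine parts are the polar-coordinate rewriting and the Gamma/surface-area bookkeeping for the constant $c$; the only genuine point of care is the injectivity step, which I would handle exactly as above through the factorization in Theorem \ref{l3} rather than appealing to injectivity of $R_{n-k}$ on even functions directly.
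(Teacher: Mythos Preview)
Your argument is correct and follows essentially the same route as the paper: rewrite the defining volume identity via (\ref{rraa}), apply (\ref{conn}) with $i=k$ and $\a=1-n+k$ to convert $R_k\rho_K^k$ into $R_{n-k,\perp}M^{1-n+k}\rho_K^k$, then cancel $R_{n-k,\perp}$ and invoke (\ref{st}) for the second equality. The only substantive difference is that the paper simply appeals to ``injectivity of the Radon transform'' at the cancellation step, whereas you supply a self-contained justification via the factorization $Mh=R_k^*R_{n-k,\perp}h$ of Theorem~\ref{l3} and injectivity of $M$ on $C_e(S^{n-1})$. One minor slip: you assert that $L$ is smooth because $K$ is, but this is not known a priori---it is in fact a \emph{consequence} of the identity you are proving. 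Fortunately your injectivity argument only needs $h\in C_e(S^{n-1})$, and $\rho_L^{n-k}$ is continuous by definition of $\K^n$, so the proof stands as written.
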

\begin{proof} We make use of (\ref{conn}), where we set $i=k$, $\a=1-n+k$ and $f=\rho_K^{k}$.
By (\ref{lim}), this gives \be \label {bu1} R_k \rho_K^{k}=\tilde c
R_{n-k,\perp}M^{1-n+k} \rho_K^{k}, \qquad \tilde c=
\frac{\pi^{k-n/2}\, \sig_{n-k-1}}{\sig_{k-1}}.\ee On the other hand,
if  $ K =\I\B_k (L)$ is infinitely smooth, then, according to
(\ref{dive}) and the equality  \be\label{rraa}\vol_k(K\cap \xi)
=\frac{\sig_{k-1}}{k}\,(R_k \rho_K^k )(\xi), \ee  we have \be \label
{bu2} R_{k} \rho_K^{k}=\frac{k\,\sig_{n-k-1}}{(n-k)\,\sig_{k-1}
}\,R_{n-k,\perp} \rho_L^{n-k}.\ee Comparing (\ref{bu1}) and
(\ref{bu2}), owing to injectivity of the Radon transform, we obtain
the first equality in (\ref{kl}). The second equality
 follows from the first one by (\ref{st}).
\end{proof}

Equalities (\ref{kl}) are extendable to non-integer values of $k$.
We denote
$$
c_{\lam,n}=\pi^{\lam-n/2}(n\!-\!\lam)/\lam,  $$ and let $s_\lam$ be defined by (\ref{s55}).
\begin{definition}\label{df} Let $\lam <n, \; \lam \neq 0$;
 $\;K,L \in \K^n$. We say that $K$ is a $\lam$-intersection body
of  $L$ and  write $K=\I\B_\lam (L)$ if $s_\lam\rho_K^\lam
\!=\!c_{\lam,n}^{-1}M^{1-\lam} \rho_L^{n-\lam}$ in the case $\lam
\neq -2\ell, \; \ell \in \bbn$, and $\rho_K^{-2\ell} =\tilde
M^{1+2\ell} \rho_L^{n+2\ell}$, otherwise.
 The set of all
$\lam$-intersection bodies of star bodies will be denoted by
$\I\B_{\lam,n}$. We also denote \be \I\B_{\lam,n}^\infty\!=\!\{K \in
\I\B_{\lam,n}: \,\rho_K\in \D_e(S^{n-1})\}.\ee
 \end{definition}

By (\ref{st}),  equality $s_\lam\rho_K^\lam
\!=\!c_{\lam,n}^{-1}M^{1-\lam} \rho_L^{n-\lam}$  is equivalent to $
\rho_L^{n-\lam}=s_\lam\,c_{\lam,n} \,M^{1-n+\lam} \rho_K^{\lam}$.
Both equalities are generally understood in the sense of
distributions, for instance, $$s_\lam(\rho_K^\lam, \vp) =
c_{\lam,n}^{-1}(\rho_L^{n-\lam}, M^{1-\lam} \vp), \qquad \vp \in
\D(S^{n-1}).$$ If $K$ (or $L$) is smooth, then $s_\lam\rho_K^\lam
(\theta)\!=\!c_{\lam,n}^{-1}(M^{1-\lam} \rho_L^{n-\lam})(\theta)$
pointwise for every $\theta \!\in \!S^{n-1}$.

\begin{theorem}\label{tcl} Let $\lam <n, \; \lam \neq 0$. If $\lam
\neq -2\ell, \; \ell \in \bbn$, then the class $\I^n_\lam$ of
$\lam$-intersection bodies is the closure of the classes
$\I\B_{\lam,n}$ and $\I\B_{\lam,n}^\infty $ of $\lam$-intersection
bodies of star bodies in the radial metric:
 \be\label{cl2}
\I^n_\lam=\cl \, \I\B_{\lam,n}=\cl \,  \I\B_{\lam,n}^\infty.\ee
 If $\lam
= -2\ell, \; \ell \in \bbn$, then $\I^n_\lam \subset \cl \,
\I\B_{\lam,n}=\cl \,  \I\B_{\lam,n}^\infty$.
\end{theorem}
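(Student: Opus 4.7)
The plan is to establish two facts: (a) $\I\B_{\lam,n} \subset \I^n_\lam$, with $\I^n_\lam$ closed in the radial metric, hence $\cl\,\I\B_{\lam,n} \subset \I^n_\lam$; and (b) every $K \in \I^n_\lam$ is the radial limit of smooth bodies $K_t \in \I\B_{\lam,n}^\infty$. Since trivially $\I\B_{\lam,n}^\infty \subset \I\B_{\lam,n}$, these together will yield $\I^n_\lam = \cl\,\I\B_{\lam,n}^\infty = \cl\,\I\B_{\lam,n}$ when $\lam \neq -2\ell$; in the exceptional case $\lam = -2\ell$, part (a) is not being asserted, but part (b) will still give the claimed inclusion together with the identity of the two closures.

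For (a): if $K = \I\B_\lam(L)$, then by Definition~\ref{df} one has $s_\lam \rho_K^\lam = c_{\lam,n}^{-1} M^{1-\lam} \rho_L^{n-\lam} = M^{1-\lam}\mu$ with $d\mu = c_{\lam,n}^{-1}\rho_L^{n-\lam}\,d\theta$. A short sign check, separately for $\lam \in (0,n)$ and for $\lam <0$, using the piecewise definition of $s_\lam$ in (\ref{s55}) and the sign of $c_{\lam,n}$, will verify that $\mu \in \M_{e+}(S^{n-1})$, whence $K \in \I^n_\lam$. To obtain closedness of $\I^n_\lam$ I would take $K_j \to K$ with $s_\lam\rho_{K_j}^\lam = M^{1-\lam}\mu_j$, $\mu_j \ge 0$; the uniform convergence of $\rho_{K_j}^\lam$ together with the non-vanishing of the zeroth Fourier--Laplace multiplier of $M^{1-\lam}$ (from (\ref{55})) bounds $\mu_j(S^{n-1})$, and a weak-$*$ extraction then yields a limit $\mu \ge 0$ realizing $s_\lam\rho_K^\lam = M^{1-\lam}\mu$.

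The essential step is (b). Given $K \in \I^n_\lam$ with $s_\lam\rho_K^\lam = M^{1-\lam}\mu$, I would set $\mu_t = \Pi_t\mu$ for $0 < t < 1$; by positivity of the Poisson kernel $\mu_t$ is a strictly positive, even, smooth function on $S^{n-1}$. After fixing the sign constants so that $\rho_{L_t}^{n-\lam} = s_\lam c_{\lam,n}\mu_t$ defines a smooth star body $L_t \in \K^n$, I set $K_t = \I\B_\lam(L_t) \in \I\B_{\lam,n}^\infty$. The key observation is that $M^{1-\lam}$ and $\Pi_t$ are both Fourier--Laplace multiplier operators on $S^{n-1}$ and therefore commute, yielding
\[ s_\lam\rho_{K_t}^\lam = M^{1-\lam}\mu_t = \Pi_t(M^{1-\lam}\mu) = \Pi_t(s_\lam\rho_K^\lam). \]
Since $s_\lam\rho_K^\lam \in C_e(S^{n-1})$, Abel summability of spherical harmonic expansions delivers $\Pi_t(s_\lam \rho_K^\lam) \to s_\lam\rho_K^\lam$ uniformly as $t \to 1^-$. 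Combined with $\rho_K \ge c > 0$, this gives $\rho_{K_t} \to \rho_K$ uniformly, so $K_t \to K$ in the radial metric. Applying the same smoothing to a generic $K \in \I\B_{\lam,n}$ shows in addition that $\cl\,\I\B_{\lam,n}^\infty = \cl\,\I\B_{\lam,n}$ in both cases.

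For $\lam = -2\ell$ the identical scheme applies with $\tilde M^{1+2\ell}$ replacing $M^{1-\lam}$ and the cleaner prescription $\rho_{L_t}^{n+2\ell} = \mu_t$ (no sign complication arises because the operator is now normalized without the $\Gam$-factor). The main obstacle throughout the argument is the careful bookkeeping of sign conventions in the normalizing constants $s_\lam$, $c_{\lam,n}$, and $\gam_n(1-\lam)$ across the various ranges of $\lam$, in particular confirming that the constructed radial function $\rho_{L_t}$ is genuinely positive; once these signs are settled, the commutativity of the two multiplier operators makes the Abel-summability conclusion essentially automatic.
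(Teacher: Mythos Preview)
Your approach is correct and closely parallels the paper's: both arguments hinge on Poisson smoothing $\mu_t=\Pi_t\mu$ together with the commutation of $\Pi_t$ and $M^{1-\lam}$ to produce smooth bodies $K_t\in\I\B_{\lam,n}^\infty$ converging to a given $K\in\I^n_\lam$. The only substantive difference is in the reverse inclusion: you argue $\I\B_{\lam,n}\subset\I^n_\lam$ and then prove $\I^n_\lam$ is closed via weak-$*$ compactness of the measures $\mu_j$ (bounding $\mu_j(S^{n-1})$ through the zeroth multiplier $m_{0,1-\lam}$), whereas the paper bypasses closedness and instead applies the inverse operator $M^{1-n+\lam}$ from Lemma~\ref{l1} to show directly that $s_\lam M^{1-n+\lam}\rho_K^\lam$ is a non-negative measure whenever $K\in\cl\,\I\B_{\lam,n}^\infty$. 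Both routes are standard and of comparable length; your version has the mild advantage of yielding closedness of $\I^n_\lam$ as a byproduct, while the paper's version avoids the compactness extraction.
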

\begin{proof} STEP 1. We first prove that $\I^n_\lam\subset \cl \,
\I\B_{\lam,n}^\infty$. Let $K \in \I^n_\lam$, i.e.,

(a) $s_\lam\rho_K^\lam =M^{1-\lam} \mu, \;  \mu \in
\M_{e+}(S^{n-1})$, if $\lam \neq -2\ell, \; \ell \in \bbn$, and

(b) $ \rho_K^{-2\ell}=\tilde M^{1+2\ell}\mu $, otherwise.

\noindent Our aim is to define a sequence $K_j \in
\I\B_{\lam,n}^\infty$ such that $\rho_{K_j} \to \rho_{K}$ in the
$C$-norm.  Consider the Poisson integral $\Pi_t \rho_K^\lam$ (see
(\ref{pu})), that converges to $\rho_K^\lam$ in the $C$-norm when
$t\to 1$. In the case (a), for any test function $\om \in
\D(S^{n-1})$ we have
$$ (\Pi_t \rho_K^\lam,\om)=(\rho_K^\lam,\Pi_t \om)=s_\lam^{-1}(\mu,
M^{1-\lam}\Pi_t \om)=s_\lam^{-1}(M^{1-\lam}\Pi_t\mu, \om).$$ Similarly, in the
case (b),  we  have a pointwise equality $(\Pi_t
\rho_K^{-2\ell})(\theta)\!=\!(\tilde M^{1+2\ell}\Pi_t\mu)(\theta)$,
$ \theta \in S^{n-1}$. Choose  $K_j$ so that
 $\rho_{K_j}^\lam=\Pi_{t_j}\rho_K^\lam$, where $t_j$ is a
sequence in $(0, 1)$ approaching $1$. Clearly, $K_j $  converges to
 $K$ in the radial metric. Moreover,  $K_j \in
 \I\B_{\lam,n}^\infty$, because $\rho_{K_j}^\lam
=s_\lam^{-1}c_{\lam,n}^{-1}M^{1-\lam} \rho_{L_j}^{n-\lam}$ and
$\rho_{K_j}^{-2\ell} =\tilde
 M^{1+2\ell}\rho_{L_j}^{n+2\ell}$, where  the bodies $L_j$ are
 defined by $\rho_{L_j}^{n-\lam}= c_{\lam,n}\Pi_{t_j}\mu$ in the case (a), and
$\rho_{L_j}^{n+2\ell}= \Pi_{t_j}\mu$ in the case (b), respectively.

Conversely, let $K \in \cl \,  \I\B_{\lam,n}^\infty$, $\lam \neq -2,
-4, \ldots \,$.  It means that there is a sequence of $K_j \in
\I\B_{\lam,n}^\infty$ such that $\lim\limits_{j \to \infty} ||\rho_K
- \rho_{K_j}||_{C(S^{n-1})}=0$ and
$s_\lam\rho_{K_j}^\lam=c_{\lam,n}^{-1}M^{1-\lam}\rho_{L_j}^{n-\lam}$,
$ \rho_{L_j}\in \D_{e+}(S^{n-1})$. If $j \to \infty$, then for every
$\om\in\D(S^{n-1})$, \be\label{barb} s_\lam (\rho_{K_j}^\lam,
M^{1-n+\lam}\om) \to s_\lam (\rho_{K}^\lam,
M^{1-n+\lam}\om)\!=\!s_\lam ( M^{1-n+\lam}\rho_{K}^\lam, \om).\ee
The right-hand side of (\ref{barb}) is non-negative, because by
(\ref{st}), for every $j$ and every $\om \in \D_{e+}(S^{n-1})$,
$$s_\lam (\rho_{K_j}^\lam,
M^{1-n+\lam}\om)=c_{\lam,n}^{-1}(M^{1-\lam}\rho_{L_j}^{n-\lam},M^{1-n+\lam}\om)=
c_{\lam,n}^{-1}(\rho_{L_j}^{n-\lam},\om)\ge 0.$$ By Theorem \ref
{sm}, it follows that $s_\lam\,M^{1-n+\lam}\rho_{K}^\lam$ is a
non-negative measure. We denote it by $\mu$.
 By (\ref{st}), for any $\om \in
\D(S^{n-1})$,
$$ s_\lam (\rho_{K}^\lam,
\om)=s_\lam (M^{1-n+\lam}\rho_{K}^\lam, M^{1-\lam}
\om)=(\mu,M^{1-\lam} \om)=(M^{1-\lam}\mu, \om),$$ i.e., $K \!\in \!
\I^n_\lam$. This gives $\I\B_{\lam,n}^\infty \!\subset \!\I^n_\lam$
and, by above, $\I^n_\lam\!=\!\cl \, \I\B_{\lam,n}^\infty$.

STEP 2. It remains to prove that $\cl \, \I\B_{\lam,n}^\infty =\cl
\, \I\B_{\lam,n}$. Since $\I\B_{\lam,n}^\infty
\subset\I\B_{\lam,n}$, then $\cl \, \I\B_{\lam,n}^\infty \subset \cl
\, \I\B_{\lam,n}$. To prove the opposite inclusion, let $K \in \cl
\, \I\B_{\lam,n}$ and consider the case $\lam \neq -2, -4, \ldots
\,$.  We have to show that there is a sequence of smooth bodies
$K_j$, which converges to $K$ in the radial metric and such that
$s_\lam
 \rho_{K_j}^\lam=c_{\lam,n}^{-1}M^{1-\lam}\rho_{L_j}^{n-\lam}$ for
some bodies $L_j \in \K^n$. Since $K \in \cl \, \I\B_{\lam,n}$,
there is a sequence  $\tilde K_j\in \K^n$ such that $\lim\limits_{j
\to \infty} ||\rho_{\tilde K_j}-\rho_{K}||_{C(S^{n-1})}=0$ and
$s_\lam \rho_{\tilde
 K_j}^\lam=c_{\lam,n}^{-1}M^{1-\lam}\rho_{\tilde L_j}^{n-\lam}$ for some
bodies $\tilde L_j \in \K^n$. We define smooth bodies $K_j$ and
$L_j$ by
$$ \rho_{K_j}^\lam=\Pi_{1-1/j}\rho_{\tilde
 K_j}^\lam, \qquad   \rho_{L_j}^{n-\lam} =\Pi_{1-1/j}  \rho_{\tilde L_j}^{n-\lam},$$
 where $\Pi_{1-1/j}$ stands for the Poisson integral with
 parameter $1-1/j$. Since operators $\Pi_{1-1/j}$ and $M^{1-\lam}$
 commute, then $s_\lam
\rho_{ K_j}^\lam\!=\!c_{\lam,n}^{-1}M^{1-\lam}\rho_{L_j}^{n-\lam}$,
and therefore, $K_j \in \I\B_{\lam,n}^\infty$. On the other hand, by
the properties of the Poisson integral \cite{SW}, $$|\rho_{
K_j}^\lam-\rho_{K}^\lam |\le |\Pi_{1-1/j}\rho_{\tilde K_j}^\lam -
\Pi_{1-1/j}\rho_{K}^\lam |+ |\Pi_{1-1/j}\rho_{K}^\lam
-\rho_{K}^\lam| \to 0$$ as $j \to \infty$. It means, that $K \in \cl
\, \I\B_{\lam,n}^\infty$ or $\cl \, \I\B_{\lam,n} \subset \cl \,
\I\B_{\lam,n}^\infty$. Hence, by above,  $\cl \, \I\B_{\lam,n} = \cl
\, \I\B_{\lam,n}^\infty$. For $\lam = -2, -4, \ldots \,$, the
argument is similar.
\end{proof}
\begin{remark} If $\lam = -2, -4, \ldots \,$, we cannot prove the
coincidence of $\I^n_\lam $ and $\cl \,  \I\B_{\lam,n}^\infty$,
because the proof of the embedding $\cl \,  \I\B_{\lam,n}^\infty
\subset \I^n_\lam $ relies heavily on the fact that  $M^{1-\lam}$ is
an isomorphism of  $\D_e (S^{n-1})$. If $\lam = -2, -4, \ldots \,$,
this is not so, and  the operator $\tilde M^{1-\lam}$ has a
nontrivial kernel, which consists of spherical harmonics of degree $
>2\ell$; see \cite {R3} for  details.
\end{remark}

It is interesting to translate Theorem \ref{tcl} for $\lam=-p, \;
p>0$, into the language of isometric embeddings. Ignoring a
non-important positive constant factor and using polar coordinates,
one can replace the equalities $s_\lam\rho_K^\lam
\!=\!c_{\lam,n}^{-1}M^{1-\lam} \rho_L^{n-\lam}$ and $\rho_K^{-2\ell}
=\tilde M^{1+2\ell} \rho_L^{n+2\ell}$ in Definition \ref {df} by
\be\label{moby} ||u||_{K}^p=\int_{L} |x \cdot u|^{p} \,dx, \qquad
u\in
 S^{n-1}.\ee
\begin{corollary} ${}$

\noindent {\rm (i)} A unit ball of every
 $n$-dimensional subspace of $L_p$, can be approximated in  the radial metric by  bodies $K$, defined
by (\ref{moby}), where  $L\in \K^n$ has a $C^\infty$ boundary.

\noindent {\rm (ii)} If, moreover,  $p \neq 2,4, \ldots\,$, then the
set of unit balls of all $n$-dimensional subspaces of $L_p$, can be
identified with the closure in the radial metric of the set of
bodies $K$ satisfying (\ref{moby}) for some smooth body $L\in \K^n$
$($one can also consider arbitrary bodies $L\in \K^n)$.
\end{corollary}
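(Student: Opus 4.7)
The plan is to read off this corollary as a direct translation of Theorem \ref{tcl} (applied with $\lam=-p$, $p>0$) together with the L\'evy-representation identification of $\I^n_{-p}$ with the set of unit balls of $n$-dimensional subspaces of $L_p$ (Comment 3 and the proposition preceding it). The main work is therefore bookkeeping: I must check that Definition \ref{df} of $K=\I\B_{-p}(L)$, written as $s_{-p}\rho_K^{-p}=c_{-p,n}^{-1}M^{1+p}\rho_L^{n+p}$, is the same relation as (\ref{moby}) up to a positive multiplicative constant that can be absorbed by rescaling $L$.

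First I would make the translation explicit. For $p>0$ and $p\neq 2,4,\ldots$, the operator $M^{1+p}$ is regular and given by the integral formula (\ref{af}):
\[
(M^{1+p}\rho_L^{n+p})(u)=\gam_n(1+p)\int_{S^{n-1}}\rho_L(\theta)^{n+p}|\theta\cdot u|^p\,d\theta.
\]
Passing to polar coordinates in $\bbr^n$ rewrites the sphere integral as $(n+p)\int_L|x\cdot u|^p\,dx$. Hence $s_{-p}\rho_K^{-p}$ is a positive constant multiple of $\int_L|x\cdot u|^p\,dx$, and since $\rho_K^{-p}=\|u\|_K^p$ on $S^{n-1}$, an appropriate rescaling of $L$ turns the defining equation of $\I\B_{-p,n}$ precisely into (\ref{moby}). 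The exceptional case $p=2\ell$ is handled analogously, with $\tilde M^{1+2\ell}$ in place of $M^{1+p}$; the integral expression (\ref{afu}) makes the identification with (\ref{moby}) work for all positive $p$, which is why part (i) does not require excluding even integers.

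Next, by the L\'evy representation (see Comment~3 and the surrounding proposition), a body $K\in\K^n$ is the unit ball of an $n$-dimensional subspace of $L_p$ if and only if $K\in\I^n_{-p}$. Combining this with Theorem \ref{tcl}, for every $p>0$ we have $\I^n_{-p}\subset\cl\,\I\B_{-p,n}^\infty$, and for $p\neq 2,4,\ldots$ equality holds and moreover $\cl\,\I\B_{-p,n}^\infty=\cl\,\I\B_{-p,n}$. Part (i) follows immediately: any unit ball of an $n$-dimensional subspace of $L_p$ lies in $\I^n_{-p}$ and is therefore a radial-metric limit of smooth bodies $K_j\in\I\B_{-p,n}^\infty$, each of which, by the translation above, satisfies (\ref{moby}) for some $L_j$ with $C^\infty$ boundary. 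Part (ii) then reads off the two-sided identity $\I^n_{-p}=\cl\,\I\B_{-p,n}^\infty=\cl\,\I\B_{-p,n}$ in the language of (\ref{moby}).

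The only real obstacle is the careful translation in the first step: I must track the normalizing constants $s_{-p}$, $c_{-p,n}$ and $\gam_n(1+p)$ to be sure that the proportionality between $\|u\|_K^p$ and $\int_L|x\cdot u|^p\,dx$ is strictly positive (so that rescaling $L$ is legitimate) and that the exceptional values $p=2\ell$ are matched correctly through the $\tilde M^{1+2\ell}$ convention. Once that normalization is pinned down, both (i) and (ii) are formal consequences of Theorem \ref{tcl} and the identification of $\I^n_{-p}$ with $L_p$-embedded unit balls; in particular, the asymmetry between (i) and (ii) reflects exactly the asymmetry in Theorem \ref{tcl} between $\lam\neq -2\ell$ and $\lam=-2\ell$.
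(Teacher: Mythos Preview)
Your proposal is correct and matches the paper's approach exactly: the paper presents this corollary without proof, merely remarking that one translates Theorem \ref{tcl} with $\lam=-p$ via polar coordinates (to rewrite $M^{1+p}\rho_L^{n+p}$ as a constant times $\int_L |x\cdot u|^p\,dx$) and invokes the L\'evy representation (Comment~3) to identify $\I^n_{-p}$ with unit balls of $n$-dimensional subspaces of $L_p$. Your explicit bookkeeping of the constants $s_{-p}$, $c_{-p,n}$, $\gam_n(1+p)$ and your observation that the $(\mathrm{i})/(\mathrm{ii})$ asymmetry reflects the $\lam\neq -2\ell$ versus $\lam=-2\ell$ dichotomy in Theorem \ref{tcl} are precisely the details the paper suppresses under ``ignoring a non-important positive constant factor.''
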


\subsection{Central sections of $\lam$-intersection
bodies}

It is known, that a cross-section $K\cap \eta$ of a body $K \in
I^n_k$ by any $m$-dimensional central plane $\eta$ is a
$k$-intersection body  in $\eta$ provided $1\le k<m<n$. This fact
was established in \cite[Proposition 3.17]{Mi1} by using Theorem
\ref{tthm21} and a certain  approximation procedure. Below we
present more general results, including sections of $k$-intersection
bodies of star bodies and the case of non-integer $k=\lam$. These
results are consequences of the restriction theorems from Section
3.3.
\begin{theorem}\label{tyi} Let $1\le
k<m<n$, $\eta\in G_{n,m}$. If $K=\I\B_k (L)$ in $\bbr^n$, then
$K\cap \eta =\I\B_k (\tilde L)$ in $\eta$, where the body $\tilde L
$ is defined by \be \rho_{\tilde L}^{m-k}(u)=c_{k,m,n}\intl_{S^{n-1}
\cap (\eta^\perp \oplus \bbr u )} \!\!\!\rho_{L}^{n-k} (w) |u\cdot
w|^{m-k-1}\, dw, \ee
$$
u \in S^{n-1} \cap \eta, \qquad c_{k,m,n}=\frac{(m-k)\,
\sig_{n-m}}{2(n-k)}\,. $$
\end{theorem}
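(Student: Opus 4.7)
The plan is to recognize the equation $K=\I\B_k(L)$ as an analytic identity on $S^{n-1}$, restrict it to the subsphere $S^{n-1}\cap\eta$ by invoking Theorem \ref{restrmi}(ii), and then read off the body $\tilde L$ from the resulting formula. By Definition \ref{df} applied with $\lam=k$ (so $s_k=1$ since $k\ge 1$), the hypothesis $K=\I\B_k(L)$ is equivalent to
\[
 \rho_K^{k}\;=\;c_{k,n}^{-1}\,M^{1-k}\rho_L^{n-k},\qquad c_{k,n}=\pi^{k-n/2}(n-k)/k.
\]
Since $\rho_L^{n-k}\in C_e(S^{n-1})$, the measure $d\mu(\theta)=c_{k,n}^{-1}\rho_L^{n-k}(\theta)\,d\theta$ lies in $\M_{e+}(S^{n-1})$. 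The hypothesis $1\le k<m$ together with $k\neq -2,-4,\ldots$ is precisely what is required to apply Theorem \ref{restrmi}(ii) with $\lam=k$.

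Next I would invoke Theorem \ref{restrmi}(ii) to conclude that the restriction of $\rho_K^k$ to $S^{n-1}\cap\eta$ — which is just $\rho_{K\cap\eta}^k$, since $\eta$ passes through the origin — satisfies
\[
 \rho_{K\cap\eta}^{k}\;=\;c_{k,n}^{-1}\,M^{1-k}_{S^{n-1}\cap\eta}\bigl(T_\eta^{k}\rho_L^{n-k}\bigr),
\]
where, by formula (\ref{teta}),
\[
 (T_\eta^{k}\rho_L^{n-k})(u)=\frac{\pi^{(m-n)/2}\,\sig_{n-m}}{2}\int_{S^{n-1}\cap(\eta^\perp\oplus\bbr u)}\rho_L^{n-k}(w)\,|u\cdot w|^{m-k-1}\,dw
\]
for $u\in S^{n-1}\cap\eta$.

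Then, defining $\rho_{\tilde L}^{m-k}$ by the integral stated in the theorem, a short bookkeeping of the three normalizing constants $c_{k,n}$, $c_{k,m}=\pi^{k-m/2}(m-k)/k$, and the prefactor $\pi^{(m-n)/2}\sig_{n-m}/2$ sitting inside $T_\eta^{k}$ shows that
\[
 \rho_{\tilde L}^{m-k}=\frac{c_{k,m}}{c_{k,n}}\,T_\eta^{k}\rho_L^{n-k};
\]
the product collapses to exactly $c_{k,m,n}=(m-k)\,\sig_{n-m}/[2(n-k)]$. Substituting back yields
\[
 \rho_{K\cap\eta}^{k}\;=\;c_{k,m}^{-1}\,M^{1-k}_{S^{n-1}\cap\eta}\,\rho_{\tilde L}^{m-k},
\]
which, by Definition \ref{df} now applied inside the ambient space $\eta$ of dimension $m$, is the statement $K\cap\eta=\I\B_k(\tilde L)$ in $\eta$.

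The argument is essentially mechanical once Theorem \ref{restrmi} is available; the one place demanding care is the consistency of normalizations across the ambient dimensions $n$ and $m$ and the kernel $T_\eta^k$, and that $\tilde L\in\K^m(\eta)$ is unambiguously defined — the latter being guaranteed by the injectivity of $M^{1-k}_{S^{n-1}\cap\eta}$ on even distributions (Lemma \ref{l1}, since $k\ge 1$ forces $2-m-k<1$).
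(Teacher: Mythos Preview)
Your proof is correct but follows a different route from the paper's own argument for this theorem. The paper proves Theorem \ref{tyi} directly at the level of volumes: it writes $\vol_k(K\cap\xi)=\vol_{n-k}(L\cap\xi^\perp)=\frac{\sigma_{n-k-1}}{n-k}(R_{n-k}\rho_L^{n-k})(\xi^\perp)$, then applies the Radon-transform restriction formula (\ref{yab2}) from Theorem \ref{restrf} to rewrite this as a constant times $(R_{m-k}T_\eta^k\rho_L^{n-k})(\xi^\perp\cap\eta)$, and reads off $\tilde L$. No passage through $M^{1-k}$ or Definition \ref{df} is needed; the argument stays entirely within the original volume-based Definition \ref{kyy1}.

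Your approach instead converts the hypothesis into the cosine-transform identity of Definition \ref{df} and invokes Theorem \ref{restrmi}(ii). This is exactly what the paper does for the more general Theorem \ref{tyif}; you have, in effect, specialized that proof to integer $\lambda=k$. The paper's direct Radon-transform argument is lighter here (Theorem \ref{restrf} is more elementary than Theorem \ref{restrmi}) and ties the result visibly to the geometric volume condition; your route has the virtue that it generalizes verbatim to non-integer $\lambda$. One small remark: your final comment about injectivity of $M^{1-k}_{S^{n-1}\cap\eta}$ guaranteeing that $\tilde L$ is well defined is unnecessary, since $\tilde L$ is given by an explicit integral formula for $\rho_{\tilde L}^{m-k}$; injectivity would only matter if you were trying to reconstruct $\tilde L$ from $K\cap\eta$.
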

\begin{proof} By (\ref{rraa}) and (\ref{yab2}) (with
$f=\rho_{L}^{n-k}$),
 \bea \vol_{k} (K\cap \xi)&=&\vol_{n-k}
(L\cap \xi^\perp)=
\frac{\sig_{n-k-1}}{n-k}(R_{n-k}\rho_{L}^{n-k})(\xi^\perp)\nonumber\\\label{mu}&=&
\frac{c\,\sig_{n-k-1}}{n-k}(R_{m-k}T_\eta^k\rho_{L}^{n-k})(\xi^\perp\cap
\eta)\\&=& \frac{\sig_{m-k-1}}{m-k}\, (R_{m-k}\rho_{\tilde
L}^{m-k})(\xi^\perp\cap \eta)=\vol_{m-k} (\tilde L\cap
\xi^\perp),\nonumber\eea as desired.
\end{proof}

Theorem \ref{tyi} has the following generalization.
\begin{theorem}\label{tyif} Let $1<m<n$, $\eta\in G_{n,m}$ and suppose that $\lam<m$, $\lam \neq 0$. If $K=\I\B_\lam (L)$ in $\bbr^n$, then
$K\cap \eta =\I\B_\lam (\tilde L)$ in $\eta$, where  the body
$\tilde L $ is defined by \be\label{yoav} \rho_{\tilde
L}^{m-\lam}(u)=\tilde
 c\intl_{S^{n-1} \cap (\eta^\perp \oplus \bbr u )}
\!\!\!\rho_{L}^{n-\lam} (w) |u\cdot w|^{m-\lam-1}\, dw, \ee
$$
u \in S^{n-1} \cap \eta, \qquad \tilde c=\left\{
\begin{array}{ll}\displaystyle{\frac{(m-\lam)\, \sig_{n-m}}{2(n-\lam)}} &
\mbox{\rm if $\lam \neq -2\ell, \; \ell \in
\bbn$}, \\
{}\\
\pi^{(m-n)/2}\, \sig_{n-m}/2 &  \mbox{\rm otherwise}.
\end{array}
\right.
$$ Moreover, if $K\in
\I^n_\lam$ in $\bbr^n$, then $K\cap \eta \in \I^m_\lam$ in $\eta$.
\end{theorem}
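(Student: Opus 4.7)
The plan is to reduce both statements of Theorem~\ref{tyif} to the restriction theorems of Section~3.3, most notably Theorem~\ref{restrmi}, applied to the defining identity of a $\lam$-intersection body. The key observation is that the restriction of $\rho_K$ to $S^{n-1}\cap\eta$ coincides with $\rho_{K\cap\eta}$, so restricting the even continuous function $s_\lam\rho_K^\lam$ on $S^{n-1}$ to the subsphere yields $s_\lam\rho_{K\cap\eta}^\lam$.

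First I would treat the generic case $\lam\neq -2\ell$, $\ell\in\bbn$. By Definition~\ref{df}, $K=\I\B_\lam(L)$ means
\[
s_\lam\rho_K^\lam=M^{1-\lam}\bigl(c_{\lam,n}^{-1}\rho_L^{n-\lam}\bigr),
\]
so I apply Theorem~\ref{restrmi}(ii) with $f=s_\lam\rho_K^\lam$ and $g=c_{\lam,n}^{-1}\rho_L^{n-\lam}\in C_e(S^{n-1})$. This yields, on $S^{n-1}\cap\eta$,
\[
s_\lam\rho_{K\cap\eta}^\lam=M^{1-\lam}_{S^{n-1}\cap\eta}\bigl(c_{\lam,n}^{-1}T_\eta^{\lam}\rho_L^{n-\lam}\bigr).
\]
I would then declare a body $\tilde L$ in $\eta$ via $\rho_{\tilde L}^{m-\lam}=(c_{\lam,m}/c_{\lam,n})\,T_\eta^\lam\rho_L^{n-\lam}$, so that the identity becomes $s_\lam\rho_{K\cap\eta}^\lam=c_{\lam,m}^{-1}M^{1-\lam}_{S^{n-1}\cap\eta}\rho_{\tilde L}^{m-\lam}$, which by Definition~\ref{df} in the ambient space $\eta\cong\bbr^m$ means $K\cap\eta=\I\B_\lam(\tilde L)$. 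A short bookkeeping calculation gives
\[
\frac{c_{\lam,m}}{c_{\lam,n}}=\pi^{(n-m)/2}\,\frac{m-\lam}{n-\lam},\qquad
\text{while}\ T_\eta^\lam\ \text{carries the factor}\ \pi^{(m-n)/2}\sig_{n-m}/2,
\]
producing the claimed constant $\tilde c=(m-\lam)\sig_{n-m}/[2(n-\lam)]$ and the integral formula~(\ref{yoav}).

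For the exceptional values $\lam=-2\ell$, I would repeat the same argument using the un-normalized operator $\tilde M^{1+2\ell}$ and the corresponding $T_\eta^{-2\ell}$ prescribed in Remark~\ref{neho} and Theorem~\ref{restrmi}(iii); since no Gamma factor appears in the normalization, the constant $\tilde c$ collapses to $\pi^{(m-n)/2}\sig_{n-m}/2$, matching the second branch of the definition of $\tilde c$ in the statement.

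For the final claim, let $K\in\I^n_\lam$. By Definition~\ref{dffl}, $s_\lam\rho_K^\lam=M^{1-\lam}\mu$ for some $\mu\in\M_{e+}(S^{n-1})$ (or $\rho_K^{-2\ell}=\tilde M^{1+2\ell}\mu$ when $\lam=-2\ell$). Applying Theorem~\ref{restrmi}(i) (respectively (iii)) with $f=s_\lam\rho_K^\lam$ produces a measure $\nu\in\M_{e+}(S^{n-1}\cap\eta)$ such that the restriction of $f$ to $S^{n-1}\cap\eta$ equals $M^{1-\lam}_{S^{n-1}\cap\eta}\nu$ (respectively $\tilde M^{1+2\ell}_{S^{n-1}\cap\eta}\nu$). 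Since that restriction is $s_\lam\rho_{K\cap\eta}^\lam$, this is exactly the condition for $K\cap\eta\in\I^m_\lam$ in $\eta$. The main obstacles, as I see them, are not conceptual but bookkeeping: keeping track of the two case distinctions (generic $\lam$ versus $\lam=-2\ell$) in parallel, and verifying that the constants $c_{\lam,n},c_{\lam,m}$ combine with the prefactor of $T_\eta^\lam$ to yield precisely the $\tilde c$ in~(\ref{yoav}); once Theorem~\ref{restrmi} is invoked, the structural content of the theorem is essentially immediate.
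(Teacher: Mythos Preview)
Your proposal is correct and follows essentially the same route as the paper: both parts are reduced to Theorem~\ref{restrmi} applied to $f=s_\lam\rho_K^\lam$ (with $g=c_{\lam,n}^{-1}\rho_L^{n-\lam}$ in the absolutely continuous case), and the constant bookkeeping you outline matches the paper's computation $c_{\lam,m}/c_{\lam,n}=\pi^{(n-m)/2}(m-\lam)/(n-\lam)$ combined with the prefactor of $T_\eta^\lam$ to yield~$\tilde c$. The exceptional case $\lam=-2\ell$ and the final measure-theoretic statement are handled exactly as you describe.
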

\begin{proof} Let $\lam \!\neq\! -2\ell, \, \ell \!\in\!
\bbn$, and let $\theta \!\in \!S^{n-1} \cap \eta$. By Definition
\ref{df}, $s_\lam\rho_{K}^{\lam}=c_{\lam, n}^{-1}
M^{1-\lam}\rho_{L}^{n-\lam}$, and Theorem \ref{restrmi} (with
$f=s_\lam\rho_{K}^{\lam}$ and $g=c_{\lam, n}^{-1}
\rho_{L}^{n-\lam}$) yields
$$ s_\lam\rho_{K}^{\lam} (\theta)=(M^{1-\lam}_{S^{n-1} \cap \eta}T_\eta ^\lam
[c_{\lam, n}^{-1} \rho_{L}^{n-\lam}]) (\theta)=
c_{\lam, m}^{-1}(M^{1-\lam}_{S^{n-1} \cap \eta} \rho_{\tilde L}^{m-\lam}) (\theta),
$$ where  $\rho_{\tilde L}^{m-\lam}=c\,T_\eta ^\lam \rho_{L}^{n-\lam}$, $c=\pi^{(n-m)/2} (m-\lam)/(n-\lam)$.
 By Definition \ref{df} and (\ref{teta}), we are done. If $\lam =
-2\ell, \, \ell \in \bbn$, then, as above,
$$\rho_{K}^{-2\ell} (\theta)=(\tilde M^{1+2\ell}_{S^{n-1} \cap \eta}T_\eta ^{-2\ell}
\rho_{L}^{n+2\ell}) (\theta)=
(M^{1-\lam}_{S^{n-1} \cap \eta} \rho_{\tilde L}^{m+2\ell}) (\theta)
$$ where  $\rho_{\tilde L}^{m+2\ell}=T_\eta ^{-2\ell} \rho_{L}^{n+2\ell}$. This gives (\ref{yoav}).

Furthermore, if $K\in \I^n_\lam$,  $\lam \neq -2\ell, \; \ell \in
\bbn$, then, by Definition \ref{dffl}, $s_\lam\rho_{K}^{\lam}=
M^{1-\lam}\mu$, $\mu\in \M_{e+}(S^{n-1})$. Hence, by Theorem
\ref{restrmi}, there is a measure $\nu\in \M_{e+}(S^{n-1} \cap
\eta)$ such that
 the restriction of $s_\lam\rho_{K}^{\lam}$ onto $S^{n-1} \cap \eta$ is represented as
$s_\lam\rho_{K}^{\lam}=M^{1-\lam}_{S^{n-1} \cap \eta} \nu$. It means
that $K\cap \eta \in \I^m_\lam$ in $\eta$. In the case $\lam =
-2\ell, \, \ell \in \bbn$, the argument is similar. \end{proof}

\section{Examples of $\lam$-intersection
bodies}

The definition of the classes $\I^n_\lam$ and $\I\B_{\lam,n}$ and
all known characterizations   are purely analytic. Unlike the case
$\lam=1$, when an intersection body of a star body is explicitly
defined by a simple geometric procedure, it is not clear how can we
construct  $\lam -$intersection bodies
 in the general case.  Below we give some examples, when the radial
function of a $\lam -$intersection body can be
 explicitly determined. These examples utilize the
 generalized cosine transforms.
\begin{example}\label{nin0} Let $\lam <1, \; \lam \neq 0$. This case is the simplest. Indeed,
given a non-negative measure $\mu$ on $S^{n-1}$,  the relevant $\lam
-$intersection body can be directly constructed by the formula
$\rho_K^\lam =M^{1-\lam} \mu$,  if $\lam \neq -2\ell, \ell \in
\bbn$, and $\rho_K^{-2\ell} =\tilde M^{1+2\ell}\mu$, otherwise. In
other words (cf. (\ref{afu})), \be \rho_K^\lam (u) =\int_{S^{n-1}}
|\theta \cdot u|^{-\lam} \,d\mu (\theta). \ee This fact (with $\lam$
replaced by $-p$) is a reformulation of Theorem 6.17 from \cite{K3},
which was stated in the language of isometric embeddings and relies
on the P. L\'evy characterization; see also Lemma 6.4 and Theorem
4.11 in \cite{K3}.
\end{example}
\begin{example}\label{ninn} If $n-3\le \lam<n, \; \lam>0$, then
 $\I^n_\lam$ includes
all origin-symmetric convex bodies in $\bbr^n$.
\end{example}
This fact is due to Koldobsky \cite[Corollary 4.9]{K3}. It can be
proved using a slight modification of the argument from \cite [Sec.
7]{R2} as follows. By Theorem \ref{aprr0} (c), it suffices to check
that for any o.s. convex body $K$, $M^{1+\lam -n}\rho_K^\lam\in
\M_{e+}(S^{n-1})$. For $\lam
 \ge n-1$, this is obvious. To handle the case $n-3\le \lam
 <n-1$, suppose first
 that $K$ is infinitely smooth. Using polar coordinates, for $ Re\, \a>0$, we can write \be\label{gnd}
 (M^{\a}\rho_K^{\a+n-1})(u)=(\a+n-1)\,\gam_n(\a)\int_K |x\cdot u|^{\a
 -1}\, dx.\ee
 Then $M^{1+\lam -n}\rho_K^\lam$ can be realized as analytic
 continuation ($a.c.$) at $\a=1+\lam -n$ of the right-hand side of
 (\ref{gnd}). The latter can be written as $$
I(\a)=2(\a+n-1)\gam_n(\a)\int_0^\infty t^{\a -1} A_{K,u} (t) \,dt,$$
$ A_{K,u} (t)=\vol_{n-1}(K\cap\{tu +u^\perp\}).$ Taking analytic
 continuation (see
\cite[Chapter 1]{GS}), for $-2<\a<0$ (which is equivalent to $n-3\le
\lam <n-1$) we get
$$a.c.I(\a)=c_1\int_0^\infty t^{\a -1} [A_{K,u} (t) - A_{K,u}
(0)]\,dt.
$$ Similarly, $a.c.I(\a)$ at $\a=-2$ (which corresponds to $\lam=n-3$)
is $c_2 A''_{K,u} (0)$. Following \cite{GS}, one can easily check
that constants $c_1$ and $c_2$ are negative.
 Since $K$ is convex, both analytic continuations are
positive, and therefore $M^{1+\lam -n}\rho_K^\lam>0$. If $K$ is an
arbitrary o.s. convex body, we approximate it in the radial metric
by smooth o.s. convex bodies $K_j$. Then for any test function $\om
\in \D_{+}(S^{n-1})$, by the previous step we have \bea (M^{1+\lam
-n}\rho_K^\lam, \om)&=&(\rho_K^\lam, M^{1+\lam
-n}\om)=\lim\limits_{j \to \infty}
(\rho_{K_j}^\lam, M^{1+\lam -n}\om)\nonumber\\
&=&\lim\limits_{j \to \infty} ( M^{1+\lam -n}\rho_{K_j}^\lam,\om)\ge
0.\nonumber\eea  Hence, by Theorem \ref {sm}, $M^{1+\lam
-n}\rho_K^\lam$ is a non-negative measure and the proof is complete.

\begin{example}\label{zhan}  If $\rho_K^\lam=
{\overset *  R}{}_{n-i}^{i-\lam}\nu$ for some $\nu \in \M_+ (G_{n,
n-i})$ and $\lam \le i<n$, then $K \in \I^n_\lam$.

Indeed, for any test function $\om \in \D(S^{n-1})$, by (\ref{con})
(with $\a=1-\lam$) we have \bea (\rho_K^\lam, \om)&=&({\overset *
R}{}_{n-i}^{i-\lam}\nu,\om)=(\nu,R_{n-i}^{i-\lam}\om)=(\nu^\perp,R_{n-i,
\perp}^{i-\lam}\om)\nonumber\\&=&c^{-1}(\nu^\perp, R_i M^{1-\lam}
\om)=c^{-1}( R_i^*\nu^\perp, M^{1-\lam} \om), \quad c=
\frac{2\pi^{(i-1)/2}}{\sig_{i-1}}.\nonumber\eea It means that for
$0<\lam \le i<n$ and $\nu \in \M_+ (G_{n, n-i})$,\be \rho_K^\lam=
{\overset * R}{}_{n-i}^{i-\lam}\nu \quad \Longleftrightarrow \quad
\{\rho_K^\lam=M^{1-\lam}\mu, \quad \mu=c^{-1}R_i^*\nu^\perp\}.\ee By
Definition \ref{dffl}, this gives the result. The particular case
$\lam = i$ implies the embedding into $\I^n_i$ of the Zhang's class
$\Z^n_{n-i}$; see Definition \ref{zyy1}. This embedding was proved
in \cite {K4} and \cite{Mi1}  in a different way; see also
\cite{Mi2}, where it is proved that $\Z^n_{n-i}$ is a proper subset
of $\I^n_i$ if $2\le i\le n -2$.
\end{example}

\begin{example} If $0<(i-1)/2 <\lam \le i<n$ and
$\rho_K^\lam=M^{i-\lam}\mu$ for some $\mu \in \M_+ (S^{n-1})$,  then
$K \in\I^n_\lam$.

Indeed, by Lemma \ref{lcon} (with $\a=i-\lam, \; \b=1-\lam$), $
\rho_K^\lam=M^{i-\lam}\mu= M^{1-\lam}A_{i-\lam,1-\lam }$, where
$A_{i-\lam,1-\lam }$ is an integral operator which preserves
positivity provided $i-\lam>1-\lam>1-n,\; (i-\lam)+(1-\lam)<2$. This
is just our case.
\end{example}
\begin{example} One can construct bodies $K \in\I^n_\lam$ from bodies
$L \in\I^n_\del$ by the formula $\rho_K=\rho_L^{\lam/\del}$ provided
$0<\del<\lam< n$.

Indeed,  by Definition \ref{dffl}, there is a measure $\mu \in \M_+
(S^{n-1})$ so that $ \rho_L^{\del}=M^{1-\del}\mu$. Then, by
 Lemma \ref{lcon} (with $\a=1-\del, \; \b=1-\lam$),  $
\rho_K^\lam=\rho_L^{\del}=M^{1-\del}\mu= M^{1-\lam}A_{1-\del,1-\lam
}\mu$, and we are done. This example generalizes the corresponding
result from \cite[p. 533, Statement (c)]{Mi1}, which
 was obtained in a different way for the case, when
 $\lam$ and $\del$ are integers.
\end{example}

\begin{example}\label {ngiv} Let
\be B^n_q=\{ x \in \bbr^n\,: ||x||_q=\Big (\sum_{k=1}^n |x_k|^q \Big
)^{1/q} \le 1\}.\ee
 If $0<q\le 2$, then $B^n_q \in\I^n_\lam$ for all
$\lam \in (0,n)$. If $2<q<\infty$, $\lam \in (0,n)$, then $B^n_q
\in\I^n_\lam$ if and only if $\lam \ge n-3$.

Both statements are due to Koldobsky. The first one follows from the
fact that for $0<q\le 2$
 the  Fourier transform of $||x||_q^{-\lam}$ is a positive
 $\S'$-distribution (see Lemmas 3.6 and 2.27 in \cite{K3}).
The second statement  is a reformulation of Theorem 4.13 from
\cite{K3}. The  ``if'' part is a consequence of Example \ref{ninn}.
\end{example}

\section {$(q,\ell)$-balls}

 In this section we consider one more example, which resembles Example \ref
 {ngiv}, but does not fall into its scope and requires a separate
 consideration. Let $$ x =(x', x'')\in \rn, \quad x' \in
\rnl=\overset{n-\ell}{\underset{j=1}{\oplus}}\,\bbr e_j,    \quad
x''\in \rl=\overset{n}{\underset{j=n-\ell +1}{\oplus}}\,\bbr e_j,$$
$e_1, \ldots, \e_n$ being coordinate unit vectors. Consider the
$(q,\ell)$-ball \be\label{ball} B^n_{q,\ell}=\{x:
||x||_{q,\ell}=(|x'|^q +|x''|^q)^{1/q} \le 1\}, \qquad q>0.\ee We
wonder, for which triples $(q,\ell,n)$, $B^n_{q,\ell}$ is a
$\lam$-intersection body. To study this problem, we need some
preparation. Consider the Fourier integral
 \be\label{gaql} \gam_{q,\ell}(\eta)=\int_{\rl}e^{-|y|^q} e^{i y\cdot
\eta}\, dy, \qquad \eta \in \rl, \qquad q>0.\ee The function
$\gam_{q,\ell}(\eta)$ is uniformly continuous on $\rl$ and vanishes
at infinity.
\begin{lemma}\label{pdd0} If $0<q\le 2$, then $\gam_{q,\ell}(\eta)>0$ for all $\eta \in
\rl$.
\end{lemma}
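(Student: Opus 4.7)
The plan is to establish positivity of $\gamma_{q,\ell}(\eta)$ via a subordination representation that writes $e^{-|y|^q}$ as a continuous superposition of Gaussians, so that its Fourier transform becomes a superposition of (strictly positive) Gaussian Fourier transforms.

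First I dispose of the case $q=2$ by the classical Gaussian computation $\gamma_{2,\ell}(\eta)=\pi^{\ell/2}\, e^{-|\eta|^2/4}>0$. For $0<q<2$ the key analytic input is that the function $\phi(t)=e^{-t^{q/2}}$ on $[0,\infty)$ is completely monotone. This is a standard consequence of the Bochner/Bernstein fact that $e^{-h}$ is completely monotone whenever $h$ is a Bernstein function, applied to the Bernstein function $t\mapsto t^{q/2}$ (admissible since $q/2\in(0,1]$). Bernstein's theorem then produces a positive, non-trivial Borel measure $\mu_q$ on $[0,\infty)$ with
\[ e^{-t^{q/2}}=\int_0^\infty e^{-st}\,d\mu_q(s),\qquad t\ge 0. \]

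Specializing at $t=|y|^2$ and substituting into (\ref{gaql}), the absolute convergence
\[ \int_{\rl}\int_0^\infty e^{-s|y|^2}\,d\mu_q(s)\,dy=\int_{\rl} e^{-|y|^q}\,dy<\infty \]
justifies Fubini's theorem, so
\[ \gamma_{q,\ell}(\eta)=\int_0^\infty\Big(\int_{\rl}e^{-s|y|^2+iy\cdot\eta}\,dy\Big)d\mu_q(s)=\int_0^\infty\Big(\frac{\pi}{s}\Big)^{\ell/2} e^{-|\eta|^2/(4s)}\,d\mu_q(s). \]
The integrand is strictly positive for every $s>0$ and every $\eta\in\rl$, and $\mu_q$ is not the zero measure (else $\phi$ would vanish identically); hence $\gamma_{q,\ell}(\eta)>0$ on all of $\rl$.

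The main obstacle is the verification of complete monotonicity of $\phi$; once that is in hand, Bernstein's theorem together with one application of Fubini reduces the claim to the trivial positivity of a Gaussian integral. An alternative self-contained route would be to invoke the probabilistic fact that $e^{-|\eta|^q}$ is the characteristic function of a symmetric $q$-stable law for $0<q\le 2$, but the subordination argument above is cleaner, applies uniformly across the range $0<q\le 2$, and delivers strict positivity directly.
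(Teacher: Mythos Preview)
Your proof is correct and follows essentially the same route as the paper: both invoke Bernstein's theorem to write $e^{-|y|^q}=\int_0^\infty e^{-s|y|^2}\,d\mu_q(s)$ for $0<q<2$, justify Fubini via $\int_{\rl}e^{-|y|^q}\,dy<\infty$, and then use the explicit Gaussian Fourier transform to conclude strict positivity (with $q=2$ handled directly). The only cosmetic differences are that the paper separates out the trivial case $\eta=0$ and cites Bernstein's theorem from Feller, whereas you supply the complete-monotonicity justification via Bernstein functions.
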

\begin{proof} (Cf. \cite [p. 44, for $\ell=1$]{K3}). For $\eta=0$,
the statement is obvious. It is known (see, e.g., \cite{SW}), that
\be\label{75}
[e^{-t|\cdot\,|^2}]^{\wedge}(\eta)=\pi^{\ell/2}t^{-\ell/2}e^{-|\eta|^2/4t},
\qquad t>0.\ee This gives the result for $q=2$. Let $0<q< 2$. By
Bernstein's theorem \cite[Chapter 18, Sec. 4]{F}, there is a
non-negative finite measure $\mu_q$ on $[0,\infty)$ so that $
e^{-z^{q/2}}=\int_0^\infty e^{-tz}\,d\mu_q (t)$, $z\in [0,\infty)$.
Replace $z$ by $|y|^2$ to get \be\label{751}
e^{-|y|^{q}}=\int_0^\infty e^{-t|y|^2}\,d\mu_q (t).\ee Then
(\ref{75}) yields \bea
 \gam_{q,\ell}(\eta)&=&\int_{\rl}e^{i y\cdot
\eta}dy\int_0^\infty e^{-t|y|^2}\,d\mu_q (t)= \int_0^\infty d\mu_q
(t)\int_{\rl}e^{i y\cdot \eta} e^{-t|y|^2}\,dy\nonumber\\&=&
\pi^{\ell/2}\int_0^\infty t^{-\ell/2}e^{-|\eta|^2/4t}\,d\mu_q
(t)>0.\nonumber\eea The Fubini theorem is applicable here, because,
by (\ref{751}),
$$
\int_{\rl}|e^{i y\cdot \eta}|dy\int_0^\infty
e^{-t|y|^2}\,d\mu_q=\int_{\rl} e^{-|y|^{q}}dy<\infty.$$
\end{proof}

Our next concern is the behavior of $\gam_{q,\ell}(\eta)$ when
$|\eta| \to \infty $.  If $q$ is
 even, then $e^{-|\cdot|^q}$ is a Schwartz
 function and therefore, $\gam_{q,\ell}$ is infinitely smooth
 and rapidly decreasing. In the general case, we have the following.
\begin{lemma}\label{gql} For any  $q>0$, \be\label{73p}
\lim\limits_{|\eta| \to \infty}|\eta|^{\ell
+q}\gam_{q,\ell}(\eta)=2^{\ell +q}\pi^{\ell/2-1}\Gam (1+q/2)\Gam
((\ell+q)/2)\, \sin (\pi q/2).\ee
\end{lemma}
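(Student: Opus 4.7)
The idea is that $e^{-|y|^q}$ is $C^\infty$ and rapidly decaying on $\rl\setminus\{0\}$, so the asymptotic of $\gam_{q,\ell}$ at infinity is governed entirely by the singularity at the origin, which (for $q\notin 2\bbn$) comes from the leading term $-|y|^q$ in the Taylor expansion $e^{-|y|^q}=\sum_{k\ge 0}(-1)^k|y|^{qk}/k!$. I expect the leading contribution to $\gam_{q,\ell}(\eta)$ to be the Fourier transform of a localized version of $-|y|^q$, computed via the Gelfand--Shilov distributional formula
$$\widehat{|y|^\a}(\eta)=C_\a\,|\eta|^{-\ell-\a},\qquad C_\a=\frac{2^{\ell+\a}\pi^{\ell/2}\,\Gam((\ell+\a)/2)}{\Gam(-\a/2)},$$
valid as tempered distributions for $\a\notin\{0,2,4,\ldots\}\cup\{-\ell,-\ell-2,\ldots\}$.

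To make this rigorous, fix an even cutoff $\phi\in C_c^\infty(\rl)$ with $\phi\equiv 1$ near $0$ and $\supp\phi$ compact, and split
$$e^{-|y|^q}=-\phi(y)|y|^q+\sum_{k=2}^N\frac{(-1)^k}{k!}\,\phi(y)|y|^{qk}+g_N(y),$$
where $g_N$ coincides with $e^{-|y|^q}$ off $\supp\phi$ and equals $1+\sum_{k>N}(-1)^k|y|^{qk}/k!$ on $\{\phi=1\}$; choosing $N\ge m/q$ makes $g_N\in C^m(\rl)$ and rapidly decaying, so $\hat g_N(\eta)=o(|\eta|^{-\ell-q})$ once $m$ is large enough. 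For each compactly supported term $\phi|y|^\a$, apply the convolution identity $\widehat{\phi|y|^\a}=(2\pi)^{-\ell}\hat\phi\ast\widehat{|y|^\a}$: since $\hat\phi\in\S(\rl)$ and $\int\hat\phi\,d\zeta=(2\pi)^\ell\phi(0)=(2\pi)^\ell$, the expansion $|\eta-\zeta|^{-\ell-\a}=|\eta|^{-\ell-\a}(1+O(|\zeta|/|\eta|))$ together with rapid decay of $\hat\phi$ yields
$$\widehat{\phi|y|^\a}(\eta)=C_\a|\eta|^{-\ell-\a}+O(|\eta|^{-\ell-\a-1}),\qquad |\eta|\to\infty.$$
The $k\ge 2$ terms therefore contribute $O(|\eta|^{-\ell-2q})$, and combining everything gives
$$|\eta|^{\ell+q}\gam_{q,\ell}(\eta)\longrightarrow -C_q=-\frac{2^{\ell+q}\pi^{\ell/2}\Gam((\ell+q)/2)}{\Gam(-q/2)}.$$

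To match (\ref{73p}), the reflection formula $\Gam(z)\Gam(1-z)=\pi/\sin(\pi z)$ with $z=-q/2$ gives $\Gam(-q/2)\Gam(1+q/2)=-\pi/\sin(\pi q/2)$, hence $-1/\Gam(-q/2)=\pi^{-1}\sin(\pi q/2)\,\Gam(1+q/2)$, which transforms $-C_q$ into exactly the right-hand side of (\ref{73p}). The values $q\in 2\bbn$, at which $\sin(\pi q/2)=0$, are automatic since there $e^{-|y|^q}\in\S(\rl)$ and $\gam_{q,\ell}$ decays faster than any polynomial. \textbf{Main obstacle.} The delicate technical step is making the convolution asymptotic $\widehat{\phi|y|^\a}\sim C_\a|\eta|^{-\ell-\a}$ rigorous with uniform control of the remainder, especially for exponents $\a=qk$ that may accidentally land on a pole of $\Gam(-\a/2)$ (handled by analytic continuation in $\a$), together with choosing the Taylor-peeling depth $N$ large enough, which for small $q$ requires many layers to be stripped before $g_N$ is smooth enough; everything after this is Gamma-function bookkeeping.
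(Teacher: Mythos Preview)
Your approach is correct and genuinely different from the paper's. The paper exploits radial symmetry to write $\gam_{q,\ell}(\eta)=I(|\eta|)$ as a one-dimensional Bessel integral, integrates by parts, substitutes to reach $s^{\ell+q}I(s)=(2\pi)^{\ell/2}A(s^{1/q})$, and then evaluates $\lim_{\del\to 0}A(\del)$ by replacing $J_{\ell/2}$ with the Hankel function $H^{(1)}_{\ell/2}$, rotating the contour via Cauchy's theorem, and invoking a tabulated McDonald-function integral. Your route instead reads the asymptotic of $\hat f$ at infinity from the leading singularity of $f=e^{-|y|^q}$ at the origin: you peel off $-\phi|y|^q$ (and higher Taylor terms) and use the distributional identity $\widehat{|y|^\a}=C_\a|\eta|^{-\ell-\a}$. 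The paper's method is more computational and self-contained (only classical special functions), while yours is more conceptual and explains structurally why the answer is governed by the Gelfand--Shilov constant; it also generalizes immediately to non-radial perturbations of $e^{-|y|^q}$.

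One remark on your ``main obstacle'': the convolution argument you sketch is cleanest for $-\ell<\a<0$, where $\widehat{|y|^\a}$ is locally integrable. For $\a=q>0$ the quickest rigorous route is not analytic continuation in $\a$ but the decomposition $\phi|y|^\a=|y|^\a-(1-\phi)|y|^\a$: since $(1-\phi)|y|^\a$ is a smooth symbol of order $\a$, repeated application of $(-\Delta)^N$ shows its Fourier transform is $O(|\eta|^{-2N})$ for every $N$ on $\{|\eta|>1\}$, so $\widehat{\phi|y|^\a}(\eta)=C_\a|\eta|^{-\ell-\a}+O(|\eta|^{-M})$ for all $M$, which is actually sharper than the $O(|\eta|^{-\ell-\a-1})$ you state. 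This also handles your worry about $qk\in 2\bbn$ automatically, since then $C_{qk}=0$ and $\phi|y|^{qk}$ has rapidly decreasing transform. With this in hand your argument goes through cleanly, and the Gamma-function bookkeeping at the end is correct.
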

\begin{proof} For
$\ell=1$, this statement can be found in \cite [Chapter 3, Problem
154]{PS} and  in \cite [p. 45]{K3}. In the general case, the proof
is more sophisticated  and relies on the properties of Bessel
functions. By the well-known formula for the Fourier transform of a
radial function (see, e.g., \cite{SW}), we write
$\gam_{q,\ell}(\eta)=I(|\eta|)$, where \bea
I(s)&=&(2\pi)^{\ell/2}s^{1-\ell/2}\int_0^\infty e^{-r^{q}}
r^{\ell/2} J_{{\ell/2-1}} (rs)\,
dr\nonumber\\&=&(2\pi)^{\ell/2}s^{-\ell}\int_0^\infty e^{-r^{q}}
\frac{d}{dr}\,[(rs)^{\ell/2} J_{{\ell/2}} (rs)]\, dr.\nonumber\eea
Integration by parts yields $$
I(s)=q(2\pi)^{\ell/2}s^{-\ell/2}\int_0^\infty e^{-r^{q}}
r^{\ell/2+q-1} J_{{\ell/2}} (rs)\, dr.$$ Changing variable $z=s^q
r^q$, we obtain
$$ s^{\ell +q}I(s)=(2\pi)^{\ell/2} A(s^{1/q}), \qquad A(\del)=
\int_0^\infty e^{-z\del} z^{\ell/2q} J_{\ell/2} (z^{1/q})\,dz.
$$
We actually have to compute the limit $A_0=\lim\limits_{\del \to 0}
A(\del)$. To this end, we invoke Hankel functions $H_\nu^{(1)} (z)$,
so that $ J_\nu (z)=Re \,H_\nu^{(1)} (z)$ if $z$ is real \cite{Er}.
Let $h_\nu (z)=z^\nu H_\nu^{(1)} (z)$. This is a single-valued
analytic function in the $z$-plane with cut $(-\infty, 0]$. Using
the  properties of the Bessel functions \cite{Er}, we  get
 \be\label {as}
\lim\limits_{z \to 0}h_\nu (z)=2^\nu \Gam (\nu)/\pi i,\ee \be\label
{as1} h_\nu (z) \sim \sqrt{2/\pi} \, z^{\nu -1/2}e^{iz-\frac{\pi i
}{2}(\nu +\frac{1}{2})}, \qquad z \to \infty.\ee Then we write
$A(\del)$ as $ A(\del)= Re \,\int_0^\infty e^{-z\del} h_{\ell/2}
(z^{1/q})\,dz$ and change the line of integration from $[0,\infty)$
to $\ell_\theta=\{z: z=re^{i\theta}, \; r>0\}$ for small $\theta<\pi
q/2$. By Cauchy's theorem, owing to (\ref{as}) and (\ref{as1}), we
obtain $ A(\del)= Re \,\int_{\ell_\theta} e^{-z\del} h_{\ell/2}
(z^{1/q})\,dz$. Since for $z=re^{i\theta}$, $ h_{\ell/2}
(z^{1/q})=O(1)$ when $r=|z|\to 0$ and $ h_{\ell/2}
(z^{1/q})=O(r^{(\ell -1)/2q} e^{-r^{1/q}\sin (\theta /q)})$ as $r\to
\infty$, by the Lebesgue theorem on dominated convergence, we get $
A_0=Re \,\int_{\ell_\theta}  h_{\ell/2} (z^{1/q})\,dz$. To evaluate
 the last integral, we again use analyticity and replace
$\ell_\theta$ by $\ell_{\pi q/2}=\{z: z=re^{i\pi q/2}, \; r>0\}$ to
get $$ A_0=Re \,\Big [e^{i\pi q/2}\int_0^\infty   h_{\ell/2}
(r^{1/q}e^{i\pi/2})\,dr\Big ].$$ To finalize calculations, we invoke
McDonald's function $K_\nu (z)$ so that
$$
h_\nu (z)=z^\nu H_\nu^{(1)} (z)=-\frac{2i}{\pi}(z e^{-i\pi/2})^\nu
K_\nu (z e^{-i\pi/2}).$$ This gives
$$
A_0=\frac{2q}{\pi}\, \sin (\pi q/2) \int_0^\infty s^{\ell/2 +q-1}
K_{\ell/2} (s)\, ds.
$$ The last integral can be explicitly evaluated by the formula 2.16.2
(2) from \cite {PBM}, and we obtain the result.
\end{proof}

Now we can proceed to studying $(q,\ell)$-balls $ B^n_{q,\ell}$; see
(\ref {ball}).
 There is an intimate connection
between  geometric properties of the balls $B^n_{q,\ell}$ and the
Fourier transform of the power function $||\cdot||_{q,\ell}^p$.  The
case $q=2$ is  well-known and associated with  Riesz potentials;
see, e.g., \cite{St}. The relevant case of $\ell^n_q$-balls, which
agrees with $\ell=1$ was considered in Example \ref{ngiv}.
\begin{lemma}\label{pdd} Let $q>0, \; \xi =(\xi', \xi'')\in \rn$,
 $ \gam_{q,\ell}(\xi'')$ and $ \gam_{q,n-\ell}(\xi')$
be the functions of the form (\ref{gaql}). We define

 \be\label{ftr} h_{p,q,\ell} (\xi)=\frac{q}{\Gam
(-p/q)}\int_0^\infty
t^{n+p-1}\,\gam_{q,n-\ell}(\xi't)\,\gam_{q,\ell}(\xi''t)\, dt.\ee

\noindent {\rm (i)} Let $\xi'\neq 0$ and  $\xi''\neq 0$. If $q$ is
 even, then the integral (\ref{ftr}) is absolutely convergent for
 all $p>-n$. Otherwise, it is absolutely convergent when
$-n<p<2q$. In these cases, $h_{p,q,\ell}\, (\xi)$ is a locally
integrable function away from the coordinate subspaces $\rl$ and
$\rnl$.

\vskip 0.3truecm

\noindent {\rm (ii)} If $-n<p<0$, then $h_{p,q,\ell}\, (\xi)\in
L^1_{loc}(\rn) \cap \S'(\rn)$ and $(||\cdot||_{q,\ell}^p)^\wedge
(\xi)=h_{p,q,\ell} (\xi)$ in the sense of $\S'$-distributions.
Specifically, for $\vp \in \S(\rn)$, \be \lng h_{p,q,\ell}\,, \hat
\vp\rng= (2\pi)^{n}\lng ||\cdot||_{q,\ell}^p\,, \vp\rng.\ee
\end{lemma}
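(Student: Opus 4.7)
The plan is to derive both parts from the subordination formula $r^p = \Gam(-p/q)^{-1}\int_0^\infty s^{-p/q-1} e^{-sr^q}\,ds$ (valid for $r>0$ and $p<0$), applied with $r = ||x||_{q,\ell} = (|x'|^q + |x''|^q)^{1/q}$. Combined with the elementary scaling identity $[e^{-s|y|^q}]^\wedge(\eta) = s^{-d/q}\gam_{q,d}(s^{-1/q}\eta)$ in dimension $d$ and the change of variable $t = s^{-1/q}$, this converts the Fourier transform of $||x||_{q,\ell}^p$ into the integral (\ref{ftr}) defining $h_{p,q,\ell}$. Both parts reduce to making this heuristic rigorous under the appropriate conditions on $p$.

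For part (i), I would first record the uniform bound
$$|\gam_{q,m}(\eta)| \leq C_m (1+|\eta|)^{-m-q}, \qquad \eta \in \bbr^m,$$
which follows from $|\gam_{q,m}(\eta)| \leq \gam_{q,m}(0) < \infty$ and from Lemma \ref{gql} at infinity (with Schwartz decay replacing polynomial decay when $q$ is an even integer, since then $e^{-|\cdot|^q}\in \S$). Inserting this into (\ref{ftr}) with $\xi' \neq 0$ and $\xi'' \neq 0$ bounds the integrand by a constant multiple of $t^{n+p-1}(1+|\xi'|t)^{-(n-\ell)-q}(1+|\xi''|t)^{-\ell-q}$; small-$t$ convergence is equivalent to $p > -n$, and large-$t$ convergence, using the compounded decay $t^{-n-2q}$, is equivalent to $p < 2q$ (and is automatic for even $q$). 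Local integrability on $\rn \setminus (\rl \cup \rnl)$ follows from this pointwise bound by Fubini, since on any compact subset of this open set $|\xi'|$ and $|\xi''|$ are bounded below.

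For part (ii), I would pair $h_{p,q,\ell}$ against a test function $\vp \in \S(\rn)$ and compute
$$\lng h_{p,q,\ell},\hat\vp\rng = \frac{q}{\Gam(-p/q)}\int_0^\infty t^{n+p-1}\,dt\int_\rn \gam_{q,n-\ell}(t\xi')\gam_{q,\ell}(t\xi'')\hat\vp(\xi)\,d\xi,$$
interchange the two integrations (Fubini), use Fourier inversion and the product structure of the scaled Gaussian to rewrite the inner integral as $(2\pi)^n t^{-n}\int_\rn \vp(x)\,e^{-t^{-q}||x||_{q,\ell}^q}\,dx$, and finally change variable $u = t^{-q}||x||_{q,\ell}^q$ in the resulting $t$-integral to extract $q^{-1}\Gam(-p/q)||x||_{q,\ell}^p$. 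This yields $\lng h_{p,q,\ell},\hat\vp\rng = (2\pi)^n\lng ||\cdot||_{q,\ell}^p,\vp\rng$, which is the asserted distributional identity. The local integrability of $h_{p,q,\ell}$ on $\rn$ is then inherited: for $-n < p < 0$ the function $||x||_{q,\ell}^p$ lies in $L^1_{loc}$, and the pointwise bound from (i) is already locally integrable away from the coordinate subspaces, which form a set of measure zero.

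The main obstacle will be the Fubini interchange in the paragraph above when $q > 2$ is not an even integer, since then $\gam_{q,m}$ can change sign and Lemma \ref{pdd0} offers no help with positivity. The remedy is to invoke the absolute majorant from part (i): for $-n < p < 0$ the hypothesis forces $p < 2q$, so the double integral
$$\int_0^\infty\!\!\int_\rn t^{n+p-1}|\hat\vp(\xi)|(1+|t\xi'|)^{-(n-\ell)-q}(1+|t\xi''|)^{-\ell-q}\,d\xi\,dt$$
is finite, thanks to the Schwartz decay of $\hat\vp$ in $\xi$ and the power bounds at $t = 0$ and $t = \infty$ established above. For even $q$ the justification is immediate because $\gam_{q,m}$ is itself a Schwartz function.
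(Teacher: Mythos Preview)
Your approach matches the paper's: the subordination formula, the scaling identity for $\gam_{q,m}$, and Fubini justified via the decay of $\gam_{q,m}$ supplied by Lemma~\ref{gql}. The paper runs part (ii) in the opposite direction (from $\lng ||\cdot||_{q,\ell}^p,\vp\rng$ to $\lng h_{p,q,\ell},\hat\vp\rng$), and in (i) it estimates $\int_{\e<|\xi'|,|\xi''|<a}|h_{p,q,\ell}|\,d\xi$ directly rather than first bounding pointwise, but these are cosmetic differences.

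One step needs repair: your argument that $h_{p,q,\ell}\in L^1_{loc}(\rn)$ in (ii). Local integrability of $h_{p,q,\ell}$ is not ``inherited'' from that of $||\cdot||_{q,\ell}^p$ on the space side (the Fourier transform does not preserve $L^1_{loc}$), and knowing the pointwise majorant from (i) is locally integrable \emph{away} from the coordinate subspaces says nothing about integrability \emph{near} them; that the bad set has measure zero is irrelevant. The paper's fix is to rerun the estimate of (i) over $\{|\xi'|<a,\ |\xi''|<a\}$ with $\e=0$: after the substitution $z'=t\xi',\,z''=t\xi''$ the large-$t$ contribution is bounded by $\|\gam_{q,n-\ell}\|_{1}\|\gam_{q,\ell}\|_{1}\int_1^\infty t^{p-1}\,dt<\infty$, since $p<0$ and each $\gam_{q,m}\in L^1$ by Lemma~\ref{gql}. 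Your Fubini majorant can also be made to yield this, but not by quoting ``the power bounds at $t=0$ and $t=\infty$'' from (i), which were for fixed $\xi$ with $\xi$-dependent constants; rather, bound the inner $\xi$-integral by $\|\hat\vp\|_{L^1}$ for $t\le 1$ and, after substituting $\eta=t\xi$, by $C\|\hat\vp\|_\infty\,t^{-n}$ for $t>1$.
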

\begin{proof} (i) For any $0<\e< a<\infty$,
\bea &&\int_{\e<|\xi'|<a} d\xi'\intl_{\e<|\xi''|<a}|h_{p,q,\ell}\,
(\xi;, \xi'')|\, d\xi''\nonumber\\&&\le\frac{q}{|\Gam
(-p/q)|}\intl_0^\infty t^{n+p-1}\,
dt\intl_{\e<|\xi'|<a}|\gam_{q,n-\ell}\,(\xi't)|\,d\xi'\intl_{\e<|\xi''|<a}|\gam_{q,\ell}\,(\xi''t)|\,
d\xi''\nonumber\\&&=\frac{q}{|\Gam (-p/q)|}\intl_0^\infty t^{p-1}\,
dt\intl_{t\e<|z'|<ta}|\gam_{q,n-\ell}\,(z')|\,dz'\intl_{t\e<|z''|<ta}|\gam_{q,\ell}\,(z'')|\,
dz''\nonumber\\
&&=\frac{q}{|\Gam (-p/q)|}\Big (\intl_0^1 +\intl_1^\infty \Big
)(...)=\frac{q}{|\Gam (-p/q)|}(I_1+I_2).\nonumber\eea The first
integral is dominated by
$$
c\,a^n \intl_0^1 t^{n+p-1}\, dt, \quad c=\sig_{n-\ell -1}\sig_{\ell
-1}\max_{z'}|\gam_{q,n-\ell}\,(z')|\,
\max_{z''}|\gam_{q,\ell}\,(z'')|$$ and is finite for $p>-n$. The
second integral can be estimated by making use of Lemma \ref {gql}.
Specifically,  if $q$ is not an even integer, then
$$
I_2\le c_\e\intl_1^\infty t^{p-1}\, dt\intl_{|z'|>t\e}
\frac{dz'}{|z'|^{n-\ell+q}}\intl_{|z''|>t\e}\frac{dz''}{|z''|^{\ell+q}}\le
c_{\e}\intl_1^\infty t^{p-2q-1}\, dt.
$$
If $q$ is even, then $ \gam_{q,\ell}$ and $ \gam_{q,n-\ell}$ are
rapidly decreasing and $ I_2\le c_{\e, a}\int_1^\infty t^{p-2m-1}\,
dt$ for any $m>0$. This gives what we need.

(ii) If $-n<p<0$, the same argument is applicable with $\e=0$. In
this case, $I_2$ does not exceed $ ||\gam_{q,n-\ell}||_1
||\gam_{q,\ell}||_1\int_1^\infty t^{p-1}\, dt$. The latter is finite
when $p<0$, because, by Lemma \ref {gql}, $ \gam_{q,n-\ell}$ and
$\gam_{q,\ell}$ are integrable functions on
 respective spaces. When $\xi \to \infty$, one can readily check that
$h_{p,q,\ell} \,(\xi)=O(|\xi|^m)$ for some  $ m>0$, and therefore,
$h_{p,q,\ell}\in \S'(\rn)$.

 To compute the Fourier transform
 $(||\cdot||_{q,\ell}^p)^\wedge (\xi)$,
we replace  $||x||_{q,\ell}^p$ by the formula
$$
||x||_{q,\ell}^p=\frac{q}{\Gam (-p/q)}\intl_0^\infty
t^{p-1}\,e^{-|x'/t|^q-|x''/t|^q}\, dt, \qquad p<0,$$ and note that
the Fourier transform of the function $x \to
e^{-|x'/t|^q-|x''/t|^q}$ is just $
\gam_{q,n-\ell}\,(\xi't)\,\gam_{q,\ell}\,(\xi''t)$. Then \bea \lng
||\cdot||_{q,\ell}^p)^\wedge\,, \hat \vp\rng&=& (2\pi)^{n}\lng
||\cdot||_{q,\ell}^p\,, \vp\rng\nonumber\\&=&
\frac{(2\pi)^{n}q}{\Gam (-p/q)}\intl_0^\infty t^{p-1}\, dt
\intl_{\rn}e^{-|x'/t|^q-|x''/t|^q} \overline{\vp (x)}\,
dx\nonumber\\&=&\frac{q}{\Gam (-p/q)}\intl_0^\infty t^{n+p-1}\, dt
\intl_{\rn}\gam_{q,n-\ell}\,(\xi't)\,\gam_{q,\ell}\,(\xi''t)\,\overline{\hat
\vp (\xi)}\, d\xi.\nonumber\eea Interchange of  the order of
integration in this argument can be easily justified using absolute
 convergence of integrals under consideration.
\end{proof}
\begin{theorem} If $\,  0<q\le 2$, $0<\ell<n$, then $B^n_{q,\ell}$ is
 a $\lam$-intersection body for any $0<\lam <n$.
\end{theorem}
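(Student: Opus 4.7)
The strategy is to apply Theorem \ref{aprr0}(b) together with Lemma \ref{pdd}. Since $0<\lambda<n$, it suffices to verify that $\|\cdot\|_{q,\ell}^{-\lambda}$ is a positive definite tempered distribution on $\bbr^n$, i.e., that its Fourier transform is a positive distribution.

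First, I would apply Lemma \ref{pdd}(ii) with $p=-\lambda$, which is legitimate since $-n<-\lambda<0$. This gives the pointwise identity
\[
(||\cdot||_{q,\ell}^{-\lambda})^\wedge(\xi) \;=\; h_{-\lambda,q,\ell}(\xi) \;=\; \frac{q}{\Gam(\lambda/q)}\int_0^\infty t^{n-\lambda-1}\gam_{q,n-\ell}(\xi' t)\,\gam_{q,\ell}(\xi'' t)\,dt
\]
at every $\xi=(\xi',\xi'')$ with $\xi'\neq 0$ and $\xi''\neq 0$, in the sense that $h_{-\lambda,q,\ell}\in L^1_{loc}(\rn)$ represents the Fourier transform as a tempered distribution.

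Next, I would establish positivity of $h_{-\lambda,q,\ell}(\xi)$ at each such $\xi$. The prefactor $q/\Gam(\lambda/q)$ is strictly positive since $\lambda>0$ and $q>0$. The key input is Lemma \ref{pdd0}: because $0<q\le 2$, the functions $\gam_{q,n-\ell}$ and $\gam_{q,\ell}$ are strictly positive on $\rnl$ and $\rl$ respectively. Therefore the integrand is positive for all $t>0$, and $h_{-\lambda,q,\ell}(\xi)>0$ on the open dense set $\{\xi'\neq 0,\;\xi''\neq 0\}$. Since the complementary coordinate subspaces $\rl$ and $\rnl$ have Lebesgue measure zero in $\rn$, the locally integrable function $h_{-\lambda,q,\ell}$ is $\ge 0$ almost everywhere on $\rn$.

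Finally, since $[\|\cdot\|_{q,\ell}^{-\lambda}]^\wedge$ is represented on all of $\rn$ by a non-negative locally integrable function, it is a positive tempered distribution. By Theorem \ref{aprr0} (with $s_\lambda=1$ for $\lambda>0$, $\lambda\notin\Lam_0$, which holds since $0<\lambda<n$), we conclude $B^n_{q,\ell}\in\I^n_\lambda$. The only subtle points in this plan are checking the hypotheses of Lemma \ref{pdd}(ii) (covered directly by $-n<-\lambda<0$) and noting that positivity off a null set suffices to make the distribution positive; the main substantive work---showing positivity and Fourier-representability of the integral kernel---was already accomplished in Lemmas \ref{pdd0} and \ref{pdd}.
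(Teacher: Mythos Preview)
Your proof is correct and follows essentially the same approach as the paper: invoke Lemma \ref{pdd0} to get strict positivity of $\gam_{q,n-\ell}$ and $\gam_{q,\ell}$ for $0<q\le 2$, apply Lemma \ref{pdd}(ii) with $p=-\lambda$ to identify the Fourier transform as the nonnegative locally integrable function $h_{-\lambda,q,\ell}$, and conclude via Theorem \ref{aprr0}. Your version is in fact slightly more careful than the paper's in spelling out that positivity off the measure-zero coordinate subspaces suffices for positivity of the distribution.
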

\begin{proof}
Owing to  Lemma \ref{pdd0}, the function (\ref{ftr}) (with $p$
replaced by $-\lam$) is positive, and therefore, by Lemma \ref{pdd},
 $||\cdot||_{q,\ell}^{-\lam}$ represents a positive definite
 distribution. Now the result follows by Theorem \ref{aprr0}.
\end{proof}

Consider the case $q>2$. In this case $B^n_{q,\ell}$ is convex, and,
owing to Example \ref{ninn}, $B^n_{q,\ell}\in \I^n_\lam$ for all
$n-3\le \lam <n$. What about $\lam <n-3$?  This case is especially
intriguing.
\begin{proposition}\label{nakz}
 If $q>2$ and $0<\lam <\max (n-\ell, \ell)-2$, then
$||\cdot||_{q,\ell}^{-\lam}$ is not a positive definite distribution
and therefore,
 $B^n_{q,\ell}\not\in \I^n_\lam$ .
\end{proposition}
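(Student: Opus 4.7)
Proof proposal:

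By the $\ell\leftrightarrow n-\ell$ symmetry of $B^n_{q,\ell}$, I may assume $\max(n-\ell,\ell)=n-\ell$, so the claim becomes $B^n_{q,\ell}\notin\I^n_\lam$ for $0<\lam<n-\ell-2$. By Theorem \ref{aprr0}, this is equivalent to $\|\cdot\|_{q,\ell}^{-\lam}$ failing to be positive-definite on $\rn$, and by Lemma \ref{pdd} it suffices to find a point $\xi=(\xi',\xi'')$ with $\xi',\xi''$ both non-zero at which the continuous function $h_{-\lam,q,\ell}(\xi)$ of (\ref{ftr}) is strictly negative; continuity then extends negativity to an open set, contradicting positivity of $(\|\cdot\|_{q,\ell}^{-\lam})^\wedge$ on $\rn\setminus\{0\}$.

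I propose studying the boundary limit as $\xi'\to 0$ with $\xi''=\om''\in S^{\ell-1}\subset\rl$ fixed. Combining the trivial bound $|\gam_{q,n-\ell}(\xi't)|\le\gam_{q,n-\ell}(0)$ with the decay $|\gam_{q,\ell}(t\om'')|\le C t^{-\ell-q}$ from Lemma \ref{gql} supplies a uniform integrable majorant for the integrand of (\ref{ftr}) on $(0,\infty)$ (using $0<\lam<n$), so by dominated convergence
\[ L(\om'')\,=\lim_{\xi'\to 0}h_{-\lam,q,\ell}(\xi',\om'')=\frac{q\,\gam_{q,n-\ell}(0)}{\Gam(\lam/q)}\,J_\lam,\quad J_\lam=\intl_0^\infty t^{n-\lam-1}\gam_{q,\ell}(t\om'')\,dt. \]
Substituting the radial Fourier representation $\gam_{q,\ell}(t\om'')=(2\pi)^{\ell/2}t^{1-\ell/2}\intl_0^\infty r^{\ell/2}J_{\ell/2-1}(rt)e^{-r^q}\,dr$, swapping integrals, evaluating the inner $t$-integral via the Weber--Schafheitlin identity (by analytic continuation in the exponent), and finishing with a Gamma integral in $r$, one obtains the closed form
\[ J_\lam=C_{n,\ell,q}\,\frac{\Gam((n-\lam)/2)\,\Gam((\lam-(n-\ell))/q)}{\Gam((\lam-(n-\ell))/2)},\qquad C_{n,\ell,q}>0. \]

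The hard part is the sign of $J_\lam$. Write $\a=(\lam-(n-\ell))/2$ and $\b=2\a/q$; then $\a<-1$ (since $\lam<n-\ell-2$) and $|\b|<|\a|$ (since $q>2$), and the sign of $J_\lam$ equals $\mathrm{sgn}[\Gam(\b)/\Gam(\a)]$. For $\lam$ in a right-neighbourhood of $n-\ell-2$, $\a\in(-2,-1)$ and $\b\in(-1,0)$, giving $\Gam(\a)>0$ and $\Gam(\b)<0$, hence $J_\lam<0$ by the direct one-sided test. For deeper $\lam$, where $\a$ and $\b$ can lie in intervals of matching parity and the one-sided test fails, I would combine (i) the dual boundary limit $\xi''\to 0$ with $\xi'\in S^{n-\ell-1}$ (yielding the analogous formula with $\ell$ and $n-\ell$ exchanged) and (ii) the restriction theorem, Theorem \ref{tyif}, applied to $\eta=\rnl\oplus\bbr e$ for $e\in S^{\ell-1}$: since $B^n_{q,\ell}\cap\eta=B^{n-\ell+1}_{q,1}$, membership $B^n_{q,\ell}\in\I^n_\lam$ would force $B^{n-\ell+1}_{q,1}\in\I^{n-\ell+1}_\lam$, reducing the proposition for $(n,\ell)$ to its analog for $(n-\ell+1,1)$, which is handled inductively in the dimension with base case $B^2_{q,1}=B^2_q$ covered by Example \ref{ngiv}. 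Once $L(\om'')<0$ is secured, the negativity of $h_{-\lam,q,\ell}$ on an open set contradicts positive-definiteness, and Theorem \ref{aprr0} gives $B^n_{q,\ell}\notin\I^n_\lam$.
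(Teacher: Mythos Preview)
Your step (ii) — slicing $B^n_{q,\ell}$ by $\eta=\rnl\oplus\bbr e$ via Theorem~\ref{tyif} to get $B^{n-\ell+1}_{q,1}$ — is exactly the move the paper makes, and it is the only move needed.  The preliminary computation of the boundary limit $L(\om'')$ and the closed form for $J_\lam$ are correct but, as you yourself note, the sign analysis $\mathrm{sgn}\,[\Gam(\b)/\Gam(\a)]$ only gives negativity in a right neighbourhood of $\lam=n-\ell-2$; this machinery does not buy you the full range and can be dropped.

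The genuine gap is in how you finish after the restriction.  Two problems:
\begin{itemize}
\item[(a)] Example~\ref{ngiv} concerns the $\ell^m_q$-ball $B^m_q$, whose norm is $\big(\sum|x_i|^q\big)^{1/q}$.  The section you obtain is $B^m_{q,1}$ with $m=n-\ell+1$, whose norm is $\big((x_1^2+\cdots+x_{m-1}^2)^{q/2}+|x_m|^q\big)^{1/q}$.  These coincide only for $m=2$, so Example~\ref{ngiv} does not apply to $B^m_{q,1}$ for $m>2$.
\item[(b)] Your proposed induction on $m$ does not close.  Slicing $B^m_{q,1}$ by a hyperplane that drops one coordinate from the first block gives $B^{m-1}_{q,1}$; the induction hypothesis then yields $B^{m-1}_{q,1}\notin\I^{m-1}_\lam$ only for $0<\lam<(m-1)-3=m-4$, whereas you need $0<\lam<m-3$.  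Each step loses exactly one unit of the $\lam$-range, so the interval $[m-4,\,m-3)$ is never covered and the induction never reaches a nontrivial base case (for $m=2$ the hypothesis $0<\lam<-1$ is vacuous).
\end{itemize}

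The paper avoids this by not inducting at all: after the single restriction to $B^{n-\ell+1}_{q,1}$ it invokes Koldobsky's second-derivative test \cite[Theorem~4.19]{K3}, which directly shows that $(|x'|^q+|x_m|^q)^{-\lam/q}$ is not positive definite on $\bbr^m$ when $q>2$ and $0<\lam<m-3$.  Replace your induction by that citation (or reproduce the second-derivative argument) and the proof is complete.
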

\begin{proof}  Let $0<\lam<n-\ell -2$ and suppose the contrary, that
$B^n_{q,\ell}\in \I^n_\lam$. Consider the section of $B^n_{q,\ell}$
by the $(n-\ell +1)$-dimensional  plane $\eta=\bbr e_n\oplus\rnl$.
By Theorem \ref{tyif},  $B^n_{q,\ell} \cap \eta \in \I^{n-\ell
+1}_{\lam}$ in $\eta$, and therefore
$$
||x_n e_n + x''||^{\lam}_{q,\ell}=(|x_n|^q +|x''|^q)^{-\lam/q}
$$
is a positive definite distribution in  $\eta$. By the second
derivative text (see \cite[Theorem 4.19]{K3}) this is impossible if
$0<\lam<n-\ell -2$. A similar contradiction can be obtained if we
 assume $0<\lam<\ell -2$ and consider the section of $B^n_{q,\ell}$ by the $(\ell
+1)$-dimensional  plane $\bbr e_1\oplus\rl$.
\end{proof}
Proposition \ref{nakz} can be proved  without using  the second
derivative text and Theorem \ref{tyif} on sections of
$\lam$-intersection bodies; see \cite{R4}. The bounds for $\lam$
appear to be the same.

\noindent {\bf Open problem.} {\it  Let  $q>2, \; \ell>1$. Is
$B^n_{q,\ell}$ a
 $\lam$-intersection body if} $\max (n-\ell, \ell)-2 <\lam <n-3$?

 This problem does not occur in the case $\ell=1$ as in Example
 \ref{ngiv}.

\section {The generalized cosine transforms and comparison of
volumes} For $1<i<n$, let $\vol_i ( \cdot)$ denote the
$i$-dimensional volume function. Suppose that $i$ is fixed, and let
$A$ and $B$ be  o.s.
 convex bodies  in $\bbr^n$ satisfying \be\label{cons} \vol_i(A \cap \xi)
\le \vol_i(B \cap \xi) \quad \forall \xi \in G_{n,i}. \ee Does it
follow that \be\label{cons1} \vol_n(A) \le \vol_n(B) \quad \text{\rm
?} \ee

This question is known as the  Generalized Busemann-Petty Problem
(GBP); see  \cite {G}, \cite {RZ}, \cite {Z1}.
\begin{theorem}\label{bhb} If GBP
(\ref{cons})-(\ref{cons1}) has an affirmative answer,  then every
smooth origin-symmetric  convex body with positive curvature  in
$\bbr^n$ is an $(n-i)$-intersection body.
\end{theorem}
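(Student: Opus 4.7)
\medskip
\noindent\textbf{Proof plan.}
The strategy is to argue by contrapositive: assume $K$ is smooth origin-symmetric convex with positive curvature but $K\notin\I^n_{n-i}$, and manufacture a pair $(K,B_\e)$ of test bodies violating GBP. Setting $\lam=n-i$ (so $s_{n-i}=1$ since $0<n-i<n$), Theorem~\ref{aprr0}(c) says $K\in\I^n_{n-i}$ iff $f:=M^{1-i}\rho_K^{n-i}$ is a non-negative measure on $S^{n-1}$. Because $K$ is smooth with positive curvature, $\rho_K\in C^\infty_e(S^{n-1})$, and since $M^{1-i}$ preserves $\D_e(S^{n-1})$ by Lemma~\ref{l1}, $f$ is itself a smooth even function. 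The assumption $K\notin\I^n_{n-i}$ therefore forces $f(\theta_0)<0$ at some $\theta_0\in S^{n-1}$, and one can choose a non-negative even $h\in\D_e(S^{n-1})$ supported in a small neighborhood of $\{\pm\theta_0\}$ with $\int_{S^{n-1}}fh\,d\theta<0$.

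Next I put $g:=M^{1-i}h\in\D_e(S^{n-1})$ and record two key identities. First, Lemma~\ref{l2} with $\a=1-i$ gives
$$ R_i g = R_iM^{1-i}h = c\,R^{0}_{n-i,\perp}h = c'\,(R_{n-i}h)(\cdot^\perp), $$
which is non-negative on $G_{n,i}$ because $h\ge 0$. Second, the self-adjointness of $M^{1-i}$ on $\D_e(S^{n-1})$ (already built into the distributional definition of $M^\a$ in Section~3.1) yields
$$ \int_{S^{n-1}}\rho_K^{n-i}\,g\,d\theta = \int_{S^{n-1}}M^{1-i}\rho_K^{n-i}\cdot h\,d\theta = \int_{S^{n-1}}fh\,d\theta < 0. $$

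Finally, I construct the counterexample by perturbation: define $B_\e\in\K^n$ by $\rho_{B_\e}^i := \rho_K^i + \e\, g$. For $\e>0$ sufficiently small, $\rho_{B_\e}$ is a smooth positive function and, since positive curvature is an open condition in the $C^2$-topology and $g$ is smooth, $B_\e$ is smooth origin-symmetric convex with positive curvature. Using $\vol_k(K\cap\xi)=\frac{\sig_{k-1}}{k}(R_k\rho_K^k)(\xi)$, we obtain the \emph{exact} identity
$$ \vol_i(B_\e\cap\xi)-\vol_i(K\cap\xi) = \frac{\e\,\sig_{i-1}}{i}(R_ig)(\xi) \ge 0, \qquad \xi\in G_{n,i}, $$
whereas first-order expansion of $\rho_{B_\e}^n=(\rho_K^i+\e g)^{n/i}$ in the formula $\vol_n(B_\e)=\frac{1}{n}\int_{S^{n-1}}\rho_{B_\e}^n\,d\theta$ yields
$$ \vol_n(B_\e)-\vol_n(K) = \frac{\e}{i}\int_{S^{n-1}}\rho_K^{n-i}\,g\,d\theta + O(\e^2), $$
whose leading coefficient equals $\frac{1}{i}\int fh\,d\theta<0$. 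Consequently, for all sufficiently small $\e>0$, the pair $(K,B_\e)$ consists of smooth o.s.\ convex bodies with positive curvature satisfying $\vol_i(K\cap\xi)\le\vol_i(B_\e\cap\xi)$ for every $\xi\in G_{n,i}$ yet $\vol_n(K)>\vol_n(B_\e)$, in direct contradiction with the affirmative answer to GBP.

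The most delicate point in this scheme is the perturbation step: one must verify that $B_\e$ is a genuine test body for GBP (i.e., smooth o.s.\ convex with positive curvature) and that the $O(\e^2)$ error in the total-volume expansion is absorbed by the strictly negative first-order term. Both issues are routine thanks to the smoothness of $g=M^{1-i}h$ and the strict negativity of $\int fh\,d\theta$, but they are precisely what converts the infinitesimal identity supplied by Lemmas~\ref{l1}--\ref{l2} into an honest geometric counterexample, without invoking Zhang's Hahn--Banach argument or the embedding $\Z^n_i\subset\I^n_{n-i}$.
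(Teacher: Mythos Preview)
Your proof is correct and follows essentially the same contrapositive scheme as the paper: identify the smooth function $f=M^{1-i}\rho_K^{n-i}$ (the paper writes $\rho_B^{n-i}=M^{1+i-n}\varphi$, which is the same thing via Lemma~\ref{l1}), pick a bump function $h\ge 0$ where $f<0$, and perturb the $i$-th power of the radial function by a multiple of $M^{1-i}h$, using Lemma~\ref{l2} to control the section volumes exactly. The one noteworthy difference is in the total-volume comparison: you Taylor-expand $(\rho_K^i+\e g)^{n/i}$ and absorb an $O(\e^2)$ remainder, whereas the paper gets an \emph{exact} inequality from $(\rho_B^{n-i},\rho_B^i)<(\rho_B^{n-i},\rho_A^i)$ by H\"older, which avoids any asymptotic bookkeeping; both arguments are valid.
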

\begin{proof}
Suppose that $B$ is an o.s. convex body in $\bbr^n$ so that the
radial function $\rho_B$ is infinitely smooth, the boundary of $B$
has  a positive curvature and $B\notin \I^n_{n-i}$. By Definition
\ref{dffl},  there is a function  $\vp\in \D_{e}(S^{n-1})$, which is
negative on some open origin-symmetric set $\Om
 \subset S^{n-1}$ and such that $ \rho_B^{n-i}=M^{1+i-n} \vp$. We
 choose a function $h\in
\D_{e}(S^{n-1})$ so that $h \not\equiv 0$, $h(\theta)\ge 0$ if
$\theta \in \Om$ and $h(\theta)\equiv 0$ otherwise. Define an o.s.
smooth  body $A$ by $ \rho_A^{i}=\rho_B^{i}-\e M^{1-i} h$, $\e>0$.
If $\e$ is small enough, then $A$ is convex. Since by (\ref{con}),
$R_i M^{1-i} h=c\,
 R_{n-i,\perp}^{0}h \ge 0$, then $R_i\rho_A^{i}\le R_i\rho_B^{i}$,
 which gives (\ref{cons}). On the other hand, by (\ref{st}),
$$ ( \rho_B^{n-i},\rho_B^{i} -\rho_A^{i})=\e(M^{1+i-n} \vp, M^{1-i}
h)=\e(\vp, h)<0,$$ or $( \rho_B^{n-i},\rho_B^{i})<(
\rho_B^{n-i},\rho_A^{i})$. By H\"older's inequality, this implies
 $ \vol_n(B)<\vol_n(A)$, which contradicts (\ref{cons1}).
\end{proof}

\begin{remark} As we noted in Introduction, Theorem 8.1 is not new,
and its proof given in \cite{K4} relies on a sequence of deep facts
from functional analysis. The proof presented above is much more
elementary and constructive. For instance, it allows us to keep
invariance properties of the bodies under control. This advantage
was essentially used in our paper \cite{R4}.
\end{remark}

Theorem \ref{bhb} and  Proposition \ref{nakz} imply the following
\begin{corollary} Let $1\le \ell \le n/2; \; i>\ell +2$,
$B=B^n_{4,\ell}$ {\rm (see (\ref{ball}))}. Then there is a smooth
o.s. convex body $A$ in $\bbr^n$ so that (\ref{cons}) holds but
(\ref{cons1}) fails.
\end{corollary}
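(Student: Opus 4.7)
The plan is to combine Proposition \ref{nakz} with the constructive argument in the proof of Theorem \ref{bhb}. First I would verify that $B=B^n_{4,\ell}\not\in\I^n_{n-i}$: since $q=4>2$ and the assumption $\ell\le n/2$ yields $\max(n-\ell,\ell)=n-\ell$, the hypothesis $i>\ell+2$ rewrites as $0<n-i<\max(n-\ell,\ell)-2$, and Proposition \ref{nakz} applies directly.

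Next I would mimic the proof of Theorem \ref{bhb} with this specific $B$. Because $\rho_B$ is smooth on $S^{n-1}$ and $M^{1+i-n}$ is an automorphism of $\D_e(S^{n-1})$ by Lemma \ref{l1}, there is a unique $\vp\in\D_e(S^{n-1})$ with $\rho_B^{n-i}=M^{1+i-n}\vp$; since $B\not\in\I^n_{n-i}$, Definition \ref{dffl} forces $\vp$ to be strictly negative on some nonempty open origin-symmetric set $\Om\subset S^{n-1}$. I would then pick an even non-trivial $h\in\D(S^{n-1})$ with $h\ge 0$ and $\supp h\subset\Om$, and set $\rho_A^i=\rho_B^i-\e\,M^{1-i}h$ for small $\e>0$. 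Lemma \ref{l2} with $\a=1-i$, together with (\ref{lim}), gives $R_iM^{1-i}h=c\,R_{n-i,\perp}^0 h\ge 0$, hence $R_i\rho_A^i\le R_i\rho_B^i$, which is (\ref{cons}); meanwhile the involution $M^{1+i-n}M^{1-i}=\mathrm{Id}$ on $\D_e(S^{n-1})$ (Lemma \ref{l1}) yields $(\rho_B^{n-i},\rho_B^i-\rho_A^i)=\e(\vp,h)<0$, and H\"older's inequality on $S^{n-1}$ then forces $\vol_n(A)>\vol_n(B)$, so (\ref{cons1}) fails.

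The hard part will be ensuring that $A$ is actually convex, since Theorem \ref{bhb} invokes strictly positive Gaussian curvature of the body, whereas $\partial B^n_{4,\ell}$ has curvature vanishing at boundary points where $x'=0$ or $x''=0$. I would circumvent this by choosing $h$ supported in the subset of $\Om$ bounded away from the two degenerate great spheres $S^{n-1}\cap\rl$ and $S^{n-1}\cap\rnl$, so that the perturbation is small in $C^2$ near those directions; such an $h$ exists because the smooth function $\vp$ cannot be non-negative on the complement of these great spheres without being non-negative everywhere by continuity, which would contradict $B\not\in\I^n_{n-i}$. An alternative and possibly cleaner route is to first approximate $B^n_{4,\ell}$ in a sufficiently strong topology by a smooth o.s.~convex body with strictly positive curvature, using closedness of $\I^n_{n-i}$ in the radial metric (Theorem \ref{tcl}) to keep the approximant outside $\I^n_{n-i}$, and to carry out the construction on the approximant; this is essentially the path referenced in \cite{R4}.
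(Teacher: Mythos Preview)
Your approach matches the paper's exactly: the Corollary is stated there as a direct consequence of Theorem~\ref{bhb} and Proposition~\ref{nakz}, with no further argument given. You have in fact gone beyond the paper by flagging the curvature issue---$\partial B^n_{4,\ell}$ is indeed flat in the $x''$-directions at points with $x''=0$ (and symmetrically)---which the paper's one-line proof ignores.

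That said, neither of your proposed fixes closes this gap as stated. In the first, restricting the support of $h$ does not localize the perturbation $M^{1-i}h$: the operator $M^{1-i}$ is a spherical convolution (cf.\ (\ref{af}), (\ref{55})), so $M^{1-i}h$ is generically nonzero on all of $S^{n-1}$, and $\rho_A$ is altered even at the flat directions of $\partial B$; your claim that ``the perturbation is small in $C^2$ near those directions'' is therefore unjustified, and nothing prevents $A$ from being nonconvex there. (Your side remark that $\vp$ must be negative somewhere off the degenerate great spheres is correct, but it only guarantees the \emph{existence} of such an $h$, not the convexity of the resulting $A$.) In the second workaround, replacing $B^n_{4,\ell}$ by a nearby strictly-positively-curved approximant $\tilde B\notin\I^n_{n-i}$ does yield, via Theorem~\ref{bhb}, a valid counterexample $(A,\tilde B)$ to GBP---enough for the intended application, including the Bourgain--Zhang theorem---but not literally for $B=B^n_{4,\ell}$: the volume margin $\vol_n(A)-\vol_n(\tilde B)$ produced by the construction is only $O(\e)$ and need not exceed $\vol_n(B)-\vol_n(\tilde B)$. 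So the curvature subtlety is real, it is shared by the paper's own proof, and settling it for $B=B^n_{4,\ell}$ itself requires a more careful argument than either workaround provides.
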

Setting $\ell =1$ in this statement, we obtain the well-known
Bourgain-Zhang theorem, which states that GBP has a negative answer
when $3<i<n$; see \cite {BZ}, \cite {K3}, \cite {RZ} on this
subject. For $i=2$ and $i=3$ ($n\ge 5$) the GBP is still open. An
 affirmative answer in these cases was obtained in \cite{R4} for
 bodies having a certain additional symmetry.

\section {Appendix}

 Every positive distribution $F\in \S'(\bbr^n)$
 is given by a tempered non-negative
 measure $\mu$, i.e.,  $\lng F, \phi\rng=\int \phi (x) d\mu (x)$;
 see, e.g., \cite[p.147]{GV}).
 For convenience of the reader, we present a similar fact for the
 sphere.
\begin{theorem}\label{sm}
A distribution $f \in \D'(S^{n-1})$ is positive if and only if there
is a measure $\mu \in \M_+ (S^{n-1})$ such that
$$(f,\vp)=\int_{S^{n-1}}\vp (\theta)
d\mu (\theta)\qquad \forall \vp \in  \D(S^{n-1}).$$
\end{theorem}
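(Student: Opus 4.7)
The ``if'' direction is immediate: any $\mu\in\M_+(S^{n-1})$ defines a positive distribution via $\vp\mapsto\int\vp\,d\mu$. For the converse, the plan is to reduce to the classical Riesz representation theorem on the compact space $S^{n-1}$, using the fact that $S^{n-1}$ carries the constant function $1$, which is already a smooth test function. Working with real-valued test functions throughout (complex ones split into real and imaginary parts), I would set $c=(f,1)$, which is a well-defined non-negative number since $1\in\D(S^{n-1})$ and $f$ is positive.

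First I would establish the key continuity estimate: for every real $\vp\in\D(S^{n-1})$,
\[
|(f,\vp)|\le c\,\|\vp\|_{C(S^{n-1})}.
\]
This follows at once from positivity applied to the non-negative test functions $\|\vp\|_\infty\pm\vp$, which belong to $\D(S^{n-1})$ because the constant $\|\vp\|_\infty$ is smooth on the sphere.

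Next I would invoke density of $\D(S^{n-1})$ in $C(S^{n-1})$ with respect to the sup norm (obtained, for instance, by spherical convolution with a smooth non-negative approximate identity on $SO(n)$ applied to continuous functions, or by Stone--Weierstrass). The estimate above lets $f$ extend uniquely to a bounded linear functional $\tilde f$ on $C(S^{n-1})$, with $\|\tilde f\|\le c$. To check that $\tilde f$ remains positive on $C(S^{n-1})$, I would approximate any non-negative $\vp\in C(S^{n-1})$ uniformly by a sequence of non-negative $\vp_j\in\D(S^{n-1})$, obtained by mollification with a smooth non-negative rotation-invariant kernel; such mollification preserves non-negativity and converges uniformly because $\vp$ is uniformly continuous on the compact sphere. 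Passing to the limit gives $\tilde f(\vp)=\lim(f,\vp_j)\ge 0$.

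Finally, by the Riesz representation theorem applied to the positive bounded linear functional $\tilde f$ on $C(S^{n-1})$, there is a unique $\mu\in\M_+(S^{n-1})$ with $\tilde f(\vp)=\int_{S^{n-1}}\vp\,d\mu$ for all $\vp\in C(S^{n-1})$; restricting to $\D(S^{n-1})$ gives the desired representation of $f$. The only step that requires any care is the construction of the non-negative smooth approximants used to transfer positivity from $\D$ to $C$, and that is handled by the standard spherical mollification argument.
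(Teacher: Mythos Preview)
Your proof is correct and follows essentially the same approach as the paper: derive the sup-norm bound $|(f,\vp)|\le (f,1)\,\|\vp\|_{C}$ from positivity applied to $\|\vp\|_\infty\pm\vp$, extend by density to $C(S^{n-1})$, and invoke the Riesz representation theorem, with non-negativity transferred via smooth non-negative approximants (the paper uses Poisson integrals where you use spherical mollification). The only cosmetic difference is that you verify positivity of the extended functional before applying Riesz, while the paper applies Riesz first and then checks that the resulting measure is non-negative.
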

\begin{proof} This statement is  known, however, we  could not find precise
reference and decided to give a  proof for convenience of the
reader. The ``if" part is obvious. To prove the `` only if" part, we
write a test function $\vp\in  \D(S^{n-1})$ as a sum  $\vp= \vp_1 +
i \vp_2$, where $\vp_1=Re \, \vp, \; \vp_2=Im \,  \vp$. Since
$-||\vp||_{C(S^{n-1})} \le \vp_j\le ||\vp||_{C(S^{n-1})}$, $j=1,2$,
and $f$ is positive, then
$$-(f,1)\,||\vp||_{C(S^{n-1})} \le (f,\vp_j)\le (f,1)\,||\vp||_{C(S^{n-1})},$$
and therefore, $|(f,\vp)|\le |(f,\vp_1)|+|(f,\vp_2)|\le
2(f,1)\,||\vp||_{C(S^{n-1})}$. Since $\D(S^{n-1})$ is dense in
$C(S^{n-1})$, then  $f$ extends as a linear continuous functional
$\tilde f$ on $C(S^{n-1})$ and, by the Riesz theorem, there is a
measure $\mu$ on $S^{n-1}$ such that $(\tilde
f,\om)=\int_{S^{n-1}}\om (\theta) d\mu (\theta)$ for every $\om \in
C(S^{n-1})$. In particular, $(f,\vp) =(\tilde
f,\vp)=\int_{S^{n-1}}\vp (\theta) d\mu (\theta)$ for every $\vp \in
\D(S^{n-1})$. By taking into account that every non-negative
function  $\om \in C(S^{n-1})$ can be uniformly  approximated by
non-negative functions $\vp_k \in \D(S^{n-1})$ (for instance, by
Poisson integrals of $\om$), we get
$$
\int_{S^{n-1}}\om (\theta)d\mu (\theta)=\lim\limits_{k\to
\infty}\int_{S^{n-1}}\vp_k(\theta)d\mu (\theta)=\lim\limits_{k\to
\infty}(f,\vp_k) \ge 0.$$ The latter means that $\mu$ is non-negative.
\end{proof}

\end{document}